\newcommand{\rrvert}{\vert}
\newcommand{\llvert}{\vert}
\newcommand{\eqref}[1]{(\ref{#1})}
\newtheorem{theorem}{Theorem}[section]
\newtheorem{lemma}[theorem]{Lemma}
\newtheorem{problem}[theorem]{Problem}
\newtheorem{conjecture}[theorem]{Conjecture}
\def\prt{\partial}
\def\eps{\varepsilon}
\def\R{\mathbb{R}}
\def\Z{\mathbb{Z}}
\def\P{\mathbb{P}}
\def\E{\mathbb{E}}
\def\bbT{\mathbb{T}}
\def\bn{\mathbf{n}}
\def\bv{\mathbf{v}}
\def\bw{\mathbf{w}}
\def\bz{\mathbf{z}}
\def\bQ{\mathbf{Q}}
\def\T{\mathcal{T}}
\def\B{\mathcal{B}}
\def\C{\mathcal{C}}
\def\EE{\mathcal{E}}
\def\S{\mathcal{S}}
\def\I{\mathcal{I}}
\def\bDelta{\bolds{\Delta}}
\def\A{{\mathcal A}}
\def\tngt{\T}
\def\torus{\bbT}
\def\sh{\S}
\newcommand{\<}{\langle}
\renewcommand{\>}{\rangle}
\def\F{\mathcal{F}}
\def\G{\mathcal{G}}
\def\n{\mathbf{n}}
\def\ol{\overline}
\def\bone{\mathbf{1}}
\def\wh{\widehat}
\def\wt{\widetilde}
\newcommand{\dist}{\mathrm{dist}}
\begin{document}
\begin{frontmatter}

\title{Brownian earthworm\thanksref{T1}}
\runtitle{Brownian earthworm}
\thankstext{T1}{Supported in part by NSF
Grants DMS-09-06743, DMS-10-07563, DMR-10-35196
and by Grant N N201 397137, MNiSW, Poland.}

\begin{aug}
\author[A]{\fnms{Krzysztof} \snm{Burdzy}\corref{}\ead[label=e1]{burdzy@math.washington.edu}},
\author[A]{\fnms{Zhen-Qing} \snm{Chen}\ead[label=e2]{zchen@math.washington.edu}}
\and
\author[A]{\fnms{Soumik} \snm{Pal}\ead[label=e3]{soumik@math.washington.edu}}
\runauthor{K. Burdzy, Z.-Q. Chen and S. Pal}
\affiliation{University of Washington}
\address[A]{Department of Mathematics\\
University of Washington\\
Box 354350\\
Seattle, Washington 98195\\
USA\\
\printead{e1}\\
\hphantom{E-mail:\ }\printead*{e2}\\
\hphantom{E-mail:\ }\printead*{e3}} 
\end{aug}

\received{\smonth{9} \syear{2011}}
\revised{\smonth{12} \syear{2012}}

%
\begin{abstract}
We prove that the distance between two reflected Brownian
motions, driven by the same white noise, outside a sphere in a
3-dimensional flat torus does not
converge to 0, a.s., if the radius of the sphere is
sufficiently small, relative to the size of the torus.
\end{abstract}

%
\begin{keyword}[class=AMS]
\kwd{60J65}
\end{keyword}
\begin{keyword}
\kwd{Reflected Brownian motion}
\end{keyword}

\end{frontmatter}

\section{Introduction}\label{secintro}

This article is partly motivated by a natural phenome\-non. We
would like to analyze the effect of a randomly moving earthworm
on the soil. The soil is pushed aside by the earthworm. What is
the asymptotic distribution of soil particles when time goes to
infinity? Is the soil compacted, or are soil particles more or
less evenly spread over the region, especially when the
earthworm is small compared to the size of the region?
The answer seems to depend on the shape of the earthworm;
for example, we believe that
the soil is compacted if the ``earthworm'' is cubical.
In our toy model, the earthworm is represented by a sphere following a
Brownian path. We conjecture that in this model, the soil particles
will be more or less evenly spread over the region.
Our rigorous results in this paper
partly justify these heuristic claims.
We will next state the model in rigorous terms
and then present a theorem and some conjectures. We will also
briefly review related results.
The earthworm picture will be mathematically interpreted
after Conjecture~\ref{prj182}.

Let $\torus_1$ be the flat $d$-dimensional torus with side length
2, that is, $\torus_1$ is the cube $\{(x_1,\ldots,x_d) \in\R^d\dvtx |x_k| \leq1 \mbox{ for } k=1,\ldots,d\}$,
with the opposite sides
identified in the usual way. Let $\B(x,r)$ denote the open ball
with center $x$ and radius $r$.
For $0<r<1$, let $D = \torus_1 \setminus\ol{\B(0,r)}$.
Let $\n(x)$ denote the unit inward normal vector
at $x\in\prt D = \prt\B(0,r)$.
Let $B$ be a standard
$d$-dimensional Brownian motion, $x_0, y_0 \in\ol D$, $x_0 \ne
y_0$ and consider the following Skorokhod equations:
%
%
\begin{eqnarray}
\label{eqj131} X_t &=& x_0 + B_t + \int
_0^t \n(X_s)
\,dL^X_s \qquad\mbox{for } t\geq0,
\\
Y_t &=& y_0 + B_t + \int
_0^t \n(Y_s)
\,dL^Y_s \qquad\mbox{for } t\geq0.\label{eqj132}
\end{eqnarray}
Here $L^X$ is the local time of $X$ on $\prt D$. In other
words, $L^X$ is a nondecreasing continuous process which does
not increase when $X$ is in $D$, that is, $\int_0^\infty
\bone_{D}(X_t) \,dL^X_t = 0$, a.s. Equation \eqref{eqj131} has
a unique pathwise solution $(X,L^X)$ such that $X_t \in\ol D$
for all $t\geq0$; see \cite{LS}. The reflected Brownian
motion $X$ is a strong Markov process. The same remarks apply
to \eqref{eqj132}, so $(X, Y)$ is also strong Markov. Note
that on
any time
interval $(s,t)$ such that $X_u \in D$ and $Y_u \in
D$ for all $u \in(s,t)$, we have $X_u - Y_u = X_s - Y_s$ for
all $u \in(s,t)$.

For $x, y\in\torus_1$, we use $\dist(x, y)$ to denote the
geodesic distance between $x$ and~$y$ in the torus $\torus_1$.

%
\begin{theorem}\label{eqj135}
When the dimension $d=3$,
there is
$r_0>0$ such that for every $r\leq r_0$ and every
$x_0\ne y_0$, we have $\limsup_{t\to
\infty} \dist(X_t,Y_t) >0$, a.s.
\end{theorem}

An analogous problem was considered in \cite{BCJ} for
planar domains $D$.
It was proved that if $D$ is a bounded domain
with a smooth boundary and at most one hole, then $\lim_{t\to
\infty} \dist(X_t,Y_t) = 0$, a.s. It is not known whether there exists
a two-dimensional domain $D$ such that we have
$\limsup_{t\to\infty}\dist(X_t,Y_t)
>0$ with positive probability.

Note that by the pathwise uniqueness of the solutions to
\eqref{eqj131}--\eqref{eqj132}, $0$ is an absorbing state
for the distance process $\dist(X_t, Y_t)$; that is,
if $\dist(X_{t_0}, Y_{t_0})=0$, then $\dist(X_t, Y_t)=0$
for all $t\geq t_0$. Theorem \ref{eqj135} says that
$\dist(X_t, Y_t)$ never enters the absorbing state $0$
nor converges to 0 as $t\to\infty$. Since $D$ is compact,
this suggests that $\dist(X_t, Y_t)$ fluctuates and is a
``recurrent'' process. We suspect
that $(X_t,Y_t)$ has a
stationary probability distribution
but this does not follow from
recurrence alone. Hence, we propose the following

%
\begin{conjecture}\label{prd71}
When the dimension $d=3$,
there is
$r_0>0$ such that for $r\leq r_0$
the process $(X,Y)$ has a stationary
distribution $Q$ which does not charge the diagonal
$\{(x,x)\dvtx x\in\ol D \}$.
There is only one stationary
distribution for $(X,Y)$ which does not charge the diagonal.
\end{conjecture}

Since \eqref{eqj131}--\eqref{eqj132} have a unique pathwise
solution, if $x_0=y_0$, then $X_t = Y_t$ for all $t\geq0$, a.s.
It follows that $(X,Y)$ has a unique stationary distribution
$Q'$ supported on the diagonal, characterized by the fact that
the distribution of $X$ under $Q'$ is uniform in $D$.\eject

Our state space $D$ for reflected Brownian motion is a subset
of a torus because three-dimensional Brownian motion is
transient so the result analogous to Theorem \ref{eqj135} for
the complement of a ball in $\R^3$ is not interesting.
Moreover, the boundary of $D$ has no other component besides
$\prt\B(0,r)$ so the relative position of
$X$ and $Y$ is determined solely by the interaction
of the processes with $\prt\B(0,r)$.

%
\begin{problem}\label{pro51}
Is Theorem \ref{eqj135} valid when the dimension $d=2$?
\end{problem}

The reader may find it paradoxical that we can prove Theorem
\ref{eqj135} in 3 dimensions, but the analogous result in 2
dimensions is stated as an open problem. The reason is that the
proof depends in a crucial way on the sign of a certain
``Lyapunov exponent'' $\lambda^*_\rho=1+\lambda_\rho$ where
$\rho:=1/r$ and
$\lambda_\rho$ is defined in Theorem~\ref{tho53}(ii)
relative to the domain $D$. We prove in Lemma \ref{lemj271}
that $\lambda^*_\rho$ is positive for $D$ if $d=3$ and $\rho$
is large. In the 2-dimensional case, the analogous exponent is
equal to 0 \cite{BCJ}, Proposition~2.3, and this critical value
makes the problem harder.
We could have defined the domain $D$ as
$\torus_1 \setminus A$, with $A$ being not necessarily a ball. It is
easy to see that for many sets $A$, for example, those that are
bounded, smooth and close to a polyhedron, $\lambda^*$ is
negative. It was shown in \cite{BCJ} that in 2-dimensional
space, negative $\lambda^*$ implies that
$\lim_{t\to\infty}\dist(X_t,Y_t) =0$, a.s. In such a case,
$(X,Y)$ does not have a stationary distribution with some mass
outside the diagonal. It is not known whether there is a
2-dimensional domain, bounded or unbounded, with positive
$\lambda^*$.
This is related to another open problem that we have already
mentioned---it is not known whether there exists a two-dimensional
domain $D$ such that $\limsup_{t\to\infty}\dist(X_t,Y_t)>0$ with
positive probability. Theorem~\ref{eqj135} shows that this is
the case for a subset of a three-dimensional torus. We believe
that the theorem also holds in some bounded subsets of $\R^3$,
but we will not provide a rigorous proof. We make this claim
more precise in the following conjecture.

%
\begin{conjecture}
Suppose that $\B(x_j, r) \subset\B(0,1)$ for $j=1,\ldots, k$,
and let $D_1=\B(0, 1) \setminus\bigcup_{j=1}^k \ol{\B(x_j, r)}
\subset\R^3$. If $k$ is sufficiently large and\break
$ (\min_{1\leq j\leq k}( 1-|x_j|) +\min_{1\leq i<j\leq k}
|x_i-x_j| )/r$
is sufficiently
large,
then Theorem \ref{eqj135} holds for~$D_1$.
\end{conjecture}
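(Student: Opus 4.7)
The plan is to adapt the Lyapunov-function approach that proves Theorem~\ref{eq:j13.5} to the bounded domain $D_2\subset \R^3$. The essential input is Lemma~\ref{lem:j27.1}, which shows that the exponent $\lambda^*_\rho$ associated with reflection off a single small ball is strictly positive in dimension three when $\rho=1/r$ is large. In $D_2$ the boundary splits into two parts of opposite character: the $k$ concave inner spheres $\partial\B(x_j, r)$, reflection off which tends to spread $X$ and $Y$ apart, and the convex outer sphere $\partial \B(0,1)$, whose contribution to the drift of $\log\dist(X_t,Y_t)$ is bounded but potentially of the wrong sign. The conjecture amounts to the claim that, when the $k$ inner balls are numerous and well separated relative to $r$, the positive inner contribution dominates the fixed outer contribution, giving a strictly positive Lyapunov exponent for $\dist(X_t,Y_t)$ as in the torus.

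The first step is a localization argument. Fix an inner ball $\B(x_j,r)$ and consider a neighborhood of it of radius $R_j$ comparable to the minimum of its distances to the other $\B(x_i,r)$ and to $\partial\B(0,1)$; by hypothesis $R_j/r$ is large. Inside this neighborhood $D_2$ agrees with $\R^3 \setminus \ol{\B(x_j, r)}$, so the computation that establishes Lemma~\ref{lem:j27.1} applies locally and yields an expected positive drift of $\log\dist(X_t,Y_t)$ per unit of local time on $\partial\B(x_j, r)$ of size bounded below by a constant multiple of $\lambda^*_\rho$. An analogous analysis near the outer sphere, localized to a neighborhood disjoint from the inner balls, bounds the drift per unit of outer local time by an absolute constant $c_0$ that depends only on the geometry of $\partial\B(0,1)$.

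The second step is an occupation-time estimate. Since the stationary distribution of the single reflected Brownian motion in $D_2$ is uniform, the long-run rates at which local time accumulates on the inner and outer boundary components are proportional to their surface areas: $4\pi r^2$ per inner sphere and $4\pi$ for the outer sphere, giving an aggregate inner rate of order $kr^2$. Combined with the per-unit-local-time bounds from the previous step, the net drift of $\log \dist(X_t,Y_t)$ over a long time interval $[0,T]$ is bounded below by $(c_1 k r^2 \lambda^*_\rho - c_2) T$ for some constants $c_1,c_2>0$. Choosing $k$ large enough to make this positive and then running a Foster--Lyapunov recurrence argument of the kind used for Theorem~\ref{eq:j13.5} rules out $\dist(X_t,Y_t)\to 0$.

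The main obstacle is making the localization rigorous. One has to show that the error in the local single-ball drift computation introduced by the nearby outer sphere and by the other inner balls is negligible; the large-separation hypothesis is exactly what makes such error estimates feasible. A related subtlety arises when $X$ and $Y$ are near different inner balls simultaneously, in which case no single-ball computation applies directly. However, the identity $X_u-Y_u = X_s-Y_s$ on any interval during which both processes stay in the interior of $D_2$ implies that transitions between neighborhoods of different inner balls do not by themselves change $\log\dist(X_t,Y_t)$, so the drift is accumulated only during visits to individual boundary components and is captured by the single-ball contributions described above.
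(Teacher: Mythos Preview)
The statement you are trying to prove is a \emph{conjecture} in the paper, not a theorem: the authors state explicitly that ``we believe that the theorem also holds in some bounded subsets of $\R^3$ but we will not provide a rigorous proof,'' and then record the conjecture without any argument. There is therefore no proof in the paper to compare your proposal against.

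As a heuristic outline your proposal captures the intended mechanism: the concave inner spheres have shape operator $+I/r$ and contribute the positive exponent $1+\lambda_\rho$ per unit local time (after rescaling), while the convex outer sphere has shape operator $-I$ and contributes a bounded negative drift; if the inner contribution can be made to dominate, a Foster--Lyapunov argument as in Section~\ref{sec:rec} should go through. However, several steps are substantially harder than you suggest. First, the occupation-time balance $c_1 k r^2 \lambda^*_\rho - c_2$ is an expectation-level statement, whereas the actual proof of Theorem~\ref{eq:j13.5} (Lemmas~\ref{lem:o30.3}--\ref{lem:n11.3}) requires distributional control of $V_1-V_0$ with uniform tail bounds, not just a positive mean; extracting this from a comparison of surface areas is not automatic. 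Second, your localization step assumes the excursion-law computation near $\partial\B(x_j,r)$ is unaffected by the rest of the boundary, but excursions from an inner sphere can hit the outer sphere or another inner sphere before returning, and bounding the resulting correction uniformly in the configuration is exactly the kind of estimate the authors declined to carry out. Third, when $X$ and $Y$ are simultaneously near different boundary components (e.g.\ $X$ near $\partial\B(x_i,r)$ and $Y$ near $\partial\B(0,1)$), both local times increase and neither single-component analysis applies; your remark that $X_u-Y_u$ is constant on boundary-free intervals does not cover this case. These are the obstacles that keep the statement a conjecture.
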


Suppose that Conjecture \ref{prd71} is true, that is, for some
$r_0>0$ and all $r\leq r_0$,
the process $(X,Y)$ has a stationary
distribution $Q$ which does not charge the diagonal. This stationary
measure $Q$ depends on
$r$, the radius of the ball deleted from the torus $\torus_1$,
so we can write $Q_{r}$ to emphasize this dependence.

%
\begin{conjecture}\label{prj181}
The measures $Q_{r}$ converge to the uniform probability
distribution on $(\torus_1)^2$ as $r\to0$.
\end{conjecture}

Next, we consider the flow $X^x_t$ of reflected Brownian
motions, defined for $x\in\ol D$ by
%
%
\begin{equation}
\label{eqj133} X^x_t = x + B_t + \int
_0^t \n\bigl(X^x_s
\bigr) \,dL^x_s\qquad \mbox{for } t\geq0.
\end{equation}
Here $L^x$ is the local time of $X^x$ on $\prt D$. Equation
\eqref{eqj133} have unique pathwise solutions $(X^x,L^x)$ for
all $x$ simultaneously because the construction of the solution
to the Skorokhod equation given in \cite{LS} is deterministic.
Let $|A|$ denote the Lebesgue measure of a set $A$
and $\bQ_{r,t}(A) = |\{x\in D\dvtx X^x_t \in A\}| $.
We note that $\bQ_{r,t}$ is a random measure. For the definitions
of a random measure and weak convergence of random measures, see,
for example, \cite{Daw}; we will not review these notions here as
they are not used in the core of our paper.

%
\begin{conjecture}\label{prj182}
The measures $\bQ_{r,t} $ converge to a random measure $\bQ_{r}$
on $\torus_1\setminus\B(0, r)$ when $ t \to\infty$, in the sense of
weak convergence of random
measures. Random measures $\bQ_{r} $ converge weakly to the
uniform measure on $\torus_1$ when $r \to0$, in probability.
\end{conjecture}

In the context of \eqref{eqj133}, the earthworm picture is
obtained by interpreting
$\B(0, r)-B_t$
as a Brownian earthworm and $X^x_t - B_t$ as the location of a
displaced soil
particle.

For an extensive review of related results, see \cite{B3}.
Some of those
results will be recalled in Section~2.4.
The present article is,
philosophically speaking, a mirror image of \cite{BCJ}. That
article analyzed domains where $\dist(X_t,Y_t)$ converged to~0,
while the present article analyzes domains where the opposite
is true. It was proved in \cite{CLJ1,CLJ2} that, under mild
technical assumptions on the domain, reflected Brownian motions
$X$ and $Y$ do not coalesce in a finite time. A series of
papers by Pilipenko~\cite{P1,P2,P3} discuss stochastic flows of
reflected processes. The article \cite{P4} is posted on Math
ArXiv; it is a review and discussion of Pilipenko's previously
published results.

We will now outline the idea of the proof of our main result, Theorem
\ref{eqj135}. When the distance between the two solutions to the
Skorokhod problem $X$ and $Y$ is small, it changes in two distinct
ways. It increases at a rate proportional to the local time spent by
the processes on $\prt D$, due to the fact that $\prt D$ is curved and,
therefore, the directions in which $X$ and $Y$ are pushed are slightly
different. The distance between the two processes has negative jumps at
the ends of excursions of $X$ and $Y$ from $\prt D$ because the
difference between the two processes is not (approximately) parallel to
$\prt D$ at the ends of excursions; hence the local time push has a
different effect on the two trajectories. A discrete version of these
ideas is expressed in a formal way in \eqref{defvr} below. The origin
of these ideas goes back at least to the paper by Airault \cite{Ai}.
The continuous rate of increase of the distance between $X$ and $Y$ is
greater than the combined effect of negative jumps over long periods of
time, on average, for the domain $D$---this is the main estimate of
this paper, derived in Section~\ref{secexponent}. The main body of the
paper is devoted to detailed arguments showing that all modes of
behavior of the two processes not captured by the above description but
theoretically possible (such as coupling of the two processes at a
finite time) have negligibly small probability.

The rest of the paper is organized as follows. Section~\ref{secprelim} is a review of known results needed in this
paper, including a review of excursion theory in Section~\ref{secexc}, some technical estimates from \cite{BL,B3} in
Section~\ref{secdiffrbm} and preliminary analysis of the coupling. The
paper is based in an essential way on the exact and
explicit evaluation of the Lyapunov
exponent $\lambda_\rho$. The calculation is presented in Section~\ref{secexponent}.
The proof of Theorem \ref{eqj135}
is given in Section~\ref{secrec}; it consists of several
lemmas.

\section{Preliminaries}
\label{secprelim}

\subsection{General}\label{secj16}

For a process $Z$, a set $A$ and a point $a$ in the state space of $Z$,
let $T^Z_A =
\inf\{t\geq0\dvtx Z_t \in A\}$, $T^Z_a = \inf\{t\geq0\dvtx Z_t =a \}$
and $\tau^Z_A = \inf\{t\geq0\dvtx Z_t \notin A\}$.
By the Brownian scaling, if $\{X_t; t\geq0\}$
is the reflecting Brownian motion
on $\torus_1\setminus\overline{\B(0, r)}$ driven by Brownian motion $B_t$,
then $\{ r^{-1}X_{r^2t}; t\geq0\}$ is the reflecting
Brownian motion
on $(r^{-1} \torus_1)\setminus\overline{\B(0, 1)}$
driven by Brownian motion $r^{-1}B_{r^2t}$.
For notational convenience, throughout the remaining
part of this paper, we fix $\rho=1/r>1$ and take $\torus_\rho$ to be
the flat $3$-dimensional torus with side length
$2\rho>2$, that is, $\torus_\rho$ is the cube $\{(x_1,x_2,x_3) \in
\R^3\dvtx |x_k| \leq\rho, k=1,2,3\}$,
with the opposites sides
identified in the usual way, and let $D = \torus_\rho\setminus\ol
{\B(0,1)}$.

\subsection{Linear structure in torus}

In Section~\ref{secintro}, we used notation normally reserved
for elements of linear spaces, such as vector sum (e.g., $X_s -
Y_s$) and norm (e.g., $|X_t - Y_t|$). We will now make this
convention precise. Note that the torus $\torus_\rho$ can be
represented as the quotient $(\R/ (2 \rho\Z) )^3$. For $x\in
\torus_\rho$, let $A_x$ denote the set of all points in $\R^3$ which
correspond to $x$. For $x,y \in\torus_\rho$, we choose $x_1 \in A_x$
and $y_1\in A_y$ with the minimal distance $|x_1-y_1|$ among
all such pairs. Then we let $x-y = x_1 -y_1$ and $\dist(x,y)= |x-y| = |x_1
-y_1|$.

\subsection{Review of excursion theory}\label{secexc}

This section contains a brief review of excursion theory needed
in this paper. See, for example, \cite{M} for the foundations of the
theory in the abstract setting and \cite{B2} for the special
case of excursions of Brownian motion. Although Burdzy \cite{B2} does
not discuss reflected Brownian motion, all results we need from
his book readily apply in the present context. We will use two
different, but closely related, ``exit systems.'' The first one,
presented below, is a simple exit system representing
excursions of a single reflected Brownian motion from $\prt D$.
The second exit system encodes the information about both processes
$X$ and $Y$, but it is essentially equivalent to the first exit system.
We will introduce and use the second exit system in step 2.3 of the
proof of Lemma \ref{lemn81}.
Our review applies to general
domains $D$ with smooth boundaries, but we will assume that $D$
is the torus with the unit ball removed,
as in Section~\ref{secj16}.

Let $\P^{x_0}$ denote the distribution of the process $X$
defined by \eqref{eqj131}, and let $\E^{x_0}$ be the
corresponding expectation. Let $\P^x_D$ denote the distribution
of Brownian motion starting from $x\in D$ and killed upon
exiting $D$.

An ``exit system'' for excursions of the reflected Brownian
motion $X$ from $\prt D$ is a pair $(L^*_t, H^x)$ consisting of
a positive continuous additive functional $L^*_t$ of $X$
and a family
of ``excursion laws'' $\{H^x\}_{x\in\prt D}$.
Let $\bDelta$ denote the ``cemetery''
point outside $\ol D$, and let $\C$ be the space of all
functions $f\dvtx [0,\infty) \to\ol D\cup\{\bDelta\}$ which are
continuous and take values in $\ol D$ on some interval
$[0,\zeta)$, and are equal to $\bDelta$ on $[\zeta,\infty)$.
For $x\in\prt D$, the excursion law $H^x$ is a $\sigma$-finite
(positive) measure on $\C$, such that the canonical process is
strong Markov on $(t_0,\infty)$, for every $t_0>0$, with the
transition probabilities
$\P^{^{\centerdot}}_D$.
Moreover, $H^x$ gives zero
mass to paths which do not start from $x$. We will be concerned
only with the ``standard'' excursion laws; see Definition 3.2
of \cite{B2}. For every $x\in\prt D$ there exists a unique
standard excursion law $H^x$ in $D$, up to a multiplicative
constant.

Excursions of $X$ from $\prt D$ will be denoted $e$ or $e_s$,
that is, if $s< u$, $X_s,X_u\in\prt D$, and $X_t \notin\prt D$
for $t\in(s,u)$, then $e_s = \{e_s(t) = X_{t+s},
t\in[0,u-s)\}$ and $\zeta(e_s) = u -s$. By convention, $e_s(t)
= \bDelta$ for $t\geq\zeta(e_s)$, so $e_t \equiv\bDelta$ if
$\inf\{s> t\dvtx X_s \in\prt D\} = t$.

Let $\sigma_t = \inf\{s\geq0\dvtx L^*_s \geq t\}$ and
$\EE_u = \{e_s\dvtx s < \sigma_u\}$.
Let $I$ be
the set of left endpoints of all connected components of $(0,
\infty)\setminus\{t\geq0\dvtx X_t\in\partial D\}$. The following
is a special case of the exit system formula of \cite{M}. For
every $x\in\ol D$, every bounded predictable process $V_t$ and
every universally measurable function $f\dvtx \C\to[0,\infty)$
that vanishes on
excursions $e_t$ identically equal to $\bDelta$, we have
%
%
\begin{eqnarray}
\label{exitsyst} \E^x \biggl[ \sum_{t\in I}
V_t \cdot f ( e_t) \biggr] &=& \E^x \int
_0^\infty V_{\sigma_s} H^{X(\sigma_s)}(f)
\,ds
\nonumber
\\[-8pt]
\\[-8pt]
\nonumber
 &=& \E^x \int_0^\infty
V_t H^{X_t}(f) \,dL^*_t.
\end{eqnarray}
Here and
elsewhere $H^x(f) = \int_\C f \,dH^x$. Intuitively speaking,
\eqref{exitsyst} says that the right continuous version
$\EE_{t+}$ of the process of excursions is a Poisson
point process on the local time scale with variable intensity
$H^{^{\centerdot}}(f)$.

The normalization of the exit system is somewhat arbitrary. For
example, if $(L^*_t, H^x)$ is an exit system, and
$c\in(0,\infty)$ is a constant, then $(cL^*_t, (1/c)H^x)$ is
also an exit system. One can even make $c$ dependent on
$x\in\prt D$. Theorem 7.2 of \cite{B2} shows how to choose a
``canonical'' exit system; that theorem is stated for the usual
planar Brownian motion, but it is easy to check that both the
statement and the proof apply to the reflected Brownian motion.
According to that result, we can take $L^*_t$ to be the
continuous additive functional whose Revuz measure is a
constant multiple of the surface area measure $dx$
on $\prt D$ and
$H^x$'s to be standard excursion laws normalized so that
%
%
\begin{equation}
\label{eqM52} H^x (A) = \lim_{\delta\downarrow0} \frac1 \delta
\P_D^{x + \delta\n(x)} (A)
\end{equation}
for any event $A$ in a $\sigma$-field generated by the process
on an interval $[t_0,\infty)$, for any $t_0>0$. The Revuz
measure of $L^X$ is the measure $dx/(2|D|)$ on $\prt D$, that is,
if the initial distribution of $X$ is the uniform probability
measure $\mu$ on $D$,
then $\E^\mu\int_0^1 \bone_A (X_s) \,dL^X_s
= \int_A dx/(2|D|)$ for any Borel set $A\subset\prt D$. It has
been shown in \cite{BCJ} that $L^*_t=L^X_t$.

\subsection{Differentiability of stochastic flow of reflected
Brownian motions}\label{secdiffrbm}

It was proved in \cite{A,B3,P4}, in somewhat different
settings, that the stochastic flow of reflected Brownian
motions is differentiable in the initial condition. We will use
this result, and we will also need a key estimate from \cite{B3}
that was partly developed in~\cite{BL}. First, we will recall
some notation from \cite{B3}. The notation may seem somewhat
awkward in the present context because it was developed for
complicated arguments. We leave most of this notation unchanged
to help the reader consult the results in \cite{B3}.

We consider $ \prt D$ to be a smooth, properly embedded,
orientable hypersurface (i.e., submanifold of codimension $1$)
in $\R^3$, endowed with a smooth unit normal inward vector
field $\n$. We consider $\prt D$ as a Riemannian manifold with
the induced metric. We use the notation $\<\cdot,\cdot\>$ for
both the Euclidean inner product on $\R^3$ and its restriction
to the tangent space $\tngt_x\prt D$ for any $x\in\prt D$,
and $|\cdot|$ for the associated norm. For any $x\in\prt D$,
let $\pi_x\colon\R^{3}\to\tngt_x \prt D$ denote the
orthogonal projection onto the tangent space $\tngt_x \prt D$,
so $\pi_x \bz= \bz- \<\bz,\n(x)\>\n(x)$, and let $\sh
(x)\colon\tngt_x\prt D\to\tngt_x\,\prt D$ denote the shape
operator (also known as the Weingarten map), which is the
symmetric linear endomorphism of $\tngt_x\prt D$ associated
with the second fundamental form. It is characterized by $\sh
(x) \bv= - \partial_\bv\n(x)$ for $ \bv\in\tngt_x\prt
D$, where $\partial_\bv$ denotes the ordinary Euclidean
directional derivative in the direction of~$\bv$.

Recall that $\bDelta$ is an extra ``cemetery point'' outside
$\ol D$, so that we can send processes killed at a finite time
to $\bDelta$. For $s\geq0$ such that $X_s \in\prt D$ we let
$\zeta(e_s) = \inf\{t>0\dvtx X_{s+t} \in\prt D\}$. Here $e_s$ is
an excursion starting at time $s$, that is, $e_s = \{e_s(t) =
X_{t+s}, t\in[0,\zeta(e_s))\}$. We let $e_s(t) = \bDelta$
for $t\geq\zeta(e_s)$, so $e_t \equiv\bDelta$ if $\zeta(e_s)
=0$.

Let $\sigma^X_t$ be the inverse of local time $L^X_t$, that is,
$\sigma^X_t = \inf\{s \geq0\dvtx L^X_s \geq t\}$, and $\EE_b =
\{e_s\dvtx s < \sigma^X_b\}$. For $b,\eps>0$, let $\{e_{u_1},
e_{u_2}, \ldots, e_{u_m}\}$ be the set of all excursions $e\in
\EE_b$ with
$|e(0) -e(\zeta-)| \geq\eps$.
We assume that
excursions are labeled so that $u_k < u_{k+1}$ for all $k$, and
we let $\ell_k = L^X_{u_k}$ for $k=1,\ldots, m$. We also let
$u_0 =\inf\{t\geq0\dvtx X_t \in\prt D\}$, $\ell_0 =0 $,
$\ell_{m+1} = b$ and $\Delta\ell_k = \ell_{k+1} - \ell_k$. Let
$x_k =
e_{u_k} (\zeta-)$
be the right endpoint of excursion
$e_{u_k}$ for $k=1,\ldots, m$ and $x_0=X_{u_0}$.

For $\bv_0\in\R^3$, let
%
%
\begin{equation}
\label{defvr} \qquad\bv_b = \exp\bigl(\Delta\ell_m
\sh(x_m)\bigr) \pi_{x_m} \cdots \exp\bigl(\Delta
\ell_1 \sh(x_1)\bigr) \pi_{x_1} \exp\bigl(
\Delta\ell_0 \sh(x_0)\bigr) \pi_{x_0}
\bv_0.
\end{equation}
Note that all concepts based on excursions $e_{u_k}$ depend
implicitly on $\eps>0$, which is often suppressed in the
notation. Let $\A_b^\eps$ denote the linear mapping $\bv_0 \to
\bv_b$.

It was proved in Theorem 3.2 in \cite{BL} that for every $b>0$,
a.s., the limit $\A_b:= \lim_{\eps\to0} \A_b^\eps$ exists and
it is a linear mapping of rank $2$. For any $\bv_0$, with
probability~1, $\A_b^\eps\bv_0\to\A_b \bv_0$ as $\eps\to0$,
uniformly in $b$ on compact sets.

Recall the stochastic flow $X^x_t$ of reflected Brownian
motions defined in \eqref{eqj133}. By Theorem 3.1 of
\cite{B3}, for every $x\in D$, $b>0$ and compact set $K \subset
\R^3$, we have a.s.,
%
%
\begin{equation}
\label{n61} \lim_{\eps\to0} \sup_{\bv\in K} \bigl
\llvert \bigl(X^{x +
\eps\bv} _{\sigma^x_b} - X^{x}_{\sigma^x_b}
\bigr)/\eps- \A_{b} \bv \bigr\rrvert =0,
\end{equation}
where $\sigma^x_b = \inf\{t\geq0\dvtx L^{X^x}_t \geq b\}$.
Informally speaking, the last formula says that
$y \to X^y_{\sigma^x_b}$ is differentiable, that is,
the stochastic flow $X$ is differentiable in the space variable.
Formula \eqref{defvr} represents
a discrete approximation to the derivative~$\A_b$.
According to that formula, the approximation to the derivative is a
composition of two types of linear mappings.
After the $k$-th excursion, the projection on the tangent plane to
$\prt D$
at the endpoint of the $k$th excursion is added to the composition.
Between excursions, the
derivative expands or contracts (in the sense of the exponential function
of a linear mapping) at the rate proportional to the
curvature of $\prt D$ at the point where the most recent excursion
ended.

Consider some $b>0$, and let $\sigma_* = \inf\{t\geq0\dvtx L^X_t
\lor L^Y_t \geq b\}$. Thus defined $\sigma_*$ is different from
the random variable denoted by the same symbol
in \cite{B3}.
Article \cite{B3} is concerned with a stochastic flow,
and $\sigma_*$ denotes in that paper, roughly speaking, the time when
at least one of the local times corresponding to reflected
Brownian motions in the flow exceeds a certain level. The
results and arguments given in \cite{B3}
can be applied in our paper with our definition of $\sigma_*$
because we are concerned only with two reflected Brownian
motions $X$ and $Y$.

For $\eps_* >0$, let
%
%
\begin{equation}
\label{eqd111} \{e_{t^*_1}, e_{t^*_2}, \ldots, e_{t^*_{m^*}}
\} =\bigl\{e_t\in\EE_b\dvtx \bigl|e_t(0) -
e_t(\zeta-)\bigr| \geq\eps_*, t < \sigma_*\bigr\}.
\end{equation}
These excursions are labeled so that $t^*_k < t^*_{k+1}$ for all
$k$. We
let $\ell^*_k = L^X_{t^*_k}$ for $k=1,\ldots, m^*$.
We also let $t^*_0 = \inf\{t\geq0\dvtx X_t \in\prt D\}$,
$\ell^*_0 = 0 $, $\ell^*_{m^*+1} = L^X_{\sigma_*}$ and $\Delta
\ell^*_k = \ell^*_{k+1} - \ell^*_k$. Let $x^*_k =
e_{t^*_k}(\zeta-)$ for $k=1,\ldots, m^*$, and $x^*_0=X_{t^*_0}$.
Let
\[
\I_k = \exp\bigl(\Delta\ell^*_k \sh\bigl(x^*_k
\bigr)\bigr) \pi_{x^*_k}.
\]

The arguments in \cite{B3} were given only for $b=1$, but it is
easy to see that they apply equally to any fixed value of
$b>0$.

Let $\P^{x_0, y_0}$ denote the distribution of the solution
$(X,Y)$ to \eqref{eqj131}--\eqref{eqj132}, and let $\E^{x_0,
y_0}$ denote the corresponding expectation.

Fix an arbitrarily small $c_3>0$. By (3.161) and (3.167) of
\cite{B3}, there exist $c_4,c_5,c_6,\eps_0>0$, $\beta_1 \in
(1,4/3)$ and $\beta_2 \in(0, 4/3 - \beta_1)$ such that if
$X_0=x$, $Y_0=y$, $|x-y| =\eps< \eps_0$ and $\eps_* = c_4
\eps$, then
%
%
\begin{equation}
\label{EM311}\bigl |(Y_{\sigma_*} - X_{\sigma_*}) - \I_{m^*} \circ
\cdots\circ\I_0 (Y_{0} - X_{0})\bigr| \leq|
\Lambda| + \Xi,
\end{equation}
where $|\Lambda| < c_3 \eps$, $\P^{x,y}$-a.s., and
%
%
\begin{equation}
\label{M316} \P^{x,y}\bigl(|\Xi| > c_5 \eps^{\beta_1}
\bigr) \leq c_6 \eps^{\beta_2}.
\end{equation}
The meaning of $\Lambda$ and $\Xi$ is not important in the present
paper. These random variables arise in the decomposition of the
difference on the left-hand side of \eqref{EM311}. The random variable
$\Lambda$ is ``large'' because it is bounded by a constant multiple
of~$\eps$ to power 1; on the positive side, this bound is
deterministic. The random
variable $\Xi$ is ``small'' because it is (typically) smaller than
$\eps
^{\beta_1}$
with $\beta_1>1$, but this bound does not hold with
probability 1.

\subsection{Some path properties of couplings}
If no confusion may
arise, $x_0$ and $y_0$ will be suppressed in the notation $\P^{x_0, y_0}$,
$\E^{x_0, y_0}$ and $\P^{x_0, y_0}$-a.s.,
and we will use the notation ``$\P$,'' ``$\E$'' and ``a.s.''

The next lemma says that if the two processes $X$ and $Y$ are close to
each other and almost parallel to $\prt D$ then they will stay almost
parallel to $\prt D$ as long as they do not move far away from the
current position.
The proof is based on an idea that will be used several times in this
article; see steps 2.1, 2.2, 2.4 and 2.6 of the proof of Lemma \ref
{lemn81}. The argument is concerned with an interval where only one
of the processes can have some local time push. The analysis of the
relative positions of the two processes at the beginning and the end of
the interval, and the direction of the local time push, leads to a
(desired) contradiction. The idea is graphically illustrated in Figure~\ref{figearth1} below (step 2.2 of the proof of Lemma \ref{lemn81})
because that implementation yields the most convincing picture.

%
\begin{lemma}\label{lemn31}
Suppose that $x_1\in\prt D$, $c_1 \in(0,1/100)$, and let $D_1 =\break
\ol{D \cap\B(x_1, c_1/4)}$. Assume that $x_0, y_0 \in D_1$ and
$|\<x_0-y_0, \bn(x_1)\>| \leq c_1 |x_0-y_0| $. Let $T_1 =
\tau^X_{D_1} \land\tau^Y_{D_1}$. Suppose that $X$ and $Y$
solve \eqref{eqj131}--\eqref{eqj132} with $X_0 = x_0$ and
$Y_0=y_0$. Then a.s.,
$|\< X_t - Y_t, \bn(x_1)\>| \leq c_1 |X_t -
Y_t|$ for all $t\leq T_1$.
\end{lemma}

\begin{pf}
Observe that for
$x_2\in\prt D \cap D_1$ and $y_2 \in D_1$ we have
$\<x_2-y_2, \bn(x_1)\> \leq c_1 |x_2-y_2|/2 $.
Moreover, for
any $x_3\in
\prt D \cap D_1$, the angle between $\bn(x_1)$ and $\bn(x_3)$
is less than $c_1/2$ radians.

Assume that $|\< X_t - Y_t, \bn(x_1)\>| > c_1 |X_t - Y_t|$ for
some $t\leq T_1$. We will show that this assumption leads to a
contradiction. Let
\[
T_2 = \inf\bigl\{t\geq0\dvtx \bigl|\bigl\< X_t - Y_t,
\bn(x_1)\bigr\>\bigr| > c_1 |X_t - Y_t|
\bigr\}.
\]
By assumption and the pathwise uniqueness of solutions to
\eqref{eqj131}--\eqref{eqj132}, $T_2<T_1$ and $|X_{T_2}-Y_{T_2}|>0$.
We have $|\< X_{T_2} - Y_{T_2}, \bn(x_1)\>| = c_1 |X_{T_2} - Y_{T_2}|$
so at most one of the
points $X_{T_2}$ and $Y_{T_2}$ belongs to the boundary of $D$.
At least one of these points belongs to $\prt D$ because $t\to
|\< X_t - Y_t, \bn(x_1)\>| / |X_t - Y_t|$ is constant over
intervals where neither $X$ nor $Y$ visit $\prt D$. Suppose
without loss of generality that $X_{T_2}\in\prt D$. Then, by
the opening remarks, $\< X_{T_2} - Y_{T_2}, \bn(x_1)\> \leq
c_1 |X_{T_2} - Y_{T_2}|/2$, and therefore,
%
%
\begin{equation}
\label{en31} T_2= \inf\bigl\{t\geq0\dvtx \bigl\< X_t -
Y_t, \bn(x_1)\bigr\><- c_1 |X_t -
Y_t|\bigr\}.
\end{equation}
In particular,
$\< X_{T_2} - Y_{T_2}, \bn(x_1)\> = -c_1 |X_{T_2} - Y_{T_2}|$.
Let
\[
T_3= \inf\{s>T_2\dvtx Y_s \in\partial D\}
\wedge T_1.
\]
Then $T_2<T_3$ and $L^Y_{T_3} =L^Y_{T_2}$.
Hence, for $t\in[T_2, T_3]$,
we have
\begin{eqnarray*}
\bigl\<X_t-Y_t, \bn(x_1)\bigr\> &=&\biggl \<
X_{T_2} - Y_{T_2}+ \int^t_{T_2}
\bn(X_s) \,dL^X_s, \bn(x_1)
\biggr\>
\\
&\geq& -c_1 |X_{T_2} - Y_{T_2}|+c_1
\bigl(L^X_t-L^X_{T_2}\bigr)
\\
&\geq& -c_1 |X_{T_2} - Y_{T_2}|+c_1
\biggl|\int_{T_2}^t \bn(X_s)
\,dL^X_s \biggr|
\\
&\geq& -c_1 \biggl| X_{T_2}-Y_{T_2} +\int
_{T_2}^t \bn(X_s)
\,dL^X_s \biggr|
\\
& = & -c_1 |X_t-Y_t|,
\end{eqnarray*}
contradicting the definition of
$T_2$ in view of \eqref{en31}.
This completes the proof of the lemma.
\end{pf}

%
\begin{lemma}\label{lemo311}
$\!\!$If $x,y \in\ol D$ and $x\ne y$, then \mbox{$\P^{x,y}(X_t \ne Y_t,
\mbox{ for every } t\geq0) = 1$}.
\end{lemma}

\begin{pf}
The proof of the lemma consists of two main steps. The first step uses
a result on differentiability of the stochastic flow of reflected
Brownian motions. According to this result, under some assumptions, the
derivative of the stochastic flow is a nontrivial linear mapping.
Hence, different trajectories in the stochastic flow do not collide.
This argument applies directly only when the starting points of $X$ and
$Y$ are ``almost parallel'' to $\prt D$. The general case, presented in
step 2 below, is dealt with by reducing it to the first case at an
appropriate stopping time.

Assume that for some distinct $x,y \in\ol D$, $X_t = Y_t$ for
some $t<\infty$, with positive probability. A standard
application of the strong Markov property shows that there must exist
$r\in(0,1/200)$, $x_1 \in\prt D$ and $y_1 \in\ol D$ such
that if we write $D_1 = \ol D\cap\B(x_1, r/8)$ and $T_1 =
\tau^X_{D_1} \land\tau^Y_{D_1}$, then $\P^{x_1,y_1} (\exists
t\in[0,T_1]\dvtx X_t = Y_t)>0$. Note that necessarily $y_1 \in D_1$.

\textit{Step} 1.
Suppose that
$r\in(0,1/100)$, $x_1 \in\prt D$, $y_1 \in D_1$ and $x_1\ne y_1$.
In this step, we will consider the case when
$|\<x_1-y_1, \bn(x_1)\>| \leq(r/2) |x_1-y_1|$.

Let $K_{\delta} = (x_1 + \T_{x_1} \prt D) \cap\prt
\B({x_1}, \delta)$ and $K^0_{\delta} = \T_{x_1} \prt D \cap\prt
\B(0, \delta)$. Recall the stochastic flow $X^x_t$ of
reflected Brownian motions defined in \eqref{eqj133}, and note
that $(X_t,Y_t) = (X^{x_1}_t, X^{y_1}_t)$ under $\P^{x_1,y_1}$.
Let $\wh\sigma_b = \inf\{t\geq0\dvtx L^{X^{x_1}}_t \geq b\}$.
According to Theorem 3.2 of \cite{BL} and its proof, for any
fixed $b>0$, $\A_b$ has rank 2. In fact, the proof shows more
than that, namely, $\P^{x_1}$-a.s., $\inf_{\bv\in K^0_{\delta}}
|\A_b(\bv)|>0$. This and \eqref{n61} imply that for any $b>0$,
\[
\lim_{\delta\to0} \P^{x_1} \Bigl(\inf_{\bv\in K^0_\delta}
\bigl\llvert X^{{x_1} + \bv} _{\wh\sigma_b} - X^{{x_1}}_{\wh\sigma_b}
\bigr\rrvert /|\bv| > 0 \Bigr) =1.
\]
Since the stochastic differential equation \eqref{eqj131} has
a unique strong solution, if $X^x_t = X^y_t$ for some $t$, then
$X^x_s = X^y_s$ for all $s\geq t$, a.s. Hence, the last formula
can be strengthened as follows:
\[
\lim_{\delta\to0} \P^{x_1} \Bigl(\inf_{\bv\in K^0_\delta}
\inf_{0\leq t \leq\wh\sigma_b} \bigl\llvert X^{{x_1} + \bv} _{t} -
X^{{x_1}}_{t} \bigr\rrvert /|\bv| > 0 \Bigr) =1.
\]
For every $k\geq1$ find $\delta_k>0$ such that
%
%
\begin{equation}
\label{eqd107} \P^{x_1} \Bigl(\inf_{\bv\in K^0_{\delta_k}} \inf
_{0\leq t \leq\wh\sigma_b} \bigl\llvert X^{{x_1} + \bv} _{t} -
X^{{x_1}}_{t} \bigr\rrvert /|\bv| > 0 \Bigr)
\geq1-2^{-k}.
\end{equation}

It follows from Lemmas 3.3 and 3.4 of \cite{B3} and their
proofs that there exist stopping times $S_k$ such that $S_k\to
\infty$ as $k\to\infty$, and $|X^x_{t} - X^y_{t}| \leq k
|X^x_0 - X^y_0|$ for al $x, y \in\ol D$ and $t\in[0, S_k]$,
a.s. We can assume without loss of generality that $\delta_k\to
0$ as $k\to\infty$. We make $\delta_k>0$ smaller, if
necessary, so that $|X^{x_1}_{t} - X^z_{t}| \leq r/8$, for all $k\geq1$,
$z\in K_{\delta_k}$ and $t\in[0, S_k]$, a.s. By passing to a
subsequence, if necessary, we may assume that
%
%
\begin{equation}
\label{eqd106} \P(S_k > \wh\sigma_b) \geq1 -
2^{-k}.
\end{equation}
If we let $T_2 = T_1 \land\wh\sigma_b$,
\begin{eqnarray*}
F^1_k &=& \bigl\{\bigl|X^{x_1}_{t} -
X^z_{t}\bigr| \leq r/8, \forall z\in K_{\delta_k}, t
\in[0, T_2]\bigr\},
\\
F^2_k &=& \Bigl\{\inf_{\bv\in K^0_{\delta_k}} \inf
_{0\leq t \leq T_2} \bigl\llvert X^{{x_1} + \bv} _{t} -
X^{{x_1}}_{t} \bigr\rrvert /|\bv| > 0 \Bigr\},
\\
F_k &=& F^1_k \cap F^2_k,
\end{eqnarray*}
then, by \eqref{eqd107} and \eqref{eqd106}, $\P(F_k) \geq
1- 2^{-k+1}$.\eject

We will argue that if $F_k$ holds, then for all
$t\in[0, T_2]$ and $z\in K_{\delta_k}$,
%
%
\begin{eqnarray}
\label{j152} \bigl|\bigl\<X_{t} - Y_{t}, \bn(x_1)\bigr\>\bigr|
&\leq&(r/2) |X_{t} - Y_{t}|,
\\
\label{j153} \bigl|\bigl\<X_{t} - X^z_{t},
\bn(x_1)\bigr\>\bigr| &\leq& r \bigl|X_{t} - X^z_{t}\bigr|,
\\
\label{j154} \bigl|\bigl\<X^z_{t} - Y_{t},
\bn(x_1)\bigr\>\bigr| &\leq& r \bigl|X^z_{t} -
Y_{t}\bigr|.
\end{eqnarray}
We obtain \eqref{j152} from our assumption
that $|\<x_1-y_1, \bn(x_1)\>| \leq(r/2) |x_1-y_1|$ and
Lemma \ref{lemn31}.
If
$F^1_k$ holds, then $X^z_t \in\B(x_1, r/4)$ for all $t\in[0,
T_2]$ and $z\in K_{\delta_k}$. Hence, \eqref{j153} follows from
Lemma \ref{lemn31} applied with $c_1 = r$. The claim holds
for all $z\in K_{\delta_k}$ simultaneously because Lemma
\ref{lemn31} is deterministic.
We can make $\delta_k>0$ smaller, if necessary, so that
$|\<z-y_1, \bn(x_1)\>| \leq r |z-y_1|$ for all $k$ and all $z\in
K_{\delta_k}$. Once again, we apply
Lemma \ref{lemn31} with $c_1 = r$ and conclude that \eqref{j154}
holds true.

Estimates \eqref{j152}--\eqref{j154} have the following topological
consequences. Recall that $\pi_{x_1} \bz$ denotes the
projection of $\bz$ on $\T_{x_1} \prt D$. Assuming that $F_k$
holds and $t\leq T_2$, the set $\Gamma_t=\pi_{x_1}\{X^x_t,
x\in K_{\delta_k}\}$ is a closed loop that contains
$\pi_{x_1}X_t$ inside. When $t$ goes from $0$ to $T_2$, $\pi_{x_1}X_t$,
$\pi_{x_1}Y_t$ and $\Gamma_t$ evolve
continuously. If $X_t = Y_t$ for some $t\leq T_2$, then we must have
$\pi_{x_1}Y_s = \pi_{x_1}X^x_s$ for some $k\geq1$, $x\in K_{\delta_k}$
and $0\leq s\leq t$. This and \eqref{j154} imply that $Y_s = X^x_s$.
Hence, $X_t = Y_t = X^x_t$. But this
means that $F^2_k$ does not hold. Since $\P(F_k) \geq1-
2^{-k+1}$, we conclude that the probability that there exists
$t \in[0, T_2]$ such that $X_t = Y_t $ is less than
$2^{-k+1}$. Since $k$ and $b$ are arbitrarily large,
$\P^{x_1,y_1} (\exists t\in[0,T_1]\dvtx X_t = Y_t)=0$.

\textit{Step} 2.
Suppose that
$r\in(0,1/200)$, $x_1 \in\prt D$, $y_1 \in D_1$ and $x_1\ne y_1$.
In this step, we no longer assume that
$|\<x_1-y_1, \bn(x_1)\>| \leq(r/2) |x_1-y_1|$.
Also, note that we assume that $r\in(0,1/200)$ while in step 1 we
assumed that $r\in(0,1/100)$.

Suppose that $ \P^{x_1,y_1} ( \exists
t\in[0, T_1]\dvtx Y _{t} = X_{t}  ) = p_1
>0$. We will show that this assumption leads to a
contradiction. Let
\[
A =\bigl\{y\in\ol D\dvtx |x_1-y| = |x_1 -
y_1|, \bigl\<x_1-y, \bn(x_1)\bigr\> =
\bigl\<x_1-y_1, \bn(x_1)\bigr\>\bigr\}.
\]
The set $A$ is
a circle, possibly with a zero radius. If the radius of $A$ is
0, that is, if $A$ contains only $y_1$, then $x_1 - y_1$ is
parallel to $\bn(x_1)$. It is easy to see that for any $t_0>0$,
with probability
1, there exists time $t\in(0, t_0 \land T_1)$ such that $X_t \ne Y_t$, $X_t
\in\prt D$, $X_t - Y_t$ is not parallel to $\bn(X_t)$, and $t$ is the
terminal time of an excursion of $X$ from $\prt D$. Let
$U_r$ be the smallest such $t$ greater than $r>0$. We can
apply the strong Markov property at time $U_r$, for every rational time $r>0$,
and the result proved below for the case when $A$ does not
reduce to a single point to show that $X$ and $Y$ will not meet before $T_1$.

Hence, we will assume from now on that the set $A$ is a circle
with a nonzero radius.
Choose $n$
distinct points $y_1, \ldots, y_n$ in $A$, with $n> 2/p_1$. Let
$T_1^{y_j} =
\tau^X_{D_1} \land\tau^{X^{y_j}}_{D_1}$.
By our assumption and symmetry,
$ \P^{x_1,y_j} ( \exists
t\in[0, T_1^{y_j}]\dvtx X_{t} = X^{y_j}_{t}  ) = p_1$. It follows that
for some $j\ne k$,
\[
\P \bigl( \exists t\in\bigl[0, T_1^{y_j}\bigr]\dvtx
X_{t} = X^{y_j}_{t}, \mbox{ and } \exists s\in
\bigl[0, T_1^{y_k}\bigr]\dvtx X_{s} =
X^{y_k}_{s} \bigr) >0.
\]
If the event in the last formula holds, then for $u = s \lor t$ we have
$X^{y_j} _u = X^{y_k}_u$ and $u \leq\tau^X_{D_1} = \tau^{X^{y_j}}_{D_1}
= \tau^{X^{y_k}}_{D_1}$.
In other words, we have shown that if
$T_1^{y_j, y_k} =
\tau^{X^{y_j}}_{D_1} \land\tau^{X^{y_k}}_{D_1}$,
then
$ \P^{y_j,y_k} ( \exists
t\in[0, T_1^{y_j,y_k}]\dvtx X^{y_j} _{t} = X^{y_k}_{t}  ) >0$. We will
prove that this leads to a contradiction. If the processes $X^{y_j}$
and $X^{y_k}$ do not hit $\prt D$
before $T_1^{y_j,y_k}$, then of course they do not meet before
$T_1^{y_j,y_k}$. If
one of them hits $\prt D$ before time $T_1^{y_j,y_k}$, then we can
suppose without loss
of generality that $T_3:= T^{X^{y_j}}_{\prt D} \leq
T^{X^{y_k}}_{\prt D} \land T_1^{y_j,y_k}$. Then $|\<
X^{y_j}_{T_3}-X^{y_k}_{T_3},
\bn(X^{y_j}_{T_3})\>| \leq(r/4) |X^{y_j}_{T_3}-X^{y_k}_{T_3}| $.
Since $T_3 \leq T_1^{y_j,y_k}$, $\B(x_1, r/8) \in\B(X^{y_j}_{T_3},
r/4)$. Let $T_4 = \tau^{X^{y_j}}_{\B(X^{y_j}_{T_3}, r/4)} \land\tau
^{X^{y_k}}_{\B(X^{y_j}_{T_3}, r/4)} $. By step 1, applied with $2r$ in
place of $r$, and the strong Markov property applied at $T_3$,
\begin{eqnarray*}
\P^{y_j,y_k} \bigl( \exists t\in\bigl[0, T_1^{y_j,y_k}
\bigr]\dvtx X^{y_j} _{t} = X^{y_k}_{t}
\bigr) &\leq& \P^{y_j,y_k} \bigl( \exists t\in[0, T_4]\dvtx
X^{y_j} _{t} = X^{y_k}_{t} \bigr)
\\
&=& \P^{y_j,y_k} \bigl( \exists t\in[T_3, T_4]
\dvtx X^{y_j} _{t} = X^{y_k}_{t} \bigr)
=0.
\end{eqnarray*}
This contradicts our earlier assertion and finishes the proof.
\end{pf}

The next lemma is almost the same as a lemma that appeared in \cite
{BCJ}. It says that at the time when the local time reaches a fixed
level, the difference between the processes $X$ and $Y$ is very likely
to be ``almost parallel'' to $\prt D$.

%
\begin{lemma}\label{lemo302}
For any $b>0$ and $\beta_1 \in(0,1)$ there exist $c_0,\beta_2,
\eps_1>0$ such that if $\eps\leq\eps_1$, $x,y \in\ol D$ and
$|x - y| = \eps$, then
%
%
\begin{equation}
\label{eqd112} \P^{x, y} \biggl( \frac{ \llvert   \< Y_{\sigma^X_b} - X_{\sigma^X_b},
\n (X_{\sigma^X_b} ) \> \rrvert  } {
\llvert  Y_{\sigma^X_b} - X_{\sigma^X_b} \rrvert } \geq c_0
\eps^{\beta_1} \biggr) \leq\eps^{\beta_2}.
\end{equation}
\end{lemma}

\begin{pf}
The proof is similar to the proof of Lemma 4.6 in \cite{BCJ}, so
we only sketch the main ideas. The paper \cite{BCJ} is
concerned with 2-dimensional domains, but it is easy to see that
the results from that paper that we use here apply to
multidimensional domains.

By Lemma 4.1(ii) of \cite{BCJ}, $\P(L^Y_{\sigma^X_b} \geq a
)\leq c_1 e^{-c_2 a}$. Hence, for any $\beta_3>0$ and
$\beta_4>0$ depending on $\beta_3$,
\[
\P\bigl(L^Y_{\sigma^X_b} \geq\beta_3 |\log\eps|
\bigr) \leq c_1 \exp\bigl(-c_2 \beta_3 |\log\eps|\bigr)
= c_1 \eps^{\beta_4}.
\]
If the event $A_1:= \{L^Y_{\sigma^X_b} \leq\beta_3 |\log
\eps| \}$ holds, then by Lemma 3.8 of \cite{BCJ},
\[
\sup_{t\in[0,\sigma^X_b]}|X_{t}- Y_{t}|
\leq|X_0- Y_0| \exp\bigl( c_4 (1 +
\beta_3 |\log\eps|)\bigr) \leq c_5 \eps^{1-c_4\beta_3} =
c_5 \eps^{1-\beta_5},
\]
where $\beta_5 $ is defined as $c_4 \beta_3$. Choose
$\beta_3>0$ so small that $\beta_5 < \beta_1$, and we can find
$\beta_6$
such that $ \beta_1 < \beta_6 < 1-\beta_5$.

Let $T_1 = \inf\{t\geq0\dvtx X_t \in\prt D\}$ and $\{V_t, 0\leq t
\leq\sigma^X_b - T_1\}:= \{X_{\sigma^X_b-t}, 0\leq t \leq
\sigma^X_b - T_1\} $. If we condition on the values of
$X_{T_1}$ and $X_{\sigma_b^X}$, the process $V$ is a reflected
Brownian motion in $D$ starting from $X_{\sigma_b^X}$ and
conditioned to approach $X_{T_1}$ at its lifetime. It is easy
to see that $\P(|X_{T_1}- X_{\sigma^X_b}| \leq\eps^{\beta_1})
\leq c_6 \eps^{\beta_1}$.

Suppose that the event $A_2:=\{\dist(X_{T_1}, X_{\sigma^X_b})
\geq\eps^{\beta_1}\}$ holds. Conditionally on this event, the
probability that $V$ does not spend at least $\eps^{\beta_6}$ units of
local time on the boundary of $\prt D$ before leaving the ball
$\B(V_0, \eps^{\beta_1})$ is bounded by $ c_7 \eps^{\beta_6-
\beta_1}$. Let $A_3$ be the event that $V$ spends
$\eps^{\beta_6}$ or more units of local time on the boundary of
$\prt D$ before leaving the ball $\B(V_0, \eps^{\beta_1})$. Let
$T_2 = \sup\{t \leq\sigma^X_b\dvtx X_t \notin\B(V_0,
\eps^{\beta_1})\}$. If $A_1$ and $A_3$ hold, then $Y$ must hit $\prt D$
at some time $t\in[T_2, \sigma^X_b]$ because $\eps^{\beta_6}>
c_5 \eps^{1-\beta_5}$ for small $\eps$; that is, the amount of push
given to $X$ exceeds the maximum distance between the two
processes. We also have $X_{\sigma^X_b} \in\prt D$. The
maximum angle between normal vectors at points of $\prt D \cap
\B(V_0, \eps^{\beta_1})$ is less than $c_8 \eps^{\beta_1}$. A
modification of Lemma \ref{lemn31} shows that $|\<
X_{\sigma^X_b} - Y_{\sigma^X_b}, \bn(X_{\sigma^X_b})\>| \leq
|X_{\sigma^X_b} - Y_{\sigma^X_b}| c_9 \eps^{\beta_1}$. Recall
that, by Lemma \ref{lemo311}, $|X_{\sigma^X_b} -
Y_{\sigma^X_b}| >0$, a.s. We have shown that the complement of the
event in \eqref{eqd112}
occurs if $A_1\cap A_2 \cap A_3$ holds. Since $\P((A_1\cap
A_2 \cap A_3)^c) \leq c_1\eps^{\beta_4} + c_6 \eps^{\beta_1} + c_7
\eps^{\beta_6- \beta_1}$, the lemma follows.
\end{pf}

\section{The sign of the Lyapunov exponent}\label{secexponent}
This section is devoted to the calculation of the ``Lyapunov
exponent'' for the exterior of a three-dimensional ball. In our
model, the Lyapunov exponent is represented by $1+\lambda_\rho$
where $\lambda_\rho$ is defined in Theorem \ref{tho53}(ii).
This is a three-dimensional analogue of an exponent defined in
\cite{BCJ} for two-dimensional domains. The sign of this
exponent---positive for the domain $D$---has the fundamental
importance for this article.

Recall that $H^x$ is the excursion law for $X$ in $D$, and $\pi_x $
denotes the projection on the plane tangent to $\prt D$ at $x\in\prt
D$. For an
excursion $e$ and nonzero vector $\bv\in\R^3$, we let
$f_\bv(e) = \log|\pi_{e(\zeta-)} (\bv)| - \log|\bv| $. Note
that $f_\bv( e) \leq0$. Let $D_1 = \R^3 \setminus
\ol{\B(0,1)}$, and let $(\wh L_t, \wh H^x)$ be the exit system
for reflected Brownian motion $\wh X$ in $D_1$.

%
\begin{theorem}\label{tho53}
\textup{(i)} For every $x\in\prt D_1$ and $\bv\in\tngt_x \prt D_1$, $|\bv|>0$,
\[
\wh H^x\bigl( f_\bv(e)\bigr) = \sqrt{2} - 1 - \log(1+
\sqrt2).
\]

\textup{(ii)} Let $\lambda_\rho(x,\bv) = H^{x} (f_\bv( e))$. We have
uniformly in $x\in\prt D$ and $\bv\in\tngt_x \prt D$, $|\bv|>0$,
\[
\lim_{\rho\to\infty} \lambda_\rho(x,\bv) = \lim
_{\rho\to\infty} H^{x} \bigl(f_\bv( e)\bigr)=
\sqrt{2} +\log2 - 2 - \log(1+\sqrt2) \approx-0.774013.
\]
\end{theorem}

The actual value of the Lyapunov exponents comes from a
computation presented in the \hyperref[appendix]{Appendix}, which leads to the following lemma.

%
\begin{lemma}\label{lemj271}
We have
%
%
\begin{eqnarray}
\label{eqj276}  &&\int_0^{2\pi} \int
_0^\pi\frac{1}{16\pi} \frac{\sin\alpha}{\sin^3 (\alpha/2)} \log
\bigl(\sin^2 \beta+ \cos^2 \beta\cos^2 \alpha
\bigr)^{1/2} \,d\alpha \,d\beta
\nonumber
\\[-8pt]
\\[-8pt]
\nonumber
&&\qquad = \sqrt{2} - 1 - \log(1+\sqrt2)
\end{eqnarray}
and
%
%
\begin{equation}\label{eq061}
\int_0^{2\pi} \int_0^\pi
\frac{1}{4\pi} (\sin\alpha) \log\bigl(\sin^2 \beta+
\cos^2 \beta\cos^2 \alpha\bigr)^{1/2} \,d\alpha
\,d\beta
 = \log2 - 1.
\end{equation}
\end{lemma}

\begin{pf*}{Proof of Theorem \ref{tho53}}
(i)
We will derive a formula for the expectation of a random variable under
the excursion law from the well-known formula for the density of the
harmonic measure.

Let $\tau^X_A= \inf\{t\geq0\dvtx X_t \notin A\}$. Recall that
$\P^x_{D_1}$
denotes the distribution of Brownian motion
starting from $x$ and killed at the time $\tau^X_{D_1}$. Let
$\mu_r$ denote the uniform probability distribution on the
sphere $\B(0,r)$; we will abbreviate $\mu_1=\mu$. An explicit
formula for the harmonic measure in $D_1$ is given in
\cite{PS}, Theorem~3.1, page~102. That formula implies that
%
%
\begin{equation}
\label{j263} \P^x_{D_1} \bigl(X\bigl(\tau^X_{D_1}-
\bigr) \in dy\bigr) = a(x) |x-y|^{-3} \mu(dy)
\end{equation}
for $x\in D_1$ and $y\in\prt\B(0,1)$, where $a(x)$ is such
that for $x,y\in\prt\B(0,1)$,
\[
\lim_{\delta\downarrow0} \frac{\P^{x+ \delta\n(x)}_{D_1}(X(\tau^X_{D_1}-) \in dy) } {
2\delta|x+ \delta\n(x)-y|^{-3} \mu(dy)} =1.
\]
We use this and \eqref{eqM52} to see that for $x,y\in\B(0,1)$,
%
%
\begin{equation}
\label{eqo81} \wh H^{x}_{D_1}\bigl(e(\zeta-) \in dy\bigr) =
2 |x -y|^{-3} \mu(dy).
\end{equation}

%
\begin{figure}

\includegraphics{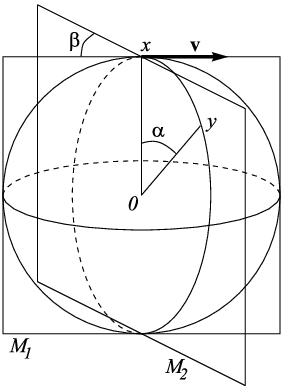}

\caption{The spherical coordinates $\alpha$ and $\beta$ used in the
derivation of length reduction
of the vector~$\bv$.}\label{figearth2}
\end{figure}

Note that, by symmetry, $\wh H^x( f_\bv(e))$ does not depend on
$x\in\prt D_1$ and $\bv\in\tngt_x \prt D_1$, so
we can fix arbitrarily $x\in\prt D_1$ and $\bv\in\tngt_x \prt D_1$
with $|\bv|>0$.
We will express $\mu(dy)$ and $ f_\bv(e)$ using
spherical coordinates. Let $\alpha$ denote the angle between
the radii of $\B(0,1)$ going from $0$ to $x$ and $y$ in $\prt\B(0,1)$.
Let $M_1$ be the
plane that contains $\bv$ and $0$, and let $M_2$ be the plane
that contains $0,x$ and $y$. Let $\beta$ be the angle between
$M_1$ and $M_2$; see Figure~\ref{figearth2}.
The uniform probability measure on the sphere $\prt\B(0,1)$
can be represented as\looseness=-1
%
%
\begin{equation}
\label{eqo82} \mu(dy) = (2\pi)^{-1}\,d\beta(1/2) \sin\alpha \,d\alpha.
\end{equation}
We have $|x-y| = 2\sin(\alpha/2)$, so
\eqref{eqo81}--\eqref{eqo82} yield
%
%
\begin{eqnarray}
\label{eqo83} \wh H^{x}_{D_1}\bigl(e(\zeta-) \in dy\bigr) &=&
2 \bigl(2\sin(\alpha/2)\bigr)^{-3} (2\pi)^{-1}\,d\beta(1/2)
\sin\alpha \,d\alpha
\nonumber
\\[-8pt]
\\[-8pt]
\nonumber
&=&\frac{1}{16 \pi} \frac{\sin\alpha} {\sin^3 (\alpha/2)} \,d\alpha \,d\beta.
\nonumber
\end{eqnarray}
It is elementary (although somewhat tedious) to check that
\[
\frac{|\pi_{y} (\bv)|}{|\bv|} = \bigl(\sin^2 \beta+ \cos^2 \beta
\cos^2 \alpha\bigr)^{1/2}.
\]
If $e(\zeta-) = y$, then
%
%
\begin{equation}
\label{eqo810} f_\bv( e) =\log\bigl|\pi_{e(\zeta-)} (\bv)\bigr| - \log|\bv|
=\log\bigl(\sin^2 \beta+ \cos^2 \beta\cos^2
\alpha\bigr)^{1/2}.
\end{equation}
We combine this formula with \eqref{eqo83} to see that
\[
\wh H^{x}_{D_1}\bigl( f_\bv( e)\bigr) = \int
_0^{2\pi} \int_0^\pi
\frac{1}{16\pi} \frac{\sin\alpha} {\sin^3 (\alpha/2)} \log\bigl(\sin^2 \beta+
\cos^2 \beta\cos^2 \alpha\bigr)^{1/2} \,d\alpha
\,d\beta.
\]
Part (i) of the theorem follows from this formula and Lemma
\ref{lemj271}.

(ii)
We will divide excursions into two families---these that return to
$\prt D$ relatively soon, and those that travel far away from $\prt D$.
The first part of the following argument shows that the excursions
which travel far away are likely to hit $\prt D$ at a random point
distributed almost uniformly over $\prt D$.
Excursions from $\prt D$ which do not travel far away contribute to the
estimate about as much as excursions from $\prt D_1$.

First, we will show that the harmonic measure in a
spherical shell has a density very close to a constant, under
some assumptions. Let $S(r, R) = \B(0,R) \setminus
\ol{\B(0,r)}$ denote the spherical shell with center 0, inner
radius $r$ and outer radius $R$. Let $h(r,R; x,y)$ be the
density of harmonic measure in $S(r,R)$ restricted to $\prt\B(0,r)$;
more precisely, let
\[
h(r,R; x,y) = \frac{\P^x_{S(r,R)} (X_{\tau^X_{S(r,R)}} \in dy)} {
\mu_r(dy)}
\]
for $x\in S(r,R)$ and $y\in\prt\B(0,r)$. For fixed $r,R$ and
$y$, the function $x\to h(r,R; x,y)$ is harmonic in $S(r,R)$.
By the Harnack principle, there exists $c_1>0$ such that for
any positive harmonic function $f$ in $\B(0,1)$, we have $c_1 <
f(v) / f(z) < 1/c_1$ for all $v,z\in\B(0,1/2)$. By scaling,
for any $r>0$ and for any positive harmonic function $f$ in
$\B(0,r)$, we have $c_1 < f(v) / f(z) < 1/c_1$ for all $v,z\in
\B(0,r/2)$. We can find a finite number $N$ such that there
exist $x_k \in\prt\B(0, 2r)$, $k=1,\ldots, N$, such that $\prt
\B(0,2r) \subset\bigcup_{1\leq k \leq N} \B(x_k, r/2)$. Then
the standard chaining argument shows that for $R\geq3r$ and
every positive harmonic function $f$ in $S(r,R)$, we have
$c_1^N < f(v) / f(z) < 1/c_1^N$ for all $v,z\in\B(0,2r)$. Let
$c_2 = c_1^N$. Consider a large integer $m$. As a particular
case of the last formula, we obtain that
%
%
\begin{equation}
\label{eqo84} c_2 < h\bigl(2^k,2^m; x,y
\bigr) / h\bigl(2^k,2^m; v,y\bigr)< 1/c_2
\end{equation}
for $0\leq k \leq m-2$, $y\in\prt\B(0,2^k)$ and $x,v \in\prt
\B(0, 2^{k+1})$. By the strong Markov property for Brownian
motion applied at the hitting time of $\prt\B(0, 2^{k+1})$,
\[
h\bigl(2^k,2^m; x,y\bigr) = \int_{\prt\B(0, 2^{k+1})}
h\bigl(2^k,2^m; v,y\bigr) h\bigl(2^{k+1},2^m;
x,v\bigr) \mu_{2^{k+1}}(dv)
\]
for $0\leq k \leq m-3$, $y\in\prt\B(0,2^k)$ and $x \in\prt
\B(0, 2^{k+2})$. This, \eqref{eqo84} and Lemma 6.1 of
\cite{BTW} imply, using the same argument as at the end of the
proof of Theorem~6.1 in~\cite{BTW}, that for any $c_3<1$
arbitrarily close to $1$ there exists $m_0$ such that for
$m\geq m_0$,
\[
c_3 < h\bigl(1,2^m; x,y\bigr) / h\bigl(1,2^m;
v,y\bigr)< 1/c_3
\]
for $y\in\prt\B(0,1)$ and $x,v \in\prt\B(0, 2^{m-1})$. By
applying a rotation, we obtain the following variant of the
above result. For any $c_3<1$ arbitrarily close to $1$ there
exists $m_0$ such that for $m\geq m_0$,
%
%
\begin{equation}
\label{eqo85} c_3 < h\bigl(1,2^m; x,y\bigr) / h
\bigl(1,2^m; x,z\bigr)< 1/c_3
\end{equation}
for $y,z\in\prt\B(0,1)$ and $x \in\prt\B(0, 2^{m-1})$.

Suppose that $\rho$ used in the definition of $D$ satisfies
$2^{m+1} \leq\rho\leq2^{m+2}$ for some $m\geq m_0$. Let
\begin{eqnarray*}
T_1 & =& 0,
\\
U_k & = &\inf\bigl\{t > T_k\dvtx X_{t} \in
\prt S\bigl(1,2^{m}\bigr)\bigr\},\qquad k\geq1,
\\
T_k & = &\inf\bigl\{t > U_{k-1}\dvtx X_t \in
\prt\B\bigl(0, 2^{m-1}\bigr)\bigr\},\qquad k\geq2.
\end{eqnarray*}
Then for $x\in\prt\B(0, 2^{m-1})$ and $y \in\prt D=\prt
\B(0,1)$,
\begin{eqnarray*}
\P^x_D (X_{T^X_{\prt D}} \in dy) &= &\sum
_{k=1}^\infty \P^x_D
\bigl(X_{U_k} \in dy; X_{U_j} \in\prt\B\bigl(0,2^m
\bigr), j < k \bigr)
\\
&= &\sum_{k=1}^\infty \E^x_D
\bigl( \P^{X_{T_k}}_{S(1,2^m)} (X_{U_k} \in dy)
\bone_{\{ X_{U_j} \in\prt\B(0,2^m), j < k\}} \bigr)
\\
&= &\sum_{k=1}^\infty \E^x_D
\bigl( h\bigl(1, 2^m; X_{T_k}, y\bigr) \mu(dy)
\bone_{\{ X_{U_j} \in\prt\B(0,2^m), j < k\}} \bigr).
\end{eqnarray*}
This and \eqref{eqo85} imply that
\[
c_3 < \P^x_D(X_{T^X_{\prt D}} \in dy) /
\P^x_D(X_{T^X_{\prt D}} \in dz) < 1/c_3
\]
for $y,z\in\prt\B(0,1)$ and $x \in\prt\B(0, 2^{m-1})$. The last estimate
and the strong Markov property of excursion laws applied at the
hitting time $T_{\prt\B(0,2^{m-1})}$ of $\prt\B(0,2^{m-1})$ show that
%
%
\begin{eqnarray}
\label{eqo87} c_3 &< &H^x\bigl( e(\zeta-) \in dy;
T_{\prt\B(0,2^{m-1})}< \zeta\bigr)
\nonumber
\\[-8pt]
\\[-8pt]
\nonumber
&&{} / H^x\bigl( e(\zeta-) \in dz;
T_{\prt\B(0,2^{m-1})}< \zeta\bigr) < 1/c_3
\end{eqnarray}
for $x,y,z\in\prt\B(0,1)$. Informally speaking, for sufficiently
large $m$ (and $\rho$), the density of $H^x( e(\zeta-) \in dy;
T_{\prt
\B(0,2^{m-1})}< \zeta)$ is arbitrarily close to a constant on~$\prt D$.

The probability that 3-dimensional Brownian motion starting
from $x + \delta\n(x)$, $x\in\prt\B(0,1)$, will never return
to $\prt\B(0,1)$ is equal to $1-(1+ \delta) ^{-1}$. This and
\eqref{eqM52} imply that for any $c_4>0$ there exists $m_1$
such that for $m\geq m_1$ and $x\in\prt D$,
\[
1- c_4 < H^x(T_{\prt\B(0,2^{m-1})} < \zeta) < 1+
c_4.
\]
It follows from this and \eqref{eqo87} that for any $c_5>0$
and sufficiently large $\rho$, we have for $x,y \in\prt D$,
%
%
\begin{equation}
\label{eqo811} 1-c_5 < H^x\bigl( e(\zeta-) \in dy;
T_{\prt\B(0,2^{m-1})}< \zeta\bigr) / \mu(dy) < 1+c_5.
\end{equation}

We have by continuity of probability that
%
%
\begin{equation}
\label{o91} \lim_{m\to\infty} H^x\bigl( e(\zeta-) \in dy;
T_{\prt\B(0,2^{m-1})}> \zeta\bigr) = \wh H^x\bigl( e(\zeta-) \in dy
\bigr).
\end{equation}
Note that the above limit is monotone.

We have
%
%
\begin{eqnarray}
\label{e089} H^{x} \bigl(f_\bv( e)\bigr) &=& \int
_{\prt D} \bigl(\log\bigl|\pi_{y} (\bv)\bigr| - \log|\bv|\bigr)
H^x\bigl( e(\zeta-) \in dy\bigr)
\nonumber\\
&=& \int_{\prt D} \bigl(\log\bigl|\pi_{y} (\bv)\bigr| - \log|
\bv|\bigr) H^x\bigl( e(\zeta-) \in dy; T_{\prt\B(0,2^{m-1})}> \zeta\bigr)
\\
&&{} + \int_{\prt D} \bigl(\log\bigl|\pi_{y} (\bv)\bigr| - \log|
\bv|\bigr) H^x\bigl( e(\zeta-) \in dy; T_{\prt\B(0,2^{m-1})}< \zeta\bigr).
\nonumber
\end{eqnarray}
It follows from \eqref{o91}, monotone convergence theorem and
part (i) of this theorem that
%
%
\begin{eqnarray}
\label{eqo815} &&\lim_{m\to\infty} \int_{\prt D}
\bigl(\log\bigl|\pi_{y} (\bv)\bigr| - \log|\bv|\bigr) H^x\bigl( e(
\zeta-) \in dy; T_{\prt\B(0,2^{m-1})}> \zeta\bigr)
\nonumber
\\
&&\qquad=\int_{\prt D} \bigl(\log\bigl|\pi_{y} (\bv)\bigr| - \log|
\bv|\bigr) \wh H^x\bigl( e(\zeta-) \in dy\bigr) = \wh H^x
\bigl(f_\bv(e)\bigr) \\
&&\qquad= \sqrt{2} - 1 - \log (1+\sqrt2).\nonumber
\end{eqnarray}
We combine \eqref{eqo82}, \eqref{eqo810}, \eqref{eqo811}
and Lemma \ref{lemj271} to obtain
\begin{eqnarray*}
&&\lim_{m\to\infty} \int_{\prt D} \bigl(\log\bigl|
\pi_{y} (\bv)\bigr| - \log|\bv|\bigr) H^x\bigl( e(\zeta-) \in
dy; T_{\prt\B(0,2^{m-1})}< \zeta\bigr)
\\
&&\qquad=\int_{\prt D} \bigl(\log\bigl|\pi_{y} (\bv)\bigr| - \log|
\bv|\bigr) \mu(dy)
\\
&&\qquad= \int_0^{2\pi} \int_0^\pi
\frac{1}{4\pi} \sin\alpha \log\bigl(\sin^2 \beta+
\cos^2 \beta\cos^2 \alpha\bigr)^{1/2} \,d\alpha
\,d\beta= \log2 - 1.
\end{eqnarray*}
Part (ii) of the theorem follows from this formula,
\eqref{e089} and \eqref{eqo815}.
\end{pf*}

\section{Recurrence of synchronous couplings in 3-dimensional
torus}\label{secrec}

The natural scale for our arguments is the combination of the local
time scale and the logarithmic scale.
The reason is that when the ``real'' time reaches a fixed level, the
vector between $X$ and $Y$ is not parallel to $\prt D$ in any
reasonable sense. On the contrary, when the local time reaches a fixed
level, the vector between $X$ and $Y$ is approximately parallel to
$\prt D$, in a sense. The last observation is used repeatedly in our arguments.
The following definitions introduce the ``local time scale.''

Let $\sigma^X_t = \inf\{s\geq0\dvtx L^X_s \geq t\}$, $\sigma^Y_t =
\inf\{s\geq0\dvtx L^Y_s \geq t\}$ and $\sigma'_b = \sigma^X_b
\land\sigma^Y_b$. The random variable $\sigma'_b$ was denoted
$\sigma_*$ in Section~\ref{secdiffrbm} for consistency with
the notation of \cite{B3}. The new notation, $\sigma'_b$, is
more appropriate for this paper.
An alternative formula is
$\sigma'_b = \inf\{t\geq0\dvtx L^X_t \lor L^Y_t \geq b\}$. Let
\[
\sigma'_{(k+1)b} = \inf \bigl\{t\geq\sigma'_{kb}
\dvtx \bigl(L^{X}_t - L^{X}_{\sigma'_{kb}}
\bigr) \lor\bigl( L^{Y}_t - L^{Y}_{\sigma'_{kb}}
\bigr) \geq b \bigr\}
\]
for $k\geq1$.
Note that, typically,
$\sigma'_{kb} $ is not equal to $ \inf\{t\geq0\dvtx L^X_t \lor L^Y_t
\geq
kb\}$. Let
%
%
\begin{eqnarray}
\label{eqstdnot} R_t &=& |X_t - Y_t|,\qquad
M_t = \log R_t,\qquad t\geq0,
\nonumber
\\[-8pt]
\\[-8pt]
\nonumber
V_k &=&M_{\sigma'_{kb}},\qquad k=0,1,\ldots.
\end{eqnarray}

The following lemma shows that over a long time interval, the distance
between~$X$ and~$Y$ is unlikely to decrease.\vspace*{-3pt}

%
\begin{lemma}\label{lemo303}
For any $c_0>0$, $\beta_1 \in(0,1)$ and $p<1$ there exist
$c_1, b,\eps_1>0$ such that if $\eps\leq\eps_1$, $x_0
\in\prt D$, $y_0\in\ol D$, $|x_0 - y_0| = \eps$, $X_0 = x_0$,
$Y_0=y_0$ and
%
%
\begin{equation}
\label{eqn710} \frac{ \llvert   \< y_0 - x_0,
\n (x_0 ) \> \rrvert  } {
\llvert  y_0-x_0 \rrvert } \leq c_0 \eps^{\beta_1},
\end{equation}
then
\[
\P^{x_0,y_0} ( V_1- V_0 \geq c_1 )
\geq p.
\]
\end{lemma}

\begin{pf}
It suffices to prove the lemma for $c_0=1$. To see this, choose any
$\beta_1^* \in(0,\beta_1)$ and note that $c_0
\eps^{\beta_1} \leq\eps^{\beta_1^*}$ for some $\eps_*>0$ and all
$\eps
\in(0,\eps_*)$. Hence, if the lemma is proved for $\beta_1^*$ in place
of $\beta_1$, with $1$ in place of $c_0$ and for $\eps< \eps_1$, then
it also holds for $\beta_1$, $c_0$ and $\eps< \eps_1 \land\eps_*$.

\textit{Step} 1.
In this step, the distance between $X$ and $Y$ is approximated by a sum
of increments related to excursions. The rate of increase (or decrease)
of the distance is expressed using excursion theory-based calculations
from Section~\ref{secexponent}.

Recall the results from \cite{B3} reviewed in
Section~\ref{secdiffrbm}. Suppose that $\eps_*>0$, $x_0 \in
\prt D$, $\bv\in\T_{x_0}\prt D$, $|\bv| =1$, $X_0 = x_0$ and
let $e_u$ be the first excursion of $X$ from $\prt D$ with
$|e_u(0) - e_u(\zeta-)| \geq\eps_*$. Let $x_1 = e_u(\zeta-)$
and $\alpha= 3/4$. We will estimate $\P^{x_0}(|x_0 - e_u(0)|
\geq\eps_*^\alpha)$ and $\E^{x_0} [|\log|\pi_{x_1}\bv||
\bone_{\{|x_0 - e_u(0)| \leq\eps_*^\alpha\}} ] $.

Let $U_0 = \varnothing$, $U_1 = \B(x_0, \eps_*)\cap\prt D$, $U_k =
(\B(x_0,
k\eps_*) \setminus\B(x_0, (k-1)\eps_*))\cap\prt D$ for $k\geq
2$ and $T_0=0$.
Set $j_0 = 1$ and for $k \geq0$, set
\begin{eqnarray*}
T_{k+1} &=&\inf\bigl\{t\geq T_{k}\dvtx X_t \in
\prt D \setminus(U_{j_k -1} \cup U_{j_k } \cup U_{j_k +1} )
\bigr\},
\\
j_{k+1} &=& \min\{i\geq0\dvtx X_{T_{k+1}} \in U_i\}.
\end{eqnarray*}

Recall that $u$ denotes the starting time of the first
excursion of $X$ from $\prt D$ with $|e_u(0) - e_u(\zeta-)|
\geq\eps_*$. Let $p_1$ be the probability that $|x_0 -
e_u(0)|<\eps_* $ and note that $p_1>0$. The strong Markov
property applied at $T_k$ shows that $\P^{x_0}(u\leq T_{k+1}
\mid u \geq T_k)\geq p_1$. It follows that $\P^{x_0}(u\geq
T_{k}) \leq(1-p_1)^k$. For the event $\{|x_0 - e_u(0)| \geq
\eps_*^\alpha\}$ to occur, we have to have $u \geq T_k$ with $k \geq
\eps_*^\alpha/(2\eps_*)$. It follows that, setting $c_1 = -
(1/2)\log(1-p_1)>0$,
%
%
\begin{equation}
\label{n59} \P^{x_0}\bigl(\bigl|x_0 - e_u(0)\bigr| \geq
\eps_*^\alpha\bigr) \leq(1-p_1)^{\eps_*^\alpha/(2\eps_*)} = \exp\bigl(
- c_1 \eps_*^{\alpha-1}\bigr).
\end{equation}

Let $\beta= 5/8$ and note that if $|x_1 - x_0| \leq
\eps_*^\beta$, then $|\log|\pi_{x_1}\bv||  \leq c_2
\eps_*^{2\beta}$. Hence,
%
%
\begin{eqnarray}
\label{n63} &&\E^{x_0}\bigl [\bigl |\log|\pi_{x_1}\bv|\bigr|
\bone_{\{|x_0
- e_u(0)| \leq\eps_*^\alpha\}} \bigr]\nonumber\\
&&\qquad= \E^{x_0} \bigl[ \bigl|\log|\pi_{x_1}\bv|\bigr|
\bone_{\{|x_0
- e_u(0)| \leq\eps_*^\alpha\}} \bone_{\{ |x_1 - x_0| \leq\eps_*^\beta\}} \bigr]
\nonumber
\\[-8pt]
\\[-8pt]
\nonumber
&&\qquad\quad{} + \E^{x_0} \bigl[ \bigl|\log|\pi_{x_1}\bv|\bigr| \bone_{\{|x_0 -
e_u(0)| \leq\eps_*^\alpha\}}
\bone_{\{ |x_1 - x_0| \geq\eps_*^\beta\}} \bigr]
\\
& &\qquad\leq c_2 \eps_*^{2\beta} + \E^{x_0} \bigl[\bigl|\log|
\pi_{x_1}\bv|\bigr| \bone_{\{|x_0 -
e_u(0)| \leq\eps_*^\alpha\}} \bone_{\{ |x_1 - x_0| \geq\eps_*^\beta\}} \bigr].
\nonumber
\end{eqnarray}

It follows from \eqref{eqo81} and \eqref{eqo811} that for
large $\rho$, small $\eps_*$, $|x_1 - x_0| \geq\eps_*^\beta$
and $|x_0 - x| \leq\eps_*^\alpha$,
%
%
\begin{equation}
\label{eqn72} \frac{H^{x}(e(\zeta-) \in d x_1)} {
H^{x_0}(e(\zeta-) \in d x_1)} \leq\frac{ |x -x_1|^{-3}} { |x_0 -x_1|^{-3}} \leq\frac{(\eps_*^\beta- \eps_*^\alpha)^{-3}}{\eps_*^{-3\beta}}
\leq1 + 6 \eps_*^{\alpha-\beta}.
\end{equation}

Let $c_* =\sqrt{2} +\log2 - 2 - \log(1+\sqrt2) \approx
-0.77$ be the constant in the statement of Theorem
\ref{tho53}(ii). Theorem \ref{tho53}(ii), the exit system
formula \eqref{exitsyst} and \eqref{eqn72} imply that for
any $c_4 \in(-c_*,1)$ and $c_3 \in(0, c_4 + c_*)$, all large $\rho$
and small $\eps_*>0$,
\begin{eqnarray*}
&&\E^{x_0} \bigl[\bigl |\log|\pi_{x_1}\bv|\bigr| \bone_{\{|x_0 -
e_u(0)| \leq\eps_*^\alpha\}}
\bone_{\{ |x_1 - x_0| \geq\eps_*^\beta\}} \bigr]
\\
&&\quad= \E^{x_0} \frac{ H^{X_u}
(\bigl|\log|\pi_{e(\zeta-)}\bv|\bigr|  \bone_{\{|x_0 -
X_u| \leq\eps_*^\alpha\}}
\bone_{\{ |e(\zeta-) - x_0| \geq\eps_*^\beta\}}
)} {
H^{X_u}
(
\bone_{\{ |e(\zeta-) - X_u| \geq\eps_* \}}
)}
\\
&&\quad\leq\frac{(1 + 6 \eps_*^{\alpha-\beta}) H^{x_0}
(|\log|\pi_{e(\zeta-)}\bv||
\bone_{\{ |e(\zeta-) - x_0| \geq\eps_*^\beta\}}
)} {
H^{x_0}
(
\bone_{\{ |e(\zeta-) - x_0| \geq\eps_* \}}
)}
\\
&&\quad\leq\frac{(1 + 6 \eps_*^{\alpha-\beta})
(c_3+|\sqrt{2} +\log2 - 2 - \log(1+\sqrt2)|)} {
H^{x_0}
(
\bone_{\{ |e(\zeta-) - x_0| \geq\eps_* \}}
)}
\\
&&\quad\leq c_4 / H^{x_0} ( \bone_{\{ |e(\zeta-) - x_0| \geq\eps_* \}} ).
\end{eqnarray*}
We combine the last estimate and \eqref{n63} to obtain
%
%
\begin{equation}
\label{n64}\qquad  \E^{x_0} \bigl[\bigl |\log|\pi_{x_1}\bv|\bigr|
\bone_{\{|x_0
- e_u(0)| \leq\eps_*^\alpha\}} \bigr] \leq c_2 \eps_*^{2\beta} +
c_4 / H^{x_0} (\bone_{\{ |e(\zeta-) - x_0| \geq\eps_* \}
} ).
\end{equation}

Recall the notation from the paragraph containing
\eqref{eqd111}. Consider an arbitrary $\bv_0 \in\R^3$. Since
$\prt D$ is a sphere with the unit radius, $\sh(x)$ is the
identity operator so $ \I_k = \exp(\Delta\ell^*_k )
\pi_{x^*_k}$ and, therefore,
%
%
\begin{eqnarray}
\label{n512} \I_{m^*} \circ\cdots\circ\I_0 (
\bv_0) &=&\exp \biggl(\sum_{0 \leq k \leq m^*} \Delta
\ell^*_k \biggr) \pi_{x^*_{m^*}} \circ\cdots\circ
\pi_{x^*_0} (\bv_0)
\nonumber\\
&=&\exp \bigl( \ell^*_{m^*+1} \bigr)
\pi_{x^*_{m^*}} \circ
\cdots\circ\pi_{x^*_0} (\bv_0)
\\
&=&\exp \bigl( L^X_{\sigma'_b} \bigr)
\pi_{x^*_{m^*}} \circ\cdots\circ\pi_{x^*_0} (\bv_0).\nonumber
\end{eqnarray}
We will estimate the above quantity, starting with the composition of
projection operators. We have
%
%
\begin{eqnarray}
\label{n513} && \log\bigl\llvert \pi_{x^*_{m^*}} \circ\cdots\circ
\pi_{x^*_0} (\bv_0) \bigr\rrvert
\nonumber
\\
&&\qquad= \sum_{1 \leq k \leq m^*} \bigl( \log\bigl\llvert
\pi_{x^*_{k}} \circ\cdots\circ\pi_{x^*_0} (\bv_0) \bigr
\rrvert - \log\bigl\llvert \pi_{x^*_{k-1}} \circ\cdots\circ\pi_{x^*_0}
(\bv_0) \bigr\rrvert \bigr)\\
&&\qquad\quad{} + \log\bigl\llvert \pi_{x^*_0} (
\bv_0)\bigr\rrvert.\nonumber
\end{eqnarray}

By the strong Markov property applied at the excursion endpoint
$s_{k-1}:= t^*_{k-1} + \zeta(e_{t^*_{k-1}})$, the conditional
distribution of
\[
\log\bigl\llvert \pi_{x^*_{k}} \circ\cdots\circ \pi_{x^*_0} (
\bv_0) \bigr\rrvert - \log\bigl\llvert \pi_{x^*_{k-1}} \circ
\cdots\circ\pi_{x^*_0} (\bv_0) \bigr\rrvert
\]
given $\F_{s_{k-1}}$ is the same as that of $|\log
|\pi_{x_1}\bv|| $, introduced at the beginning of the proof. Let
\[
F_k = \bigl\{ \bigl\llvert x^*_{k-1} -
e_{t^*_k}(0) \bigr\rrvert \leq\eps_*^\alpha \bigr\}.
\]
We see that the events $F_k$, $k\geq1$, are independent and so are the
random variables
%
%
\begin{equation}
\label{eqn83} \bigl\llvert \log\bigl\llvert \pi_{x^*_{k}} \circ\cdots\circ
\pi_{x^*_0} (\bv_0) \bigr\rrvert - \log \bigl| \pi_{x^*_{k-1}}
\circ \cdots\circ\pi_{x^*_0} (\bv_0) \bigr|\bigr\rrvert
\bone_{F_k}.
\end{equation}
It follows from \eqref{n64} that
\begin{eqnarray*}
&&\E^{x_0} \bigl[ \bigl\llvert \log \bigl| \pi_{x^*_{k}} \circ\cdots
\circ \pi_{x^*_0} (\bv_0) \bigr| - \log \bigl| \pi_{x^*_{k-1}}
\circ \cdots\circ\pi_{x^*_0} (\bv_0) \bigr|\bigr\rrvert
\bone_{F_k} \mid\F_{s_{k-1}} \bigr]
\\
&&\qquad \leq c_2 \eps_*^{2\beta} + c_4 /
H^{x_0} (\bone_{\{ |e(\zeta-) - x_0| \geq\eps_* \}
} ).
\end{eqnarray*}
Thus the process
\begin{eqnarray*}
N_n &=& n \bigl(c_2 \eps_*^{2\beta} +
c_4 / H^{x_0} (\bone_{\{ |e(\zeta-) - x_0| \geq\eps_* \}
} )\bigr)
\\
&&{} - \sum_{1 \leq k \leq n} \bigl\llvert \log \bigl|
\pi_{x^*_{k}} \circ\cdots\circ\pi_{x^*_0} (\bv_0)\bigr | -
\log \bigl| \pi_{x^*_{k-1}} \circ\cdots\circ\pi_{x^*_0} (
\bv_0) \bigr| \bigr\rrvert \bone_{F_k}
\end{eqnarray*}
is a submartingale. By the optional stopping theorem, $\E^{x_0}
N_{m^*} \geq0$, so
%
%
\begin{eqnarray}
\label{n57}\qquad  &&\E^{x_0} \biggl[ \sum_{1 \leq k \leq m^*}
\bigl\llvert \log \bigl| \pi_{x^*_{k}} \circ\cdots\circ\pi_{x^*_0} (
\bv_0) \bigr| - \log \bigl| \pi_{x^*_{k-1}} \circ\cdots\circ
\pi_{x^*_0} (\bv_0) \bigr| \bigr\rrvert \bone_{F_k}
\biggr]
\nonumber
\\[-8pt]
\\[-8pt]
\nonumber
&&\qquad\leq\E^{x_0} m^* \bigl(c_2 \eps_*^{2\beta} +
c_4 / H^{x_0} (\bone_{\{ |e(\zeta-) - x_0| \geq\eps_* \}
} )\bigr).
\end{eqnarray}
Formula \eqref{eqo81} implies that $H^{x_0}  ( \bone_{\{
|e(\zeta
-) -
x_0| \geq\eps_* \}}  ) \leq c_5 /\eps_*$. It follows from
the definition of $m^*$ and the exit system formula
\eqref{exitsyst} that $m^*$ has the Poisson distribution with
the expected value $b H^{x_0}  ( \bone_{\{ |e(\zeta-) -
x_0| \geq\eps_* \}}  )$. These observations and
\eqref{n57} yield for some $c_6>0$, any $c_7 \in(c_4,1)$ and
small $\eps_*$,
%
%
\begin{eqnarray}
\label{n58} &&\E^{x_0} \biggl[ \sum_{1 \leq k \leq m^*}
\bigl\llvert \log \bigl| \pi_{x^*_{k}} \circ\cdots\circ\pi_{x^*_0} (
\bv_0)\bigr | - \log \bigl| \pi_{x^*_{k-1}} \circ\cdots\circ
\pi_{x^*_0} (\bv_0) \bigr| \bigr\rrvert \prod
_{1 \leq j \leq m^*} \bone_{F_j} \biggr]\nonumber\hspace*{-15pt}
\\
&&\qquad\leq\E^{x_0} \biggl[ \sum_{1 \leq k \leq m^*} \bigl
\llvert \log \bigl| \pi_{x^*_{k}} \circ\cdots\circ\pi_{x^*_0} (
\bv_0) \bigr| - \log \bigl| \pi_{x^*_{k-1}} \circ\cdots\circ
\pi_{x^*_0} (\bv_0) \bigr| \bigr\rrvert \bone_{F_k}
\biggr]\hspace*{-15pt}
\\
&&\qquad\leq\bigl(c_6 \eps_*^{2\beta-1} + c_4 \bigr)b
\leq c_7 b.
\nonumber\hspace*{-15pt}
\end{eqnarray}
In addition, since we are dealing with a sum of i.i.d. random
variables given in~\eqref{eqn83}, and the sum has a Poisson
number $m^*$ of terms with large mean, it is easy to see that
for any $c_8 \in(c_7,1)$ and $p_2>0$ there exist $b_1$ and
$\eps_0$ such that for $b\geq b_1$ and $\eps_*\leq\eps_0$,
%
%
\begin{eqnarray}
\label{eqn84} &&\P^{x_0} \biggl(\sum_{1 \leq k \leq m^*}
\bigl\llvert \log \bigl| \pi_{x^*_{k}} \circ\cdots\circ\pi_{x^*_0} (
\bv_0) \bigr| - \log\bigl | \pi_{x^*_{k-1}} \circ\cdots\circ
\pi_{x^*_0} (\bv_0) \bigr| \bigr\rrvert \bone_{F_k}
\nonumber
\\[-8pt]
\\[-8pt]
\nonumber
&&\hspace*{272pt}{} \geq
c_8 b \biggr) \leq p_2.
\end{eqnarray}

A similar argument based on the strong Markov property applied
at times $s_k$ and the optional stopping theorem for
submartingales, combined with \eqref{n59}, gives
%
%
\begin{equation}
\label{eqn88} \qquad\P^{x_0} \biggl( \bigcup_{1\leq k \leq m^*}
F^c_k \biggr) \leq\E^{x_0} m^* \exp\bigl( -
c_1 \eps_*^{\alpha-1}\bigr) \leq c_9 b \exp\bigl(
- c_1 \eps_*^{\alpha-1}\bigr)\eps_*^{-1}.
\end{equation}

\textit{Step} 2.
We will use a result from a different paper to show that the discrete
approximation of the distance between $X$ and $Y$ employed in the
previous step is sufficiently accurate
for our purposes.

Recall the notation from Section~\ref{secdiffrbm}. We copy below \eqref{EM311}--\eqref{M316}
because these estimates are crucial to the present argument.
Fix an arbitrarily small $c_{10}>0$. There exist
$c_{11},c_{12},c_{13},\eps_0>0$, $\beta_1 \in(1,4/3)$ and
$\beta_2 \in(0, 4/3 - \beta_1)$ such that if $X_0=x$, $Y_0=y$,
$|x-y| =\eps< \eps_0$ and $\eps_* = c_{11} \eps$, then
%
%
\begin{equation}
\label{eqn75} \bigl|(Y_{\sigma'_b} - X_{\sigma'_b}) - \I_{m^*} \circ
\cdots\circ\I_0 (Y_{0} - X_{0})\bigr| \leq|
\Lambda| + \Xi,
\end{equation}
where $|\Lambda| < c_{10} \eps$, $\P^{x,y}$-a.s., and
%
%
\begin{equation}
\label{eqn76} \P^{x,y}\bigl(|\Xi| > c_{12}
\eps^{\beta_1}\bigr) \leq c_{13} \eps^{\beta_2}.
\end{equation}

We have
%
%
\begin{eqnarray}
\label{eqn86} && \log\bigl\llvert \pi_{x^*_{m^*}} \circ\cdots\circ
\pi_{x^*_0} (Y_{0} - X_{0}) \bigr\rrvert
\nonumber\\
&&\qquad= \sum_{1 \leq k \leq m^*} \bigl( \log\bigl\llvert
\pi_{x^*_{k}} \circ\cdots\circ\pi_{x^*_0} (Y_{0} -
X_{0}) \bigr\rrvert
\nonumber
\\[-8pt]
\\[-8pt]
\nonumber
&&\hspace*{68pt}{}- \log\bigl\llvert \pi_{x^*_{k-1}} \circ
\cdots\circ\pi_{x^*_0} (Y_{0} - X_{0}) \bigr
\rrvert \bigr)
\\
&&\qquad\quad{} + \log\bigl|\pi_{x^*_0} (Y_{0} - X_{0})\bigr|.\nonumber
\end{eqnarray}
Note that $x_0^* = x_0$.
It follows from \eqref{eqn710} that
%
%
\begin{equation}
\label{eqn810}\quad  \log\eps-\log\bigl|\pi_{x^*_0} (Y_{0} -
X_{0})\bigr| =\log\eps- \log\bigl|\pi_{x_0} (y_{0} -
x_{0})\bigr| \leq c_{14} \eps^{2\beta_1}.
\end{equation}
We combine this with \eqref{eqn84}, \eqref{eqn88} and
\eqref{eqn86} to see that for any $c_{15} \in(c_7,1)$ and
$p_2>0$, there exists $b_2$ such that for any $b\geq b_2$, there exists
$\eps_1>0$ such that for $\eps\leq\eps_1$,
%
%
\begin{equation}
\label{eqn85}\quad  \P \bigl(\bigl\llvert \log \bigl| \pi_{x^*_{m^*}} \circ\cdots\circ
\pi_{x^*_0} (Y_{0} - X_{0}) \bigr| -
\log|Y_0 - X_0| \bigr\rrvert \geq c_{15} b
\bigr) \leq p_2.
\end{equation}
A special case of \eqref{n512} is
\[
\I_{m^*} \circ\cdots\circ\I_0 (Y_{0} -
X_{0}) =\exp \bigl( L^X_{\sigma'_b} \bigr)
\pi_{x^*_{m^*}} \circ\cdots\circ\pi_{x^*_0} (Y_{0} -
X_{0}).
\]
This implies that
\[
\log\bigl\llvert \I_{m^*} \circ\cdots\circ\I_0
(Y_{0} - X_{0})\bigr\rrvert = L^X_{\sigma'_b}
+ \log\bigl\llvert \pi_{x^*_{m^*}} \circ\cdots\circ\pi_{x^*_0}
(Y_{0} - X_{0}) \bigr\rrvert.
\]

Recall that $|X_0-Y_0|=|x_0-y_0|=\eps$.
On the event $\{\sigma'_b = \sigma^X_b\}$ we have
$L^X_{\sigma'_b} = b$ so, in view of
\eqref{eqn810} and \eqref{eqn85},
for any $c_{16} \in(c_{15},1)$, $c_{17} = 1 - c_{16} >0$ and
$p_3>0$, there exists $b_3$ such that for any $b\geq b_3$, there exists
$\eps_2>0$ such that for $\eps\leq\eps_2$,
%
%
\begin{eqnarray}
\label{es224} \quad&&\P^{x_0,y_0} \bigl( \log\bigl\llvert \I_{m^*} \circ
\cdots\circ\I_0 (Y_{0} - X_{0})\bigr\rrvert
-b -\log\eps\leq-c_{16} b \mbox{ and } \sigma'_b
= \sigma^X_b \bigr)
\nonumber
\\
&&\qquad= \P^{x_0,y_0} \bigl( \log\bigl\llvert \I_{m^*} \circ\cdots\circ
\I_0 (Y_{0} - X_{0}) \bigr\rrvert
\leq(1-
c_{16}) b + \log\eps\nonumber\\
&&\hspace*{225pt} \mbox{and } \sigma'_b
= \sigma^X_b \bigr)
\\
&&\qquad= \P^{x_0,y_0} \bigl( \bigl\llvert \I_{m^*} \circ\cdots\circ
\I_0 (Y_{0} - X_{0}) \bigr\rrvert 
\exp(c_{17} b) \mbox{ and } \sigma'_b =
\sigma^X_b \bigr) \nonumber\\
&&\qquad\leq p_3.\nonumber
\end{eqnarray}
Recall from \eqref{eqn75} that we can assume that $|\Lambda|
\leq c_{10} \eps$, a.s.
It follows from \eqref{eqn76} that for small $\eps$,
$\P(|\Xi| \geq c_{10} \eps) < p_3$. These remarks and \eqref{es224}
imply that
\begin{eqnarray*}
&&\P^{x,y} \bigl( \bigl\llvert \I_{m^*} \circ\cdots\circ
\I_0 (Y_{0} - X_{0})\bigr\rrvert -|\Lambda|
-|\Xi| \leq\eps\bigl(\exp(c_{17} b)-2c_{10}\bigr) \mbox{
and } \sigma'_b = \sigma^X_b
\bigr)\\
&&\qquad \leq2 p_3.
\end{eqnarray*}
We combine this estimate with \eqref{eqn75} to see that
%
%
\begin{eqnarray}
\label{es232} \qquad&&\P^{x_0,y_0} \bigl( \bigl\llvert (Y_{\sigma^X_b} -
X_{\sigma^X_b})\bigr\rrvert \leq\eps\bigl(\exp(c_{17}
b)-2c_{10}\bigr) \mbox{ and } \sigma'_b =
\sigma^X_b \bigr)
\nonumber
\\[-8pt]
\\[-8pt]
\nonumber
&&\qquad=\P^{x_0,y_0} \bigl( \bigl\llvert (Y_{\sigma'_b} - X_{\sigma'_b})
\bigr\rrvert \leq\eps\bigl(\exp(c_{17} b)-2c_{10}\bigr)
\mbox{ and } \sigma'_b = \sigma^X_b
\bigr) \leq2 p_3.
\nonumber
\end{eqnarray}
We choose large $b_4$ so that for $b\geq b_4$, $c_{18} =
c_{18}(b):= \exp(c_{17} b)-2c_{10}>1$.
We can now write \eqref{es232} as
%
%
\begin{eqnarray}
\label{july191}&& \P^{x_0,y_0} \bigl( \bigl\llvert (Y_{\sigma'_b} -
X_{\sigma'_b})\bigr\rrvert \leq c_{18}\eps \mbox{ and }
\sigma'_b = \sigma^X_b
\bigr)
\nonumber
\\[-8pt]
\\[-8pt]
\nonumber
&&\qquad=\P^{x_0,y_0} \bigl( \bigl\llvert (Y_{\sigma^X_b} - X_{\sigma^X_b})
\bigr\rrvert \leq c_{18}\eps \mbox{ and } \sigma'_b
= \sigma^X_b \bigr) \leq2 p_3.
\end{eqnarray}
Recall that $x_0 \in\prt D$, $y_0\in\ol D$, and
let $T' = \inf\{t\geq0\dvtx |X_t| = |Y_t|\}$. Note that the
distributions of $\{(X_t,Y_t), t\geq T'\}$ and
$\{(Y_t,X_t), t\geq T'\}$ are symmetric. Moreover, $Y_t \notin\prt D$
for $t < T'$
and, therefore, $L^Y_{T'} = 0$. It follows from this and \eqref
{july191} that
\begin{eqnarray*}
&&\P^{x_0,y_0} \bigl( \bigl\llvert (Y_{\sigma'_b} - X_{\sigma'_b})
\bigr\rrvert \leq c_{18}\eps, T' \leq
\sigma'_b \mbox{ and } \sigma'_b
= \sigma^Y_b \bigr)
\\
&&\qquad= \P^{x_0,y_0} \bigl( \bigl\llvert (Y_{\sigma'_b} - X_{\sigma'_b})
\bigr\rrvert \leq c_{18}\eps, T' \leq
\sigma'_b \mbox{ and } \sigma'_b
= \sigma^X_b \bigr) \leq2 p_3
\end{eqnarray*}
and
%
%
\begin{eqnarray}
\label{july193} &&\P^{x_0,y_0}  \bigl( \bigl\llvert (Y_{\sigma'_b} -
X_{\sigma'_b})\bigr\rrvert \leq c_{18}\eps \bigr)
\nonumber\\
&&\qquad\leq \P^{x_0,y_0} \bigl( \bigl\llvert (Y_{\sigma'_b} -
X_{\sigma'_b})\bigr\rrvert \leq c_{18}\eps, \mbox{ and }
\sigma'_b = \sigma^X_b
\bigr)
\\
&&\qquad\quad{} + \P^{x_0,y_0} \bigl( \bigl\llvert (Y_{\sigma'_b} -
X_{\sigma'_b})\bigr\rrvert \leq c_{18}\eps, T' \leq
\sigma'_b \mbox{ and } \sigma'_b
= \sigma^Y_b \bigr) \leq4 p_3.
\nonumber
\end{eqnarray}
Let $c_{19} = \log
c_{18}>0$. Then
\[
\P^{x_0,y_0}  ( V_1 - V_0\leq c_{19}
) = \P^{x_0,y_0} \bigl( \log\bigl\llvert (Y_{\sigma'_b} -
X_{\sigma
'_b})\bigr\rrvert - \log\eps\leq\log c_{18} \bigr) \leq4
p_3.
\]
Since $p_3>0$ is arbitrarily small and $c_{19} >0$, the lemma is proved.
\end{pf}

The following lemma estimates the distribution of the increment of the
logarithm of the distance between $X$ and $Y$. The assertion of the
lemma has two parts. One part says that the distribution is close to
the distribution of an integrable random variable. The other part shows
the that error of approximation is small in an appropriate sense.
Recall notation from \eqref{eqstdnot}.

%
\begin{lemma}\label{lemn81}
For any $\beta_1 \in(0,1/2)$ there exist
$\beta_2,b,c_1,\eps_1>0$ and a cumulative distribution function
$G\dvtx \R\to[0,1]$ satisfying $\int_{-\infty}^\infty|a| \,dG(a)
< \infty$ and such that if $\eps\leq\eps_1$, $x_0 \in\prt
D$, $y_0\in\ol D$, $|x_0 - y_0| = \eps$, $X_0 = x_0$,
$Y_0=y_0$ and
%
%
\begin{equation}
\label{eqd115} \frac{ \llvert   \< y_0 - x_0,
\n (x_0 ) \> \rrvert  } {
\llvert  y_0-x_0 \rrvert } \leq\eps^{\beta_1},
\end{equation}
then there exists an event $F$ such that
%
%
\begin{eqnarray}
\label{eqn201} \P^{x_0,y_0}\bigl(F^c\bigr) &\leq&
c_1 \eps^{\beta_2},
\\
\P^{x_0,y_0} \bigl(| V_1- V_0|\bone_{F}
\leq a \bigr) &\leq& G(a),\qquad  a\in \R. \label{eqn202}
\end{eqnarray}
\end{lemma}

\begin{pf}
\textit{Step} 1.
This step is devoted to a review of upper bounds on the rate of growth
of the distance between $X$ and $Y$. It also contains a list of
definitions (notation) used throughout the rest of the proof.

Fix $b$ as in Lemma \ref{lemo303}, some $\beta_3$ and $\beta_4$ such
that $\beta_1 < \beta_3 < \beta_4 < 1/2$, and consider the condition
%
%
\begin{equation}
\label{eqj201} \frac{ \llvert   \< y_0 - x_0,
\n (x_0 ) \> \rrvert  } {
\llvert  y_0-x_0 \rrvert } \leq c_2 \eps^{\beta_4},
\end{equation}
where $c_2 = 200\cdot2 ^{\beta_4}$. Note that $c_2 \eps^{\beta_4} <
\eps^{\beta_1}$ for small $\eps>0$.
It follows from \eqref{july193} that for some
$p_1\in(0,1)$, $\eps_1>0$ and $c_3=c_3(b)>0$, if $|x_0 -
y_0|=\eps\leq\eps_1$ and either \eqref{eqd115} or \eqref{eqj201}
holds, then
%
%
\begin{equation}
\label{eqn2230} \P^{x_0,y_0} \bigl( \llvert Y_{\sigma'_b} -
X_{\sigma'_b}\rrvert \leq c_3\eps \bigr) \leq p_1.
\end{equation}

Lemma 3.4 of \cite{B3} and its proof show that there exists
$c_4>0$ such that for all $x,y\in\ol D$ and $t\geq0$, we have $\P
^{x,y}$-a.s.,
%
%
\begin{equation}
\label{eqn181} |X_{t} - Y_{t} | \leq\exp
\bigl(c_4 \bigl(L^X_t + L^Y_t
\bigr) \bigr) |x-y|.
\end{equation}
By the Markov property, for any fixed $t,s\geq0$, a.s.,
%
%
\begin{equation}
\label{j271} |X_{t+s} - Y_{t+s} | \leq\exp
\bigl(c_4 \bigl(L^X_{t+s} -
L^X_t + L^Y_{t+s} -
L^Y_t\bigr) \bigr) |X_{t} - Y_{t}
|.
\end{equation}
Since the last formula holds for all rational $t,s\geq0$
simultaneously, a.s., and $X$ and $Y$ are continuous, the inequality
actually holds for all random times $t,s\geq0$ (not necessarily
stopping times).
We obtain from \eqref{eqn181},
%
%
\begin{equation}
\label{eqn212} \inf_{0 \leq t \leq\sigma'_b } |X_t - Y_t|
\geq\exp(-2c_4 b) |X_{\sigma'_b} - Y_{\sigma'_b}|.
\end{equation}

Let $c_5 = \exp(2c_4 b)$ and $c_6 =c_3 c_5^{-1}$. It follows from
\eqref{eqn2230} and \eqref{eqn212} that
%
%
\begin{equation}
\label{eqn1113} \P^{x_0,y_0} \Bigl( \inf_{0\leq t \leq\sigma'_b} \llvert
Y_{t} - X_{t}\rrvert \leq c_6 \eps \Bigr)
\leq p_1,
\end{equation}
and for any random time $T \in[0, \sigma'_b]$,
%
%
\begin{equation}
\sup_{T\leq t \leq\sigma'_b} \llvert Y_{t} - X_{t}
\rrvert \leq c_5 |Y_T - X_T|,\qquad
\P^{x_0,y_0}\mbox{-a.s.} \label{eqd21}
\end{equation}
In particular,
%
%
\begin{equation}
\label{j203} \sup_{0\leq t \leq\sigma'_b} \llvert Y_{t} -
X_{t}\rrvert \leq c_5 \eps,\qquad \P^{x_0,y_0}
\mbox{-a.s.}
\end{equation}

We set $c_7 = (-1 - 2 c_4 b) \land\log c_6$ and $c_8 =
e^{c_7}$. Hence,
%
%
\begin{equation}
\label{a111} c_8 c_5 \leq\exp(-1 - 2
c_4 b + 2 c_4 b) < 1/2.
\end{equation}
Obviously, \eqref{eqn1113} implies that, assuming that either \eqref
{eqd115} or \eqref{eqj201} holds,
%
%
\begin{equation}
\label{eqn1114} \P^{x_0,y_0} \Bigl( \inf_{0\leq t \leq\sigma'_b} \llvert
Y_{t} - X_{t}\rrvert \leq c_8\eps \Bigr)
\leq p_1.
\end{equation}

Let $U_0 = 0$ and
\[
S_1 = \inf\{t\geq0\dvtx M_t - M_0 \leq
c_7\} = \inf\bigl\{t\geq0\dvtx |X_t - Y_t| \leq
c_8 |X_0 - Y_0|\bigr\}.
\]
Here and later, $\inf\varnothing= \infty$. Note that at
least one of the processes $X$ and $Y$ must belong to $\prt D$
at time $S_1$.
We proceed by induction.
First assume that $X_{S_k}\in\prt D$.
Let $z_k \in\prt D$ be the point such that
$\bn(z_k) = \frac{Y_{S_k} - X_{S_k}}{|Y_{S_k} - X_{S_k}|}$, and
for some $c_9>0$ (to be specified later) and $k\geq1$, let
\begin{eqnarray*}
U_k &=& \inf\{t\geq S_k\dvtx Y_t \in\prt D\},
\\
S_{k} &=& \inf\{t\geq U_{k-1}\dvtx M_t -
M_{U_{k-1}} \leq c_7\} \\
&=& \inf\bigl\{ t\geq U_{k-1}\dvtx
|X_t - Y_t| \leq c_8 |X_{U_{k-1}} -
Y_{U_{k-1}}| \bigr\},
\\
F_k &=& \bigl\{S_k < \sigma'_b
\bigr\},
\\
\G_k &=& \sigma( B_t, t\leq U_k),
\\
J_k &=& n \in\Z\mbox{ such that } 2^{-n}
\leq|X_{S_k} -z_k| < 2^{-n+1},
\\
d_k & =& |X_{U_{k-1}} - Y_{U_{k-1}}| \qquad \bigl(\mbox{hence, }
d_0 = |X_0 - Y_0| = \eps\bigr),
\\
I_k &=& \bigl\{2^{-J_k} \geq d_k^{\beta_3}
\bigr\},
\\
S^*_k &=& \inf \bigl\{t\geq S_k\dvtx |X_t -
X_{S_k}| \geq d_k^{\beta
_4} \bigr\},
\\
C_k &=& \bigl\{U_k \leq S^*_k \bigr\},
\\
G_k &= &\bigl\{ |X_{U_k}- Y_{U_k}| \geq
c_9 2^{-J_k} d_k \bigr\},
\\
K_k&=& \biggl\{\frac{ \llvert   \< X_{U_k}- Y_{U_k},
\n (Y_{U_k} ) \> \rrvert  } {
\llvert  X_{U_k}- Y_{U_k} \rrvert } \leq c_2
|X_{U_{k}} - Y_{U_{k}}|^{\beta_4} \biggr\},
\\
A_k &= &F_k \cap I_k \cap C_k
\cap G_k \cap K_k,
\\
A^+_k &= &\bigcap_{j\leq k}
A_j.
\end{eqnarray*}
If $X_{S_k}\notin\prt D$, then we must have $Y_{S_k}\in\prt D$, and
we apply all the above definitions with the roles of $X$ and $Y$
interchanged. In the rest of the proof, we will discuss only the case
when $X_{S_k}\in\prt D$. Our arguments hold in the other case by symmetry.

\textit{Step} 2. In this step we will prove that, for some $c_{10},
c_{11}<\infty$, $k\geq1$ and $m$ such that $2^{-m} \geq
d_k^{\beta_3}$, on $A^+_{k-1}$,
%
%
\begin{eqnarray}
\label{eqn1120} \P\bigl( A^c_k \cap F_k
\mid\G_{k-1}\bigr) &\leq& c_{10} d_k^{\beta_3},
\\
\P\bigl(\{J_k \geq m\} \cap F_k \mid\G_{k-1}
\bigr) &\leq& c_{11} 2^{-m}.\label {eqn188}
\end{eqnarray}
Informally speaking, we will show that some events are unlikely. Given
that they do not happen, we will find good estimates for the distance
between $X$ and $Y$. This step is subdivided into further substeps
because we have to analyze several families of ``unusual'' events and
show that they all have small probabilities. The first substep will
show that ``long'' excursions are unlikely.

\textit{Step} 2.1.
Let $U'_k = S^*_k \land U_k$.
Note that $U'_k = U_k$ on $C_k$ and $U'_k= S^*_k$ on $C^c_k$.
If $F_k$ holds, then $d_k \leq c_5 \eps$, by \eqref{j203}.
By the definition of $S_k$, $|X_{S_k} - Y_{S_k}| = c_8 d_k$,
if $S_k < \infty$. We have assumed that $X_{S_k} \in\prt D$ so
$\dist(Y_{S_k}, \prt D) \leq c_8 d_k $.
We apply Lemma 3.2 of
\cite{BCJ} to the process $Y$ at the stopping time $S_k$ to see that
for some $c_{12}>0$,
%
%
\begin{equation}
\label{j211} \P\bigl( |Y_{S_k} - Y_{U'_k}| \geq
d_k ^{\beta_4} /3\bigr) \leq c_{12}
d_k^{1-\beta_4}.
\end{equation}

We will show that if $\eps_1 >0$ is sufficiently small and $\eps<
\eps
_1$, then $L^X_{U'_k} - L^X_{S_k} \leq3(1+c_5) d_k^{\beta_4}$. Suppose
that the last inequality does not hold, and let $T^*_k = \inf\{t \geq S_k\dvtx L^X_{t} - L^X_{S_k} = 3(1+c_5) d_k^{\beta_4}\}$.
Then by assumption we have $T^*_k \leq U'_k \leq S^*_k$.
Assuming that $F_k$ holds and using \eqref{j203},
%
%
\begin{equation}
\label{j224} |X_{S_k} - Y_{S_k}| = c_8
d_k \leq c_5 \eps.
\end{equation}
Hence, if $\eps$ is sufficiently small, then $c_8 d_k \leq d_k^{\beta
_4}/3$ and, therefore,
%
%
\begin{equation}
\label{j221} |X_{S_k} - Y_{S_k}| \leq d_k^{\beta_4}/3.
\end{equation}
It follows from the definitions of $U_k$ and $U'_k$ that $L^Y_{U'_k} -
L^Y_{S_k} =0$.
If $\eps$ is small then $d_k$ is small and $3(1+c_5) d_k^{\beta_4} < b
\land1/100$. So the definition of $T^*_k$, \eqref{j221} and~\eqref
{eqn181} imply that
%
%
\begin{equation}
\label{j222} |X_{T^*_k} - Y_{T^*_k}| \leq c_5
d_k^{\beta_4}.
\end{equation}
For all $t\in[S_k, T^*_k]\subset[S_k, S^*_k]$ such that $X_t \in\prt D$,
the angle between $\n(X_t)$ and $\n(X_{S_k})$ is less than $2
d_k^{\beta
_4}$. It follows that the angle between $\int_{S_k}^{T^*_k}
\n(X_t) \,dL^X_t$ and $\n(X_{S_k})$ is also smaller than $2 d_k^{\beta_4}
< 1/50$. Moreover, the length of\break $\int_{S_k}^{T^*_k} \n(X_t) \,dL^X_t$ is
greater than $2 (1+c_5) d_k^{\beta_4}$.
Recall that $Y_t \notin\prt D$ for $t\in[S_k, T^*_k]$. Thus $\int_{S_k}^{T^*_k} \n(Y_t)
\,dL^Y_t=0$ and, therefore,
\[
X_{T^*_k} - Y_{T^*_k} = X_{S_k} - Y_{S_k} +
\int_{S_k}^{T^*_k} \n(X_t)
\,dL^X_t.
\]
This relation, the fact that $X_{S_k}\in\prt D$, \eqref{j221},
\eqref
{j222} and our observations about the direction and length of $\int_{S_k}^{T^*_k} \n(X_t) \,dL^X_t$ imply that $Y_{T^*_k}$ must be at least
$(1+c_5) d_k^{\beta_4}$ units inside the ball $\B(0,1)$. This is
impossible so the claim that $L^X_{U'_k} - L^X_{S_k} \leq3(1+c_5)
d_k^{\beta_4}$ is proved.

Recall that, assuming that $F_k$ holds, $c_8 d_k \leq c_5 \eps$. Hence,
if $\eps_1>0$ is sufficiently small and $\eps< \eps_1$, then $3(1+c_5)
d_k^{\beta_4} < b$.
Since $L^X_{U'_k} - L^X_{S_k} \leq3(1+c_5) d_k^{\beta_4}$, $L^Y_{U'_k}
- L^Y_{S_k} =0$
and $|X_{S_k} - Y_{S_k}| = c_8 d_k$, we have by \eqref{j271}, for all
$t\in[S_k, U'_k]$,
%
%
\begin{equation}
\label{j2210} |X_{t} - Y_{t}| \leq c_5
c_8 d_k < d_k^{\beta_4}/3.
\end{equation}
In particular, $|X_{U'_k} - Y_{U'_k}| < d_k^{\beta_4}/3$.
This, the definitions of $S_k^*$ and $U'_k$ and \eqref{j221} imply
that, assuming that $C_k$ does not hold,
$|Y_{S_k} - Y_{S^*_k }| = |Y_{S_k} - Y_{U'_k }| \geq d_k ^{\beta_4} /3$.
This and \eqref{j211} imply that, on $A^+_{k-1}$,
%
%
\begin{equation}
\label{eqn211} \P\bigl( C^c_k \cap F_k \mid
\G_{k-1}\bigr) \leq c_{13} d_k^{1-\beta_4}.
\end{equation}

We record, for future reference, the following variants of \eqref
{j2210}. If $C_k\cap F_k$ holds, then $U'_k = U_k$ and for any random
time $R \in[S_k, U_k]$
and all $t\in[R, U_k]$,
%
%
\begin{equation}
\label{j272} |X_{t} - Y_{t}| \leq c_5
c_8 |X_{R} - Y_{R}|.
\end{equation}
It follows from \eqref{j271} that if $C_k\cap F_k$ holds, then for all
$t\in[U_{k-1}, U_k]$,
%
%
\begin{equation}
\label{20121} |X_{t} - Y_{t}| \leq c_5
|X_{U_{k-1}} - Y_{U_{k-1}}| = c_5 d_k.
\end{equation}
Since $|X_{S_k} - Y_{S_k}| = c_8 d_k$, if $F_k$ holds, then we have by
\eqref{j271} and \eqref{a111}, for all $t\in[S_k, \sigma'_b]$,
%
%
\begin{equation}
\label{a31} |X_{t} - Y_{t}| \leq c_5
c_8 d_k < d_k/2.
\end{equation}

\textit{Step} 2.2.
The intuitive meaning of the technical estimate in this step is that if
the vector between $X$ and $Y$ is close to the normal to $\prt D$,
then $Y$ must have traveled a long distance since it last visited $\prt D$.

Assume that $F_k$ holds.
Let $U^*_k = \sup\{t< U_k\dvtx Y_t \in\prt D\}$ and
$\wt U_k = U_{k-1} \lor U^*_k$. It is easy to see
that, a.s., $U^*_k \leq\wt U_k < S_k < U_k$, for $k\geq2$. (We will
limit our discussion to the case $k\geq2$; the case $k=1$
requires minor modifications so we omit the proof.) Random
times $U^*_k $ and $ U_k$ are the endpoints of an excursion of
$Y$ from $\prt D$.
Suppose that $J_k\geq m$ and $2^{-m} \geq
d_k^{\beta_3}$. By \eqref{eqd21}, $d_k \leq c_5 \eps$ so, assuming
that $\eps_1 >0$ is small and $\eps\leq\eps_1$, we have $c_8 d_k
\leq
d_k^{\beta_3} \leq2^{-m}$. We have $|X_{S_k} -z_k| \leq2^{-m+1}$
and, using
\eqref{j224},
%
%
\begin{equation}
\label{eqn2210}\qquad  |Y_{S_k} -z_k| \leq|X_{S_k}
-z_k| + |X_{S_k} - Y_{S_k} | \leq2^{-m+1}
+ c_8 d_k \leq2^{-m+2}.
\end{equation}

Suppose that $\sup_{\wt U_k \leq t \leq S_k, X_t \in\prt D} |Y_t
- z_k| \leq c_{14}:=1/400$. We will show that this assumption
leads to a contradiction. The assumption and \eqref{j203}
imply that, for small $\eps$, $\sup_{\wt U_k \leq t \leq S_k, X_t
\in\prt D} |X_t - z_k| \leq2 c_{14}$. This in turn implies
that for all $t\in[\wt U_k, S_k]$ such that $X_t \in\prt D$,
the angle between $\n(X_t)$ and $\n(z_k)$ is less than $4
c_{14}$. It follows that the angle between $\int_{\wt U_k} ^{S_k}
\n(X_t) \,dL^X_t$ and $\n(z_k)$ is also smaller than $4 c_{14}$.
Note that $Y_t \notin\prt D$ for $t\in[\wt U_k, S_k]$ by the
definition of $\wt U_k$. Thus $\int_{\wt U_k}^{S_k} \n(Y_t)
\,dL^Y_t=0$ and, therefore,
%
%
\begin{equation}
\label{eqn2211} X_{S_k} - Y_{S_k} = X_{\wt U_k} -
Y_{\wt U_k} + \int_{\wt U_k}^{S_k} \n
(X_t) \,dL^X_t.
\end{equation}
Recall that $X_{S_k} - Y_{S_k}$ is a positive multiple of
$-\n(z_k)$ and $X_{S_k} \in\prt D$. Assume that $K_{k-1}$ holds.
If $\wt U_k = U^*_k$, then $Y_{\wt U_k} \in\prt D$.
Next consider the case $\wt U_k > U^*_k$.
In this case, $X_{U_{k-1}} \in\prt D$ and, assuming that $\eps>0$ is
small, the vector $Y_{U_{k-1}} - X_{U_{k-1}}$
is almost orthogonal to $\n(X_{U_{k-1}})$. More precisely, $K_{k-1}$
implies that $\dist(Y_{U_{k-1}}, \prt D) \leq2 c_2 d_{k}^{1 + \beta_4}$.
These observations and the fact that the angle between $\int_{\wt U_k} ^{S_k}
\n(X_t) \,dL^X_t$ and $\n(z_k)$ is smaller than $4 c_{14}$ show
that \eqref{eqn2211} cannot be true; see Figure~\ref{figearth1}.
This contradiction
implies that $\sup_{\wt U_k \leq t \leq S_k, X_t \in\prt D} |Y_t -
z_k| \geq c_{14}$.
We combine this with~\eqref{eqn2210} to
see that $\sup_{\wt U_k \leq t \leq S_k, X_t \in\prt D} |Y_t -
Y_{S_k}| \geq c_{15}:= c_{14}/2$, for some $m_1$ and all
$m\geq m_1$. Suppose that $s_1$ is such that $\wt U_k \leq s_1
\leq S_k, X_{s_1} \in\prt D$ and $|Y_{s_1} - Y_{S_k}| \geq
c_{15}$. Then either $|Y_{\wt U_k} - Y_{S_k}| \geq c_{15}/2$ or
$|Y_{\wt U_k} - Y_{s_1}| \geq c_{15}/2$. Since $X_{S_k} \in\prt
D$, it follows that there exists $s_2$ such that $\wt U_k \leq
s_2 \leq S_k, X_{s_2} \in\prt D$ and $|Y_{\wt U_k} - Y_{s_2}|
\geq c_{15}/2$. We record this for future reference. There exists $m_1$
such that if $m\geq m_1$, $F_k \cap K_{k-1}$ holds, $J_k\geq m$ and
$2^{-m} \geq
d_k^{\beta_3}$, then
%
%
\begin{equation}
\label{eqd22} \sup_{\wt U_k \leq t \leq S_k, X_t \in\prt D} |Y_t - Y_{\wt U_k}|
\geq c_{15} /2.
\end{equation}

%
\begin{figure}

\includegraphics{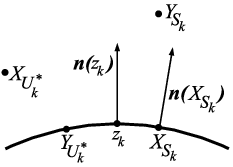}

\caption{In analysis of possible locations of $X$ and $Y$, we can
ignore Brownian oscillations because they are common to
both $X$ and $Y$ and hence do not affect their relative position.
Consider the case $\wt U_k = U^*_k$.
On the interval $[U^*_k,S_k]$, only $X$ gets ``local time push''
on $\prt D$ because $Y$ does not visit $\prt D$ between these times.
The direction of the push is always close to $\n(z_k)$. The picture
represents an impossible configuration---it is impossible
for $X_{U_k^*}$ to be ``above'' $Y_{U_k^*}$ and for $X_{S_k}$
to be ``below'' $Y_{S_k}$ if $X$ is pushed in the ``upward''
direction between times $U^*_k$ and $S_k$.}
\label{figearth1}
\end{figure}

\textit{Step} 2.3.
We will show that if $Y$ comes close to $\prt D$, then it is not likely
to hit $\prt D$ far from this point.

Assume that $C_k \cap F_k$ holds.
Recall notation related to excursions from Section~\ref{secexc}.
We will apply excursion theory to excursions of the Markov process
$(Y,X)$ from $\prt D \times\ol D$. From the intuitive point of view,
the exit system representing these excursions is equivalent to the exit
system for excursions of $Y$ from $\prt D$. We use the ``richer''
version of excursion theory so that we can discuss the relationship of
excursions of $Y$ and the process $X$. We will use the same notation
$H^z$ for excursion laws of the process $(Y,X)$ as for excursion laws
of the process $Y$ since the two families of excursion laws can be
clearly identified with each other.
All estimates of $H^z$-measures of events given in this proof hold
uniformly in $z\in\prt D$, so we will write $H^\cdot$ for such
uniform bounds.

Consider an arbitrary $c_{16} \in
(0,c_{15}/2)$, and for the moment, consider $d_k$ a fixed number. In
the following definitions, $e$ will represent excursions of the second
component of $(X,Y)$ from $\prt D$. Let
\begin{eqnarray*}
T & =& \inf\bigl\{t\geq0\dvtx \dist\bigl(e(t), \prt D\bigr) \leq
c_5 d_k, \bigl|e(0) - e(t)\bigr| \geq c_{16},
X_t \in\prt D\bigr\},
\\
\wt A&=& \Bigl\{T < \zeta, \sup_{T < t < \zeta} \bigl|e(\zeta-) - e(t)\bigr| \geq
d_k^{\beta_4}/4 \Bigr\}.
\end{eqnarray*}
An application of Lemma 3.2 of \cite{BCJ} and \eqref{eqM52}
give
%
%
\begin{equation}
\label{eqn2212} H^\cdot(T< \zeta) \leq H^\cdot \Bigl(\sup
_{0<t<\zeta}\bigl|e(t) - e(0)\bigr| \geq c_{16} \Bigr) \leq
c_{17}.
\end{equation}
Another application of Lemma 3.2 of \cite{BCJ} and the strong
Markov property applied at the stopping time $T$ yield
\[
H^\cdot(\wt A \mid T < \zeta) = H^\cdot \Bigl(\sup
_{T < t < \zeta}\bigl |e(\zeta-) - e(t)\bigr| \geq d_k^{\beta_4}/4
\mid T<\zeta \Bigr) \leq c_{18} d_k ^{1-\beta_4}.
\]
We combine this and \eqref{eqn2212} to see that
%
%
\begin{equation}
\label{eqn2213} H^\cdot(\wt A) \leq c_{19}
d_k^{1-\beta_4}.
\end{equation}
Now we go back to the original definition of $d_k$---we treat it again
as a random variable. Note that $t \to|X_t - Y_t|$ is a predictable
process, so\break $\sum_k d_k \bone_{t\in(U_{k-1}, U_k]}$ is a predictable process.
This, \eqref{eqn2213} and the exit system formula \eqref{exitsyst}
imply that the
probability that there exists an excursion of $Y$ belonging to
the set $\wt A$ and starting in the time interval $[U_{k-1}, U_k \land
\sigma'_b]$ is less than $b c_{19} d_k^{1-\beta_4}$.

Assume that $J_k = m$ for some $m\geq m_1$ such that $2^{-m} \geq
d_k^{\beta_3}$. Recall that we have assumed that $C_k \cap F_k$ holds.
Let
\begin{eqnarray*}
S_k^1 &=& \inf \bigl\{t\geq\wt U_k\dvtx
X_t \in\prt D, |Y_t - Y_{\wt U_k}| \geq
c_{16} \bigr\},
\\
S^+_k &=& \inf \bigl\{t\geq S^1_k\dvtx
|X_t - X_{S^1_k}| \geq d_k^{\beta_4}/2 \bigr
\},
\\
C_k^1 &=& \bigl\{U_k \leq S^+_k
\bigr\}.
\end{eqnarray*}
Note that $S_k^1 \leq S_k \leq U_k$ because of
\eqref{eqd22} (recall that $c_{16} < c_{15}/2$).
This implies that $S^+_k \leq S^*_k$.

If $(C^1_k)^c$ holds, then $| X_{S^+_k} - Y_{S^+_k}| \leq
c_5 d_k $, by \eqref{j2210}, \eqref{20121} and the fact that $S^+_k
\leq S^*_k$.
Under the same assumptions, we also have $| X_{S^1_k} - Y_{S^1_k}| \leq
c_5 d_k $ because $S^1_k \leq S^+_k$.
Suppose that $\eps_1>0$ is so small that for $\eps<\eps_1$ we have
$c_5 d_k <
d_k^{\beta_4}/8$. Then $| Y_{S^1_k} - Y_{S^+_k}| \geq
d_k^{\beta_4}/4$, by the definition of $S^+_k$ and the triangle inequality.

Suppose that $\wt U_k = U^*_k$.
Since $| Y_{S^1_k} - Y_{S^+_k}| \geq
d_k^{\beta_4}/4$, the excursion of $Y$ starting at $U^*_k$ belongs to
the set $\wt A$. We have proved that the
probability that there exists an excursion of $Y$ belonging to
the set $\wt A$ and starting in the time interval $[U_{k-1}, U_k \land
\sigma'_b]$ is less than $b c_{19} d_k^{1-\beta_4}$. Thus, on $A^+_{k-1}$,
\[
\P\bigl(\bigl\{\wt U_k = U^*_k\bigr\} \cap
\{J_k = m\} \cap\bigl(C^1_k
\bigr)^c \cap C_k \cap F_k \mid
\G_{k-1}\bigr) \leq c_{19} b d_k^{1-\beta_4}.
\]

Now suppose that $\wt U_k = U_{k-1}$.
If we replace $\wt U_k$ with $U_{k-1}$ in the definition of $S^1_k$,
then this random time becomes a stopping time, and we can apply the
strong Markov property at such modified $S^1_k$.
By Lemma 3.2 of \cite{BCJ} and the strong
Markov property applied at the modified $S^1_k$, the probability of $\{
\sup_{s,t\in[S^1_k, U_k]} | Y_{s} - Y_{t}| \geq
d_k^{\beta_4}/4\}$ is bounded by $c_{20} d_k^{1-\beta_4}$. Since
$(C^1_k)^c \cap C_k \cap F_k$ implies $\{| Y_{S^1_k} - Y_{S^+_k}| \geq
d_k^{\beta_4}/4\}$, we obtain on $A^+_{k-1}$,
\[
\P\bigl(\{\wt U_k = U_{k-1}\} \cap\{J_k = m\}
\cap\bigl(C^1_k\bigr)^c \cap C_k
\cap F_k \mid\G_{k-1}\bigr) \leq c_{20}
d_k^{1-\beta_4}.
\]
Combining this with the previous case yields
\[
\P\bigl( \{J_k = m\} \cap\bigl(C^1_k
\bigr)^c \cap C_k \cap F_k \mid
\G_{k-1}\bigr) \leq c_{21} d_k^{1-\beta_4}.
\]
If we apply this estimate with $m$
defined by $2^{-m} < d_k^{\beta_3} \leq2^{-m+1}$, then we obtain, on
$A^+_{k-1}$,
%
%
\begin{equation}
\label{eqn2215} \P\bigl(I_k^c \cap\bigl(C^1_k
\bigr)^c \cap C_k \cap F_k \mid
\G_{k-1}\bigr) \leq c_{21} d_k^{1-\beta_4}.
\end{equation}
There is no $b$ on the right-hand side of the last estimate because
$b$ was fixed, so
it can be absorbed into the constant $c_{21}$. There will be some other places
in the proof where we absorb $b$ into the constant.

\textit{Step} 2.4.
We will show that the process $Y$ is unlikely to hit the boundary close
to the point where the normal vector is parallel to the original vector
from $X$ to $Y$, assuming that $X$ is in $\prt D$ at the initial time.

It is elementary to check that if $\bv,\bw\in
\R^3$ are nonzero vectors,
then the angle $\angle(-\bv,\bv- \bw)$ is greater than the angle
$\angle(\bv, \bw)$.

Suppose that the event $ \{J_k = m\}
\cap C_k\cap C_k^1 \cap F_k$ occurred for some $m$ such that $2^{-m}
\geq d_k^{\beta_3}$. Note that $S_k^1 \leq S_k$ because of
\eqref{eqd22}. Let $\alpha_k$ be the angle between $
X_{S^1_k} - Y_{S^1_k}$ and $ X_{S_k} - Y_{S_k}$.

Suppose that $\alpha_k > 16\cdot2^{-J_k}$. We will show that this
assumption leads to a contradiction. For all
$t\in[S_k^1, S_k]$ such that $X_t \in\prt D$, the angle
between $\n(X_t)$ and $\n(X_{S_k})$ is smaller than $d_k^{\beta_4}$
because $C^1_k$ holds so $S_k \leq U_k \leq S_k^+$, and therefore, the
definition of $S_k^+$ implies that $\sup_{t\in[S_k^1, S_k]} |X_{t} -
X_{S^1_k}|
\leq d_k^{\beta_4}/2$.
It follows that the angle
between $\int_{S_k^1} ^{S_k} \n(X_t) \,dL^X_t$ and $\n(X_{S_k})$
is also smaller than $ d_k^{\beta_4}$. Since $J_k =
m$, the angle between $\n(z_k)$ and $\n(X_{S_k})$ is smaller
than or equal to $2^{-m+2}$. This is equivalent to saying that
the angle between $ Y_{S_k} - X_{S_k}$ and $\n(X_{S_k})$ is
smaller than or equal to $2^{-m+2}$. It follows that the angle
between $\int_{S_k^1} ^{S_k} \n(X_t) \,dL^X_t$ and $Y_{S_k} - X_{S_k}$ is
smaller than $2^{-m+2} + d_k^{\beta_4} \leq
2^{-m+2} + d_k^{\beta_3} \leq2^{-m+2} + 2^{-m} <
2^{-m+3}$. Note that $Y_t \notin
\prt D$ for $t\in[S_k^1, S_k]$. Thus $\int_{S_k^1} ^{S_k}
\n(Y_t) \,dL^Y_t=0$ and therefore,
%
%
\begin{equation}
\label{eqn2216} X_{S_k} - Y_{S_k} = X_{S_k^1} -
Y_{S_k^1} + \int_{S_k^1} ^{S_k} \n
(X_t) \,dL^X_t.
\end{equation}
We will identify some elements of the above formula with vectors $\bv$
and $\bw$ in the opening remark in this step, namely, $\bv= X_{S_k} -
Y_{S_k}$ and $\bw= X_{S_k^1} - Y_{S_k^1}$. Then $ \int_{S_k^1} ^{S_k}
\n(X_t) \,dL^X_t = \bv- \bw$ and
$\alpha_k = \angle(\bv, \bw) = \angle( X_{S^1_k} - Y_{S^1_k},
X_{S_k} -
Y_{S_k}) > 16\cdot2^{-J_k} = 2^{-m+4}$. This and the fact
that $\angle(-\bv,\bv- \bw)= \angle( Y_{S_k} - X_{S_k}, \int_{S_k^1}
^{S_k} \n(X_t) \,dL^X_t) < 2^{-m+3}$ yield a
contradiction. Hence we must have $\alpha_k \leq2^{-J_k+4}$ if $ \{
J_k = m\} \cap
C_k\cap C_k^1 \cap F_k$ holds.

If $C^1_k$ occurred, then $\sup_{S^1_k \leq t \leq U_k} |X_{t} -
X_{S^1_k}| \leq d_k^{\beta_4}/2$. Since $S_k \in[S^1_k, U_k]$, it
follows that $|X_{U_k} - X_{S_k}| \leq d_k^{\beta_4}$.
Recall that $|X_{U_k} - Y_{U_k}| \leq c_5 d_k$ by \eqref{eqd21}.
This
implies that, for small $\eps_1>0$,
%
%
\begin{equation}
\label{j261}\quad  |Y_{U_k} - X_{S_k}| \leq|X_{U_k} -
X_{S_k}| + |X_{U_k} - Y_{U_k}| \leq
d_k^{\beta_4} + c_5 d_k \leq2
d_k^{\beta_4}.
\end{equation}
Let $z^1_k\in\prt D$ be defined by $\bn(z^1_k) = (Y_{S^1_k} -
X_{S^1_k})/ |Y_{S^1_k} - X_{S^1_k}|$. Since
$\alpha_k \leq2^{-J_k+4}$, we have $|z_k - z^1_k|
\leq c_{22} 2^{-J_k} = c_{22} 2^{-m}$. We have assumed that $2^{-m}
\geq d_k^{\beta_3}$, so \eqref{j261} implies that
\begin{eqnarray*}
\bigl|Y_{U_k} - z^1_k\bigr| &\leq& |Y_{U_k} -
X_{S_k}| + |X_{S_k} - z_k| + \bigl|z_k -
z^1_k\bigr|
\\
&\leq&2 d_k^{\beta_4} + 2^{-J_k +1} + c_{22}
2^{-m} = 2 d_k^{\beta_4} + 2^{-m +1} +
c_{22} 2^{-m} \leq c_{23} 2^{-m}.
\end{eqnarray*}
We have shown that the event $ \{J_k = m\} \cap C_k\cap C_k^1 \cap F_k$ implies
%
%
\begin{equation}
\label{j262} \bigl\{\bigl|Y_{U_k} - z^1_k\bigr| \leq
c_{23} 2^{-m}\bigr\}.
\end{equation}

Suppose that $\wt U_k = U^*_k$.
In this case, we will estimate the probability of the event in \eqref
{j262} using excursion theory.
Recall the remarks and conventions from the beginning of step 2.3. Let
$T^1 = \inf \{t\geq0\dvtx |e(t) - e(0)| \geq
c_{16}  \}$, $z^2 \in\prt D$ be the point such that $\bn(z^2) =
(e(T^1) - X_{T^1})/ |e(T^1) - X_{T^1}|$
and
\[
\wh A = \bigl\{e\dvtx T^1 < \zeta,\bigl |e(\zeta-) - z^2\bigr|
\leq c_{23} 2^{-m} \bigr\}.
\]
The number of excursions starting before $\sigma'_b$ and
such that $T^1 < \zeta$ is Poisson with the mean bounded
by $c_{24}b$, by \eqref{exitsyst} and the right-hand side of \eqref
{eqn2212}.
We can assume that $c_{16}>0$ is
arbitrarily small. If $c_{16}$ is sufficiently small, then it is
easy to see that the angle between $e(T^1) - X_{T^1}$ and
$\n(e(0))$ must be bounded below by a strictly positive
constant, and therefore the distance between $z^2$ and
$e(T^1)$ must be bounded below by $c_{25}>0$.
By the strong Markov property applied at
$T^1$, given the values of
$e(T^1)$ and $z^2$ and assuming that $|e(T^1)- z^2|
\geq c_{25}$, the probability that $e(\zeta-) \in\B(z^2,
c_{23}2^{-m})$ is smaller than $c_{26} 2^{-m}$, by \eqref{j263}.
Hence the expected number of excursions in $\wh A$ starting before
$\sigma'_b$ is bounded by $c_{26}b 2^{-m}$.
This implies that the probability that such an excursion will
occur is less than or equal to $c_{26} b 2^{-m}$. We have shown that $
\{J_k = m\} \cap C_k\cap C_k^1 \cap F_k$
implies $ \{|Y_{U_k} - z^1_k| \leq c_{23} 2^{-m}\}$, so if $\{\wt U_k =
U^*_k\} \cap\{J_k = m\} \cap C_k\cap C_k^1 \cap F_k$ occurs, then the
excursion of $Y$ starting at $U_k^*$
belongs to $\wh A$.
We conclude that, on $A^+_{k-1}$,
%
%
\begin{equation}
\label{201210} \P\bigl(\bigl\{\wt U_k = U^*_k\bigr\} \cap
\{J_k =m\} \cap C_k\cap C_k^1
\cap F_k \mid \G_{k-1}\bigr) \leq c_{26} b
2^{-m}.
\end{equation}

Next suppose that $\{\wt U_k = U_{k-1}\}\cap K_{k-1}$ holds. It is easy
to see that if $c_{16} >0$ is sufficiently small, then we can find
$\eps
_1>0$ such that for $\eps< \eps_1$, the angle between $Y_{S^1_k} -
X_{S^1_k}$ and
$\n(X_{S^1_k})$ is bounded below by a strictly positive constant.
Then the distance between $z^1_k$ and
$Y_{S^1_k}$ is bounded below by $c_{26}>0$.
By the strong Markov property applied at
$S^1_k$, given the values of
$Y_{S^1_k}$ and $z^1_k$ and assuming that $|Y_{S^1_k}- z^1_k|
\geq c_{26}$, the probability that $Y_{U_k} \in\B(z^1_k,
c_{23}2^{-m})$ is smaller than $c_{27} 2^{-m}$, by~\eqref{j263}.
We have shown that $\{J_k = m\} \cap C_k\cap C_k^1 \cap F_k$
implies $ \{|Y_{U_k} - z^1_k| \leq c_{23} 2^{-m}\}$ so, on $A^+_{k-1}$,
\[
\P\bigl(\{\wt U_k = U_{k-1}\} \cap\{J_k =m\}
\cap C_k\cap C_k^1 \cap F_k \mid
\G_{k-1}\bigr) \leq c_{27} 2^{-m}.
\]
We combine this with \eqref{201210} to see that
\[
\P\bigl( \{J_k =m\} \cap C_k\cap C_k^1
\cap F_k \mid\G_{k-1}\bigr) \leq c_{28}
2^{-m}.
\]
Summing over $m\geq m'$, we obtain, on $A^+_{k-1}$,
%
%
\begin{equation}
\label{eqn2217} \P\bigl( \bigl\{J_k \geq m'\bigr\} \cap
C_k\cap C_k^1 \cap F_k \mid
\G_{k-1}\bigr) \leq c_{29} 2^{-m'}.
\end{equation}
We combine \eqref{eqn211},
\eqref{eqn2215} and \eqref{eqn2217} to see that
if $2^{-m} \geq d_k^{\beta_3}$, then on $A^+_{k-1}$,
\begin{eqnarray*}
\P\bigl(\{J_k \geq m\} \cap F_k \mid\G_{k-1}
\bigr) &\leq&\P\bigl( C^c_k \cap F_k \mid
\G_{k-1}\bigr) + \P\bigl(I_k^c \cap
\bigl(C^1_k\bigr)^c \cap C_k
\cap F_k \mid\G_{k-1}\bigr)
\\
&&{} + \P\bigl( \{J_k \geq m\} \cap C_k\cap
C_k^1 \cap F_k \mid\G_{k-1}\bigr)
\\
&\leq& c_{13} d_k^{1-\beta_4} + c_{21}
d_k^{1-\beta_4} +c_{29} 2^{-m} \leq
c_{30} 2^{-m};
\end{eqnarray*}
that is, \eqref{eqn188} holds.

The following follows from \eqref{eqn2217}, with $m'$
defined by $2^{-m'-1} \leq d_k^{\beta_3} < 2^{-m'}$. We have on $A^+_{k-1}$,
%
%
\begin{equation}
\label{eqn223} \P\bigl( I_k^c \cap C_k\cap
C_k^1 \cap F_k \mid\G_{k-1}\bigr)
\leq c_{29} d_k^{\beta_3}.
\end{equation}

\textit{Step} 2.5.
We will show that the vector from $X$ to $Y$ is very likely to be
almost parallel to $\prt D$ at the time $U_k$.

Assume that $C_k\cap F_k$ holds. Let
\begin{eqnarray*}
\wh S^j_k &=& \inf\bigl\{t \geq S_k\dvtx
|X_t - Y_t| \leq2^{-j}\bigr\},
\\
\wh U^j_k &=& \inf\bigl\{t\geq\wh S^j_k
\dvtx |X_t - X_{\wh S^j_k}| \geq2^{-j \beta_4}\bigr\},
\\
\wh C^j_k &=& \bigl\{U_k \leq\wh
U^j_k \bigr\}.
\end{eqnarray*}

The following argument is very similar to that in step 2.1.
By the definition of $\wh S^j_k$, for large $j$,
%
%
\begin{equation}
\label{j2710} |X_{\wh S^j_k} - Y_{\wh S^j_k}| = 2^{-j}.
\end{equation}
Suppose that $\{\wh S^j_k \leq U_k\} \cap(\wh C^j_k )^c$ holds. Then
$X_{\wh S^j_k} \in\prt D$ and
$\dist(Y_{\wh S^j_k}, \prt D) \leq2^{-j} $.
By Lemma 3.2 of \cite{BCJ},
%
%
\begin{equation}
\label{j274} \P \Bigl( \sup_{\wh S^j_k\leq t \leq U_k} |Y_{\wh S^j_k} -
Y_{t}| \geq2 ^{-j \beta_4} /3 \Bigr) \leq c_{31}
2^{-j(1-\beta_4)}.
\end{equation}
It follows from \eqref{j272} that
for all $t\in[\wh S^j_k, U_k]$,
%
%
\begin{equation}
\label{j275} |X_{t} - Y_{t}| \leq c_5
|X_{\wh S^j_k} - Y_{\wh S^j_k}|.
\end{equation}
In particular, for large $j$, $|X_{\wh U^j_k} - Y_{\wh U^j_k}| \leq c_5
2^{-j} < 2^{-j \beta_4}/3$.
This, \eqref{j2710} and the definitions of $\wh S^j_k$ and $\wh C^j_k$
imply that, assuming that $\wh C^j_k$ does not hold,
$ |Y_{\wh S^j_k} - Y_{\wh U^j_k }| \geq2^{-j \beta_4}/3$.
This and \eqref{j274} imply that, on $A^+_{k-1}$,
%
%
\begin{equation}
\label{eqd55} \P\bigl( \bigl(\wh C_k^j
\bigr)^c \cap\bigl\{\wh S^j_k \leq
U_k\bigr\} \cap C_k \cap F_k\mid
\G_{k-1}\bigr) \leq c_{32} 2^{-j(1-\beta_4)}.
\end{equation}

Assume that $\wh C_k^j \cap\{\wh S^j_k \leq U_k\}$ holds.
Since
$X_{\wh S^{j}_k} \in\prt D$ and $Y_{U_k} \in\prt D$, there
is $t\in[\wh S^{j}_k, U_k]$ such that $\dist(X_t, \prt D) =
\dist(Y_t, \prt D)$. Let $\wt S^{j}_k$ be the smallest $t\geq
\wh S^{j}_k$ with this property. Let $\wt z\in\prt D$ be the point
closest to $X_{\wt S^{j}_k}$ among all points equidistant from $X_{\wt
S^{j}_k}$ and $Y_{\wt S^{j}_k}$.
By the definition of $\wh C_k^j$, for all
$t\in[\wh S^j_k,U_k]$, we have $ |X_t - X_{\wh S^j_k}|
\leq2^{-j \beta_4}$. By \eqref{j275},
for all
$t\in[\wh S^j_k,U_k]$, we have $ |X_t - Y_{t}|
\leq c_5 2^{-j}$. This implies that, for large $j$, $ |\wt z - X_{\wt
S^j_k}| = |\wt z - Y_{\wt S^j_k}| \leq10\cdot2^{-j \beta_4}$. We
also have
for $t\in[\wt S^j_k,U_k]$,
$ |\wt z - X_{t}| \leq20\cdot2^{-j \beta_4}$
and
$ |\wt z - Y_{t}| \leq20\cdot2^{-j \beta_4}$.
Hence
we can apply Lemma \ref{lemn31} with $c_1/4 =
20\cdot2^{-j \beta_4}$ at the stopping time $\wt S^j_k$ to see that
%
%
\begin{equation}
\label{j2711} \bigl\llvert \< X_{U_k}- Y_{U_k}, \n (\wt z )
\> \bigr\rrvert \leq80\cdot2^{-j \beta_4} \llvert X_{U_k}-
Y_{U_k} \rrvert.
\end{equation}
Since
$ |\wt z - Y_{U_k}| \leq20\cdot2^{-j \beta_4}$, the angle between
$\n
(\wt z)$ and $\n(Y_{U_k})$ is less than $40\cdot2^{-j \beta_4}$ for
large $j$. This and
\eqref{j2711} imply that, for large $j$,
%
%
\begin{equation}
\label{j2712} \bigl\llvert \bigl\< X_{U_k}- Y_{U_k}, \n
(Y_{U_k} ) \bigr\> \bigr\rrvert \leq200\cdot2^{-j \beta_4} \llvert
X_{U_k}- Y_{U_k} \rrvert.
\end{equation}
Let $j_0$ be the largest $j$ such that
$\wh S^j_k \leq U_k$. Then $| X_{U_k}- Y_{U_k}| \geq2^{-j_0-1}$, and
if the event in \eqref{j2712} holds with $j=j_0$, then the following
event holds:
\[
K_k = \biggl\{\frac{ \llvert   \< X_{U_k}- Y_{U_k},
\n (Y_{U_k} ) \> \rrvert  } {
\llvert  X_{U_k}- Y_{U_k} \rrvert } \leq c_2
|X_{U_{k}} - Y_{U_{k}}|^{\beta_4} \biggr\},
\]
with $c_2 = 200\cdot2 ^{\beta_4}$.
It follows from the definitions of $S_k$ and $\wh S^j_k$ that $2^{-j_0}
\leq2c_8 d_k$.
Thus
\[
K_k^c \cap C_k \cap F_k \subset
\bigcup_{j\dvtx 2^{-j} \leq2 c_8 d_k} \bigl(\wh C_k^j
\bigr)^c \cap\bigl\{\wh S^j_k \leq
U_k\bigr\} \cap C_k \cap F_k.
\]
This and
\eqref{eqd55} imply that, on $A^+_{k-1}$,
%
%
\begin{eqnarray}
\label{eqn214} &&\P\bigl( K_k^c \cap C_k \cap
F_k\mid\G_{k-1}\bigr)\nonumber\\
&&\qquad \leq\P \biggl( \bigcup
_{j\dvtx 2^{-j} \leq2 c_8 d_k} \bigl(\wh C_k^j
\bigr)^c \cap\bigl\{\wh S^j_k \leq
U_k\bigr\} \cap C_k \cap F_k \mid
\G_{k-1} \biggr)
\\
&&\qquad\leq \sum_{j\dvtx 2^{-j} \leq2 c_8 d_k} c_{32}
2^{-j(1-\beta_4)} \leq c_{33} d_k^{1-\beta_4}.
\nonumber
\end{eqnarray}

\textit{Step} 2.6.
We will find a lower bound for the distance from $X$ to $Y$ at the time~$U_k$.

Suppose that $I_k \cap C_k \cap F_k$ holds. Recall that
$\beta_4> \beta_3$.
Since $I_k$ holds,
we have $2^{-J_k} \geq d_k^{\beta_3}$. Assume for now that $d_k^{\beta
_3} \leq\eta$, where $\eta>0$ is so small that
$d_k^{\beta_4} < (1/100) \land d_k^{\beta_3}/(4\pi) \leq(1/\pi)
2^{-J_k-1}$.
Since $C_k$ is assumed to hold, we have
$|X_t - X_{S_k}| \leq d_k^{\beta_4}$
for
all $t\in[S_k, U_k]$ such that $X_t \in\prt D$, and therefore, for
such~$t$, the angle
between $\n(X_t)$ and $\n(X_{S_k})$ is smaller than $\pi
d_k^{\beta_4}$. It follows that the angle between $\int_{S_k}
^{U_k} \n(X_t) \,dL^X_t$ and $\n(X_{S_k})$ is also smaller than
$\pi d_k^{\beta_4}$. The angle between $\n(X_{S_k})$ and
$\n(z_k)$ is greater than $2^{-J_k}$. This implies that the
angle between $\int_{S_k} ^{U_k} \n(X_t) \,dL^X_t$ and $Y_{S_k} -
X_{S_k}$, which is the same as
the
angle between $\int_{S_k} ^{U_k} \n(X_t) \,dL^X_t$ and $\n(z_k)$,
is greater than $2^{-J_k} - \pi d_k^{\beta_4} > 2^{-J_k} - 2^{-J_k-1} =
2^{-J_k-1}$. Note that $Y_t \notin\prt D$
for $t\in[S_k, U_k]$ by the definition of $U_k$. Thus
$\int_{S_k} ^{U_k} \n(Y_t) \,dL^Y_t=0$ and, therefore,
%
%
\begin{equation}
\label{eqn215} X_{U_k} - Y_{U_k} = X_{S_k} -
Y_{S_k} + \int_{S_k} ^{U_k}
\n(X_t) \,dL^X_t.
\end{equation}
If $\bv,\bw\in\R^3$ are nonzero vectors
and the angle $\angle(\bv, \bw)$ is greater than $\alpha$ then the
length of $\bv- \bw$ is at least $|\bw| \sin\alpha$. In view of
\eqref
{eqn215}, we can apply this observation to $\bw=
Y_{S_k} - X_{S_k}$ and $\bv= \int_{S_k} ^{U_k} \n(X_t) \,dL^X_t$, and
conclude that
$|X_{U_k}
- Y_{U_k}| \geq c_{34} 2^{-J_k} |X_{S_k} - Y_{S_k}| = c_{34}
2^{-J_k} c_8 d_k$. We now specify the value of the constant in the
definition of $G_k$ to be $c_9 = c_{34}c_8$. With this definition of
$G_k$, we see that we have shown that $G_k$ holds.
Hence, assuming that
$d_k^{\beta_4} < (1/100) \land d_k^{\beta_3}/(4\pi)$, we have on $A^+_{k-1}$,
%
%
\begin{equation}
\label{eqn216} \P\bigl( G_k^c \cap I_k \cap
C_k \cap F_k \mid\G_{k-1}\bigr) =0.
\end{equation}
Since $\inf\{a\geq0\dvtx a^{\beta_4} \geq a^{\beta_3}/(4\pi)\}>0$ and the
probability of any event is bounded by 1,
we have for some constant $c_{35}<\infty$, on the event
$\{d_k^{\beta_4} \geq(1/100) \land d_k^{\beta_3}/(4\pi)\} \cap A^+_{k-1}$,
%
%
\begin{equation}
\label{a11} \P\bigl( G_k^c \cap I_k \cap
C_k \cap F_k \mid\G_{k-1}\bigr) \leq
c_{35} d_k^{\beta_3}.
\end{equation}
In view of \eqref{eqn216}, we see that \eqref{a11} holds on $A^+_{k-1}$.

It follows from \eqref{eqn211},
\eqref{eqn2215}, \eqref{eqn223}, \eqref{eqn214} and
\eqref{a11} that on $A^+_{k-1}$,
\begin{eqnarray*}
\P\bigl(A^c_k \cap F_k \mid
\G_{k-1}\bigr) &=&\P\bigl(\bigl(I^c_k \cup
C^c_k \cup G^c_k \cup
K^c_k\bigr) \cap F_k \mid\G _{k-1}
\bigr)
\\
&\leq& \P\bigl( C_k^c \cap F_k \mid
\G_{k-1}\bigr) + \P\bigl(I_k^c \cap
\bigl(C_k^1\bigr)^c \cap C_k
\cap F_k \mid\G_{k-1}\bigr)
\\
&&{} + \P\bigl( I_k^c \cap C^1_k
\cap C_k \cap F_k \mid\G_{k-1}\bigr) + \P\bigl(
K_k^c \cap C_k \cap F_k \mid
\G_{k-1}\bigr)
\\
&&{} + \P\bigl( G_k^c \cap I_k \cap
C_k \cap F_k \mid\G_{k-1}\bigr)
\\
& \leq& c_{13} d_k^{1-\beta_4} + c_{21}
d_k^{1-\beta_4} +c_{29} d_k^{\beta_3}
+ c_{33} d_k^{1-\beta_4} + c_{35}
d_k^{\beta_3} \\
&\leq& c_{36} d_k^{\beta_3}.
\end{eqnarray*}
This completes the proof of \eqref{eqn1120}.

\textit{Step} 3.
The last step of the proof combines the estimates obtained above.
Although this part of the proof looks complicated, its beginning
consists mostly of elementary combinatorial arguments. The second part
is a more or less straightforward
translation of the earlier estimates into the language of distributions
and stochastic domination.

If $ F_k$ holds, then \eqref{a31} shows that
$\sup_{t\in[S_k, \sigma'_b]} \llvert Y_{t} - X_{t}\rrvert  < d_k/2$.
It follows that if $F_k \cap F_{k+1}$ holds, then
$U_k \in[S_k, \sigma'_b]$ and, therefore,
\[
d_{k+1} = |X_{U_k} - Y_{U_k}| \leq\sup
_{t\in[S_k, \sigma'_b]} \llvert Y_{t} - X_{t}\rrvert <
d_k/2.
\]
Hence if the event $\bigcap_{j\leq k-1} F_j $ occurred, then $d_k \leq
d_0 2^{-k+1} = \eps2^{-k+1}$.
This, the fact that $ A^+_{k-1} \subset\bigcap_{j\leq k-1} F_j $ and
\eqref{eqn1120} imply that
%
%
\begin{equation}
\label{eqn183} \P \bigl( A^c_{k} \cap F_k
\cap A^+_{k-1} \bigr) \leq c_{10} d_k^{\beta_3}
\leq c_{10} \eps^{\beta_3} 2^{-(k-1)\beta_3}.
\end{equation}

Let
\[
F = F_1^c \cup\bigcup_{k=1}^\infty
\bigl(F_k \cap F_{k+1}^c \cap
A^+_{k} \bigr).
\]
If $\bigcap_{k=1}^\infty F_k$ holds, then
%
%
\begin{equation}
\label{a141} \liminf_{k\to\infty}|X_{U_k} -
Y_{U_k}| =\liminf_{k\to\infty} d_{k-1} \leq\lim
_{k\to\infty} \eps2^{-k-2} =0,
\end{equation}
so
$\inf_{0\leq t \leq
\sigma'_b } |X_t - Y_t| \leq\lim\inf_{k\to\infty}|X_{U_k} - Y_{U_k}|
= 0$. The last event has probability
0, according to Lemma \ref{lemo311}, so $\P ( \bigcap_{k=1}^\infty F_k  ) =0$. Since $F_{k+1} \subset F_k$,
there exists at most one $N_1$ such that $F_{N_1}^c \cup
F_{N_1+1}$ fails (in other words, $F_{N_1} \cap
F_{N_1+1}^c$ holds). We will write $\bigcap_{k=1}^\infty F_k = \{N_1
=\infty\}$ so $\P(N_1 = \infty) =0$. There exists at most one $N_2$
such that
$A_j$ holds for all $j<N_2$, and $A_{N_2}$ does not hold. Using these
definitions of $N_1$ and $N_2$, and \eqref{eqn183}, we obtain
\begin{eqnarray*}
\P\bigl(F^c\bigr) &=& \P \Biggl( F_1 \cap\bigcap
_{k=1}^\infty \biggl(F_k^c
\cup F_{k+1} \cup \bigcup_{j\leq k}
A_j^c \biggr) \Biggr)
\\
&\leq&\P \bigl( F_1 \cap\{N_1 =\infty\} \bigr) + \P
\Biggl( \bigcap_{k=1}^\infty
\biggl(F_k^c \cup F_{k+1}\cup \bigcup
_{j\leq k} A_j^c \biggr)\cap\{
N_1<\infty\} \Biggr)
\\
&=& 0 + \P \Biggl( \bigcap_{k=1}^\infty
\biggl(F_k^c \cup F_{k+1}\cup \bigcup
_{j\leq k} A_j^c \biggr)\cap
\{N_1<\infty\} \Biggr)
\\
&\leq& \P \Biggl( \bigcup_{n=1}^\infty
\biggl( \biggl(F_{n}^c \cup F_{n+1}\cup \bigcup
_{j\leq n} A_j^c \biggr) \cap
\{N_1 = n\} \biggr) \Biggr)
\\
&= &\P \Biggl( \bigcup_{n=1}^\infty \biggl(
\biggl( \bigcup_{j\leq n} A_j^c
\biggr) \cap\{N_1 = n\} \biggr) \Biggr)
\\
&=& \P \Biggl( \bigcup_{n=1}^\infty\bigcup
_{m=1}^n \biggl( \biggl( \bigcup
_{j\leq m} A_j^c \biggr) \cap
\{N_1 = n, N_2 =m\} \biggr) \Biggr)
\\
&=& \P \Biggl( \bigcup_{m=1}^\infty \biggl(
\biggl( \bigcup_{j\leq m} A_j^c
\biggr) \cap\{N_1 \geq m, N_2 =m\} \biggr) \Biggr)
\\
&\leq& \P \Biggl( \bigcup_{m=1}^\infty
\bigl( A^+_{m-1} \cap A_m^c \cap
F_m \bigr) \Biggr)
\\
&\leq&\sum_{m=1}^\infty \P \bigl(
A^+_{m-1} \cap A_m^c \cap F_m
\bigr)
\\
&\leq&\sum_{m=1}^\infty c_{10}
\eps^{\beta_3} 2^{-(m-1)\beta_3} \leq c_{37} \eps^{\beta_3}.
\end{eqnarray*}
This proves \eqref{eqn201}.

Since $F_{k+1} \subset F_k$ and $ (F_{k+1}\cap
A^+_{k+1} ) \subset (F_{k}\cap
A^+_{k} )$, we have
%
%
\begin{eqnarray}
\label{eqd121} F &=& F_1^c \cup\bigcup
_{n=1}^\infty \bigl(F_n \cap
F_{n+1}^c \cap A^+_{n} \bigr)
\nonumber
\\
&=&F_1^c \cup\bigcup_{n=1}^{k-2}
\bigl(F_n \cap F_{n+1}^c \cap
A^+_{n} \bigr) \cup \bigl(F_{k-1} \cap F_{k}^c
\cap A^+_{k-1} \bigr)
\nonumber
\\
&&{} \cup \bigcup_{n=k}^\infty
\bigl(F_n \cap F_{n+1}^c \cap
A^+_{n} \bigr)
\\
&&{}\subset F_1^c \cup\bigcup
_{n=1}^{k-2} F_{n+1}^c \cup
\bigl(F_{k-1} \cap F_{k}^c \cap
A^+_{k-1} \bigr) \cup \bigcup_{n=k}^\infty
\bigl(F_n \cap A^+_{n} \bigr)
\nonumber
\\
&&{} \subset F^c_{k-1} \cup\bigl( F_{k-1} \cap
F_{k}^c \cap A^+_{k-1}\bigr) \cup
\bigl(F_k\cap A^+_{k} \bigr).\nonumber
\end{eqnarray}

Let $T_k = U_k \land\sigma'_b$. We make the following three claims:
%
%
\begin{eqnarray}
\label{eqn1810} &&\bigl|\log|X_{T_k}- Y_{T_k}| -
\log|X_{T_{k-1}} - Y_{T_{k-1}}| \bigr|
\nonumber
\\[-8pt]
\\[-8pt]
\nonumber
&&\qquad{} \cases{ %
 =0, & \quad$\mbox{if $F^c_{k-1}$ holds;}$
\vspace*{2pt}\cr
\leq c_{38}:= c_7 \lor\log c_5, &\quad
$\mbox{if $F_{k-1} \cap F_{k}^c \cap
A^+_{k-1}$ holds;}$
\vspace*{2pt}\cr
\leq c_{39} m, & \quad$\mbox{if $\{J_k =m\} \cap F_k
\cap A^+_{k}$ holds.}$ }
\end{eqnarray}
The first claim follows from the definitions of $T_{k-1}$, $S_{k-1}$
and $F_{k-1}$. The second claim follows from the definition of
$S_k$ and \eqref{eqd21} applied with $T = U_{k-1}$. The last claim
follows from the fact that $G_k \subset
A_k$.

If $ A_k$ holds, then $K_k$ holds. Then condition \eqref{eqj201} is
satisfied with $x_0 = X_{U_k}$ and $y_0 = Y_{U_k}$. By the strong
Markov property applied at the stopping time $U_k$, we obtain a formula
analogous to
\eqref{eqn1114} which implies that
\[
\P\bigl(F_{k} \cap A^+_{k} \mid\G_{k-1}\bigr)
\leq \P(F_{k} \mid\G_{k-1}) \leq p_1
\]
on $A^+_{k-1}$. By the repeated
application of the strong Markov property at $U_1, U_2, \ldots,$ we obtain
%
%
\begin{equation}
\label{a121} \P\bigl(F_{k} \cap A^+_{k}\bigr) \leq
p_1^{k}.
\end{equation}
This and the second claim in \eqref{eqn1810} imply that
%
%
\begin{eqnarray}
\label{a122}\qquad  &&\P \bigl( \bigl|\log|X_{T_k}- Y_{T_k}| -
\log|X_{T_{k-1}} - Y_{T_{k-1}}| \bigr| \bone_{F_{k-1} \cap F_{k}^c \cap A^+_{k-1}} >
c_{38} \bigr) =0,
\\
\label{a145}&&\P\bigl( \bigl|\log|X_{T_k}- Y_{T_k}| -\log|X_{T_{k-1}} -
Y_{T_{k-1}}| \bigr| \bone_{F_{k-1} \cap F_{k}^c \cap A^+_{k-1}} \in(0, c_{38}]
\bigr)
\nonumber
\\[-8pt]
\\[-8pt]
\nonumber
&&\qquad\leq
p_1^{k-1},
\\
\label{a146}&&\P \bigl( \bigl|\log|X_{T_k}- Y_{T_k}| -\log|X_{T_{k-1}} -
Y_{T_{k-1}}| \bigr| \bone_{F_{k-1} \cap F_{k}^c \cap A^+_{k-1}} =0 \bigr)
\nonumber
\\[-8pt]
\\[-8pt]
\nonumber
&&\qquad\geq1- p_1^{k-1}.
\end{eqnarray}
Since $D$ is bounded, there exists $m_0 > -\infty$ such that $J_k \geq
m_0$, a.s.
It follows from~\eqref{eqn188} that on $A^+_{k-1}$,
\[
\P \bigl(\{J_k \geq m\} \cap F_k \cap A_k
\mid\G_{k-1} \bigr) \leq c_{40} 2^{-m},
\]
so we obtain for $m\geq m_0$, using \eqref{a121} and the third claim
in \eqref{eqn1810},
%
%
\begin{eqnarray}
\label{a123}\quad \qquad&& \P \bigl(\bigl\llvert \log|X_{T_k}- Y_{T_k}| -
\log|X_{T_{k-1}} - Y_{T_{k-1}}| \bigr\rrvert \bone_{\{J_k =m\} \cap F_k\cap A^+_{k}} >
c_{39} m \bigr) =0,
\\
\label{a147}&&\P\bigl (\bigl\llvert \log|X_{T_k}- Y_{T_k}| -\log|X_{T_{k-1}}
- Y_{T_{k-1}}| \bigr\rrvert \bone_{\{J_k =m\} \cap F_k\cap A^+_{k}} \in(0, c_{39} m] \bigr)
\nonumber
\\[-8pt]
\\[-8pt]
\nonumber
&&\qquad
\leq p_1^{k-1}c_{40} 2^{-m},
\\
\label{a148}&&\P \bigl(\bigl\llvert \log|X_{T_k}- Y_{T_k}| -
\log|X_{T_{k-1}} - Y_{T_{k-1}}| \bigr\rrvert \bone_{\{J_k =m\} \cap F_k\cap A^+_{k}} =0
\bigr)
\nonumber
\\[-8pt]
\\[-8pt]
\nonumber
&&\qquad\geq 1 - p_1^{k-1}c_{40} 2^{-m}.
\end{eqnarray}

Recall from the paragraph following \eqref{a141} that only a finite
number of events $F_k$, $k\geq1$, hold, a.s. Hence, for some random
$k_0<\infty$ and all $k\geq k_0$, we have $U_k = \sigma'_b$. It follows
that $T_n = T_{k_0}= \sigma'_b$ for all $n\geq k_0$, and therefore,
\[
|V_1 - V_0| = \Biggl\llvert \sum
_{k=1}^\infty \log|X_{T_k}-
Y_{T_k}| -\log|X_{T_{k-1}} - Y_{T_{k-1}}| \Biggr\rrvert.
\]
This, \eqref{eqd121} and the first claim in \eqref{eqn1810} imply that
%
%
\begin{eqnarray}
\label{a149} |V_1 - V_0| \bone_F & =&
\Biggl\llvert \sum_{k=1}^\infty
\log|X_{T_k}- Y_{T_k}| -\log|X_{T_{k-1}} -
Y_{T_{k-1}}| \Biggr\rrvert \bone_F
\nonumber\\
& \leq& \sum_{k=1}^\infty \bigl\llvert
\log|X_{T_k}- Y_{T_k}| -\log|X_{T_{k-1}} -
Y_{T_{k-1}}|\bigr \rrvert \bone_{F^c_{k-1}}
\nonumber
\\
&&{} + \sum_{k=1}^\infty \bigl\llvert
\log|X_{T_k}- Y_{T_k}| -\log|X_{T_{k-1}} -
Y_{T_{k-1}}|\bigr \rrvert \bone_{F_{k-1} \cap F_{k}^c \cap A^+_{k-1}}
\nonumber
\\[-8pt]
\\[-8pt]
\nonumber
&&{} + \sum_{k=1}^\infty \bigl\llvert
\log|X_{T_k}- Y_{T_k}| -\log|X_{T_{k-1}} -
Y_{T_{k-1}}| \bigr\rrvert \bone_{F_k\cap A^+_{k}}
\\
& =& \sum_{k=1}^\infty\bigl \llvert
\log|X_{T_k}- Y_{T_k}| -\log|X_{T_{k-1}} -
Y_{T_{k-1}}| \bigr\rrvert \bone_{F_{k-1} \cap F_{k}^c \cap A^+_{k-1}}
\nonumber
\\
&&{} + \sum_{k=1}^\infty\sum
_{m\geq m_0} \bigl\llvert \log|X_{T_k}- Y_{T_k}| -
\log|X_{T_{k-1}} - Y_{T_{k-1}}| \bigr\rrvert \bone_{ \{J_k =m\} \cap F_k\cap A^+_{k}}.
\nonumber
\end{eqnarray}

Let $k_0$ be such that $p_1^{k-1} + p_1^{k-1} \sum_{m\geq m_0}
c_{40} 2^{-m} \leq1$ for $k\geq k_0$, and let $m_1$ be such
that $\sum_{m\geq m_1} c_{40} 2^{-m} \leq1$. Let $q'\geq0$ be
such that $q' + \sum_{ m\geq m_1} c_{40} 2^{-m} =1$, and let
$q_k\geq0$ be such that $q_k +p_1^{k-1} + p_1^{k-1}
\sum_{m\geq m_0} c_{40} 2^{-m} =1$, for $k\geq k_0$. Let $Z_k$,
$k\geq1$, be independent random variables with the following
distributions; for $1\leq k \leq k_0 -1$,
\[
Z_k =\cases{ c_{38} +
c_{39} m_1, & \quad$\mbox{with probability $q'
$;}$
\vspace*{2pt}\cr
c_{39} m, &\quad $\mbox{with probability $ c_{40}
2^{-m}$ for $m\geq m_1$,}$}
\]
and for $k\geq k_0$,
\[
Z_k =\cases{ %
0, & \quad$\mbox{with
probability $q_k$;}$
\vspace*{2pt}\cr
c_{38}, & \quad$\mbox{with probability $p_1^{k-1}
$;}$
\vspace*{2pt}\cr
c_{39} m, & \quad$\mbox{with probability $c_{40}
p_1^{k-1} 2^{-m}$ for $m\geq m_0$.}$}
\]

By \eqref{a122}--\eqref{a146}, \eqref{a123}--\eqref{a148} and
\eqref
{a149}, the
random variable
\[
|V_1 - V_0| \bone_F = \Biggl\llvert \sum
_{k=1}^\infty \log|X_{T_k}-
Y_{T_k}| -\log|X_{T_{k-1}} - Y_{T_{k-1}}| \Biggr\rrvert
\bone_{F}
\]
is stochastically dominated by $Z_*:=\sum_{k\geq1} Z_k$. We have
\begin{eqnarray*}
\E Z_* &= &\sum_{ 1 \leq k \leq k_0-1} \biggl( (c_{38} +
c_{39} m_1)q' + \sum
_{m\geq m_1} c_{40} 2^{-m}c_{39} m
\biggr)
\\
&&{} +\sum_{k\geq k_0} \biggl( q_k \cdot0
+p_1^{k-1} c_{38} + p_1^{k-1}
\sum_{m\geq m_0} c_{39} 2^{-m}c_{39}
m \biggr)< \infty.
\end{eqnarray*}
If we take $G(a)$ to be the cumulative distribution function of $Z_*$,
then the last estimate shows that \eqref{eqn202} is satisfied.
\end{pf}

The next result is an elementary lemma involving distributions and expectations.
Recall the notation from \eqref{eqstdnot}.

%
\begin{lemma}\label{lemn113}
For any $c_0>0$, $\beta_1 \in(0,1/2)$ there exist $\beta_2,c_1, c_2,
b,\eps_1>0$ such that if $\eps\leq\eps_1$, $x_0 \in\prt D$,
$y_0\in\ol D$, $|x_0 - y_0| = \eps$, $X_0 = x_0$, $Y_0=y_0$
and
%
%
\begin{equation}
\label{eqn114} \frac{ \llvert   \< y_0 - x_0,
\n (x_0 ) \> \rrvert  } {
\llvert  y_0-x_0 \rrvert } \leq c_0 \eps^{\beta_1},
\end{equation}
then there exists an event $F$ such that
%
%
\begin{eqnarray}
\label{eqn115} \P^{x_0,y_0}\bigl(F^c\bigr) &\leq&
c_1 \eps^{\beta_2},
\\
\E^{x_0,y_0} \bigl[( V_1- V_0)
\bone_{F} \bigr]& \geq &c_2.\label{eqn116}
\end{eqnarray}
\end{lemma}

\begin{pf}
It suffices to prove the lemma for $c_0=1$, by the same argument as the
one at the beginning of the proof of Lemma \ref{lemo303}.

First we prove a general claim.
Suppose that a cumulative distribution
function $G\dvtx \R\to[0,1]$ satisfies $\int_{-\infty}^\infty
|a| \,dG(a) < \infty$. Then for every $c_3>0$ there exists
$p_1>0$ such that if $W$ is a random variable which satisfies $\P
(|W|\leq a) \leq
G(a)$ for $a\in\R$ and $\P(W \leq c_3) \leq p_1$, then
$\E W \geq c_3/2$. To see this, let $a_1>- \infty$ be such that
$\int_{(-\infty,a_1]} |a| \,dG(a) < c_3/8$. We choose $p_1>0$ so small
that $|a_1| p_1 < c_3/8$ and $c_3 (1-p_1) > 3c_3/4$. Then
%
%
\begin{eqnarray}
\label{a151} \E W &\geq&\int_{(-\infty,a_1]} a \,dG(a) -
|a_1| p_1 + c_3 (1- p_1)
\nonumber
\\[-8pt]
\\[-8pt]
\nonumber
&\geq&-
c_3/8 - c_3/8 + 3 c_3/4 = c_3 /2.
\end{eqnarray}
We will apply this observation to $W=(V_1 -
V_0)\bone_F$. By Lemma \ref{lemn81}, there exists an event $F$ such
that $\P(F^c) \leq\eps^{\beta_2}$ and $\P(|V_1-V_0|\bone_F \leq a)
\leq G(a)$ for $a\in\R$ for some $G$ with
$\int_{-\infty}^\infty|a| \,dG(a) < \infty$.
We can choose small $\eps_1, c_3>0$ and apply Lemma \ref{lemo303}
to obtain
\[
\P\bigl((V_1 - V_0)\bone_F \leq
c_3\bigr) \leq\P(V_1 - V_0 \leq
c_3) + \P\bigl(F^c\bigr) \leq p_1/2 +
\eps^{\beta_2} \leq p_1.
\]
We now apply \eqref{a151} to $W=(V_1 -
V_0)\bone_F$ to see that $\E[ (V_1 - V_0)\bone_F]
\geq c_3/2$. We take $c_2 = c_3/2 $ to finish the proof of the lemma.
\end{pf}

\begin{pf*}{Proof of Theorem \ref{eqj135}}
\textit{Step} 1.
In this step, we will define, using induction, a pair of stochastic
processes similar to $X$ and $Y$ on a sequence of random intervals. At
the end of each interval, we check whether the processes have a typical
(and desirable) behavior. If so, we let them continue according to the
original stochastic differential equations. Otherwise, we insert a jump
which brings the processes to a convenient position. We will later
argue that the probability of inserting even a single jump is very
small. We note that this part of the proof could have been presented in
a different way. Instead of inserting jumps, we could have killed the
processes at the time when we insert the first jump. This would have
made the first step of the argument more natural, but it would make the
remaining part of the proof more awkward to present.

Recall that $\sigma'_b = \sigma^X_b \land\sigma^Y_b$ and
\[
\sigma'_{(k+1)b} = \inf \bigl\{t\geq\sigma'_{kb}
\dvtx \bigl( L^{X}_t - L^{X}_{\sigma'_{kb}}
\bigr) \land \bigl( L^{Y}_t - L^{Y}_{\sigma'_{kb}}
\bigr) \geq b \bigr\}
\]
for $k\geq1$. Fix $c_0,\eps_1,b,\beta_1>0$ and $p<1$
such that Lemmas \ref{lemo302}, \ref{lemo303} and \ref{lemn113}
hold with this choice of parameters. Below, the constant $c_0$ will be
denoted~$c_2$.

We will define processes $X^*_t$ and $Y^*_t$ for $t\geq0$ in
an inductive way. Let $X^*_t = X_t$ and $Y^*_t = Y_t$ for $
t\in[0,\sigma'_{b})$.
By Lemma \ref{lemo311}, $Y_{\sigma'_{b}} \ne
X_{\sigma'_{b}}$, $\P^{x,y}$-a.s., for any $x,y\in\ol D$ such that
$x\ne y$. Fix an arbitrary $p_1>0$ and choose $c_1>0$ such that
%
%
\begin{equation}
\label{eqo315} \P^{x,y}\bigl(|Y_{\sigma'_{b}} - X_{\sigma'_{b}}| \leq
c_1\bigr) < p_1.
\end{equation}
Let $F_1 = \{|Y_{\sigma'_{b}} - X_{\sigma'_{b}}| \geq c_1\}$. Recall
from Section~\ref{secdiffrbm} that $\pi_x$ denotes the projection on
the plane tangent to $\prt D$ at $x\in\prt D$.
Suppose that $\sigma'_b = \sigma_b^X$, recall $c_2=c_0$ defined above
and let
\[
A_1 = \biggl\{ \frac{ \llvert   \< Y_{\sigma'_{b}} - X_{\sigma'_{b}},
\n (X_{\sigma'_{b}} ) \> \rrvert  } {
\llvert  Y_{\sigma'_{b}} - X_{\sigma'_{b}} \rrvert } \leq c_2 |
Y_{0} - X_{0}|^{\beta_1} \biggr\},
\]
%
%
%
\begin{equation}
\label{eqo306}\qquad \cases{ Y^*_{\sigma'_{b}} = Y_{\sigma'_{b}}, \qquad\mbox{if
$A_1 \cap F_1$ holds,} \vspace*{2pt}
\cr
\displaystyle Y^*_{\sigma'_{b}} = X_{\sigma'_{b}} + \pi_{X_{\sigma'_{b}}}(Y_{\sigma'_{b}-}-X_{\sigma'_{b}})
\frac{\llvert X_{\sigma'_{b}}-Y_{\sigma'_{b}}\rrvert
\lor c_1} {
\llvert
\pi_{X_{\sigma'_{b}}}(Y_{\sigma'_{b}}-X_{\sigma'_{b}})\rrvert }, \vspace*{2pt}\cr
\hspace*{71pt}\mbox{otherwise}.}
\end{equation}
Let $\{Y^*_t, t\in[\sigma'_{b}, \sigma'_{2b})\}$ be the
solution to \eqref{eqj132} with the initial condition given
by~\eqref{eqo306} and driven by Brownian motion $\{B_t, t\in
[\sigma'_{b}, \sigma'_{2b})\}$. Let $X^*_t = X_t$ for $ t\in
[\sigma'_{b}, \sigma'_{2b})$.
Note that, no matter which part of the definition \eqref{eqo306} is
applied, we
have $|Y^*_{\sigma'_{b}} - X^*_{\sigma'_{b}}| \geq
|Y_{\sigma'_{b}} - X_{\sigma'_{b}}|$ and
%
%
\begin{equation}
\label{eqo307} \frac{ \llvert   \< Y^*_{\sigma'_{b}} - X^*_{\sigma'_{b}},
\n (X^*_{\sigma'_{b}} ) \> \rrvert  } {
\llvert  Y^*_{\sigma'_{b}} - X^*_{\sigma'_{b}} \rrvert } \leq c_2 | Y_{0} -
X_{0}|^{\beta_1}.
\end{equation}
We have
%
%
\begin{equation}
\label{eqo312} \E^{x,y}\log\bigl\llvert Y^*_{\sigma'_{b}} -
X^*_{\sigma'_{b}}\bigr\rrvert \geq\log c_1 > -\infty.
\end{equation}
If $\sigma'_b = \sigma_b^Y$, then we exchange the roles of $X$
and $Y$ in the above definitions.

The following formulas are a part of the inductive definition, to be
continued below. Let
\begin{eqnarray*}
\sigma^{*}_{0} &=&0,
\\
\sigma^{*}_{kb} &=& \inf \bigl\{t\geq\sigma^{*}_{(k-1)b}
\dvtx \bigl( L^{X^*}_t - L^{X^*}_{\sigma^{*}_{(k-1)b}}
\bigr) \land \bigl( L^{Y^*}_t - L^{Y^*}_{\sigma^{*}_{(k-1)b}}
\bigr) \geq b \bigr\},\qquad k\geq1,
\\
R^*_t &=& \bigl|X^*_t - Y^*_t\bigr|,\qquad M^*_t
= \log R^*_t,\qquad t\geq0,
\\
V^*_k &=&M^*_{\sigma^{*}_{kb}},\qquad k=0,1,\ldots
\end{eqnarray*}
In view of \eqref{eqo307}, we can apply Lemma \ref{lemn113}
to the process $\{(X^*_t,Y^*_t), t\in[\sigma^{*}_{b},
\sigma^{*}_{2b})\}$ to conclude that there exist $c_3 >0$ and an event
$F_{2} \in\sigma(B_t, t\in[\sigma^{*}_{b},
\infty))$ such that, on the event $\{
R^*_{\sigma^{*}_{b}} \leq\eps_1\} $,
\begin{eqnarray*}
\P \bigl(F_{2}^c \mid X^*_{\sigma^{*}_{b}},Y^*_{\sigma
^{*}_{b}}
\bigr) &\leq&\bigl(R^*_{\sigma^{*}_{b}}\bigr)^{\beta_2},
\\
\E \bigl[ \bigl(V^*_2 - V^*_1 \bigr)
\bone_{F_2} \mid X^*_{\sigma^{*}_{b}},Y^*_{\sigma^{*}_{b}} \bigr]
&\geq&
c_3.
\end{eqnarray*}

We proceed with the inductive definition. Suppose that $F_k$,
$X^*_t$ and $Y^*_t$ are already defined for some $k\geq2$ and
$t\in[0, \sigma^{*}_{kb})$. Suppose that $\sigma^{*}_{kb} =
\inf \{t\geq\sigma^{*}_{(k-1)b}\dvtx L^{X^*}_t -
L^{X^*}_{\sigma^{*}_{(k-1)b}} \geq b \}$, and let
\[
A_k = \biggl\{ \frac{ \llvert   \< Y^*_{\sigma^{*}_{kb}-} - X^*_{\sigma^{*}_{kb}-},
\n (X^*_{\sigma^{*}_{kb}-} ) \> \rrvert  } {
\llvert  Y^*_{\sigma^{*}_{kb}-} - X^*_{\sigma^{*}_{kb}-} \rrvert } \leq c_2 \llvert
Y_{\sigma^{*}_{(k-1)b}} - X_{\sigma
^{*}_{(k-1)b}}\rrvert ^{\beta_1} \biggr\},
\]
%
%
%
\begin{eqnarray}
\label{eqo3016}\qquad \cases{ Y^*_{\sigma^{*}_{kb}} = Y^*_{\sigma^{*}_{kb}-}, &\quad$\mbox{on
$A_k \cap F_k$,}$ \vspace*{2pt}
\cr
Y^*_{\sigma^{*}_{kb}} =
X^*_{\sigma^{*}_{kb}-} 
+ \pi_{X^*_{\sigma^{*}_{kb}-}}\bigl(Y^*_{\sigma^{*}_{kb}-}-X^*_{\sigma
^{*}_{kb}-}
\bigr) &\vspace*{2pt}\cr
\displaystyle\qquad\hspace*{8pt}{}\times\frac{\llvert X^*_{\sigma^{*}_{kb}-}-Y^*_{\sigma^{*}_{kb}-}\rrvert
\lor\llvert X^*_{\sigma^{*}_{(k-1)b}}-Y^*_{\sigma^{*}_{(k-1)b}}\rrvert } {
\llvert
\pi_{X^*_{\sigma^{*}_{kb}-}}(Y^*_{\sigma^{*}_{kb}-}-X^*_{\sigma
^{*}_{kb}-})\rrvert }, &\quad$\mbox{otherwise}.$}
\end{eqnarray}
Let $\{(X^*_t,Y^*_t), t\in[\sigma^{*}_{kb},
\sigma^{*}_{(k+1)b})\}$ be the solution to
\eqref{eqj131}--\eqref{eqj132} with the initial conditions
given by $X^*_{\sigma^{*}_{kb}} = X^*_{\sigma^{*}_{kb}-}$ and
\eqref{eqo3016}, and driven by Brownian motion $\{B_t, t\in
[\sigma^{*}_{kb}, \sigma^{*}_{(k+1)b})\}$.
No matter which part of the definition \eqref{eqo3016} is applied,
we have
%
%
\begin{equation}
\label{a161} \bigl|Y^*_{\sigma^{*}_{kb}} - X^*_{\sigma^{*}_{kb}}\bigr| \geq
\bigl|Y^*_{\sigma^{*}_{kb}-} - X^*_{\sigma^{*}_{kb}-}\bigr|
\end{equation}
and
%
%
\begin{equation}
\label{eqo3017} \frac{ \llvert   \< Y^*_{\sigma^{*}_{kb}} - X^*_{\sigma^{*}_{kb}},
\n (X^*_{\sigma^{*}_{kb}} ) \> \rrvert  } {
\llvert  Y^*_{\sigma^{*}_{kb}} - X^*_{\sigma^{*}_{kb}} \rrvert } \leq c_2 \bigl\llvert
Y_{\sigma^{*}_{(k-1)b}} - X^*_{\sigma
^{*}_{(k-1)b}}\bigr\rrvert ^{\beta_1}.
\end{equation}
If $\sigma^{*}_{kb} = \inf \{t\geq\sigma^{*}_{(k-1)b}\dvtx L^{Y^*}_t - L^{Y^*}_{\sigma^{*}_{(k-1)b}} \geq b \}$, then
we exchange the roles of $X$ and $Y$ in the above definitions.

In view of \eqref{eqo3017}, we can apply Lemma
\ref{lemn113} to the process $\{(X^*_t,Y^*_t), t\in
[\sigma^{*}_{kb}, \sigma^{*}_{(k+1)b})\}$ to conclude that
there exists an event $F_{k+1} \in\sigma(B_t, t\in
[\sigma^{*}_{kb}, \infty))$ such that, on
the event $\{ R^*_{\sigma^{*}_{kb}} \leq\eps_1\} $,
%
%
\begin{eqnarray}
\label{eqo317} \P \bigl(F_{k+1}^c \mid
X^*_{\sigma^{*}_{kb}},Y^*_{\sigma
^{*}_{kb}} \bigr)& \leq&\bigl(R^*_{\sigma^{*}_{kb}}
\bigr)^{\beta_2},
\nonumber
\\[-8pt]
\\[-8pt]
\nonumber
\E \bigl[ \bigl(V^*_{k+1} - V^*_k \bigr)
\bone_{F_{k+1}} \mid X^*_{\sigma^{*}_{kb}},Y^*_{\sigma^{*}_{kb}} \bigr]&
\geq&
c_3.
\end{eqnarray}

\textit{Step} 2.
We will show that the probability of the undesirable events $F_k^c$ and
$A_k^c$ is very small.

Definition \eqref{eqo3016} implies that on $F_{k+1}^c$, we
have $V^*_{k+1} \geq V^*_k$. This and the strong Markov property imply
that on the event $\{
R^*_{\sigma^{*}_{kb}} \leq\eps_1\} $,
%
%
\begin{eqnarray}
\label{eqo313} &&\E \bigl[ V^*_{k+1} - V^*_k \mid\sigma\bigl(
\bigl(X^*_{t},Y^*_t\bigr), t \leq\sigma^{*}_{kb}
\bigr) \bigr]\nonumber\\
&&\qquad = \E \bigl[ V^*_{k+1} - V^*_k \mid
X^*_{\sigma^{*}_{kb}},Y^*_{\sigma^{*}_{kb}} \bigr]
\nonumber\\
&&\qquad=
\E \bigl[ \bigl(V^*_{k+1} - V^*_k \bigr)
\bone_{F_{k+1}} \mid X^*_{\sigma^{*}_{kb}},Y^*_{\sigma^{*}_{kb}} \bigr]
\\
&&\qquad\quad{}+ \E
\bigl[ \bigl(V^*_{k+1} - V^*_k \bigr) \bone_{F_{k+1}^c}
\mid X^*_{\sigma^{*}_{kb}},Y^*_{\sigma^{*}_{kb}} \bigr]
\nonumber\\
&&\qquad\geq c_3+ 0=c_3>0.
\nonumber
\end{eqnarray}

Let $K_1 = \inf\{k\geq1\dvtx \sup_{t\in[\sigma^{*}_{kb},
\sigma^{*}_{(k+1)b}]} R^*_t \geq\eps_1\}$ and $\wt V_k =
V^*_{k\land K_1}$.
It follows from the definition of $V^*_k$'s that all these
random
variables are bounded above by a finite constant because $D$ has a
finite diameter. The estimate \eqref{eqo312}
implies that $\E V^*_1 > -\infty$. It follows from this and
\eqref{eqo313} that
$\E|\wt V_k| < \infty$ for all $k$
and $\{\wt V_k, k\geq1\}$ is a
submartingale. Thus, $\wt V_k$ cannot converge to $-\infty$
with positive probability.

For any fixed $j$, we will estimate the number of $k$ such that
$\wt V_k \in[j, j+1]$.

Let $c_4 = \sup_{x,y \in\ol D} \log|x-y|$ and note that $c_4
< \infty$. We will argue that for any $c_5 \in(-\infty, c_4)$, one can
choose $\eps_1>0$ so small that if $|x-y|\leq\eps_1$, then
$\sup_k \wt V_k \leq c_5$, $\P^{x,y}$-a.s. Let $S = \inf\{t\geq0\dvtx R^*_t
\geq\eps_1\}$ and note that $S \in[\sigma^{*}_{K_1 b}, \sigma
^{*}_{(K_1+1) b}]$. By~\eqref{j271} and the remark following it, for
some $c_6 < \infty$,
\begin{eqnarray*}
\sup_{t\in[0,\sigma^{*}_{(K_1+1) b}]} R^*_t &\leq&\eps_1 \exp
\bigl(c_6 \bigl(L^{X^*}_{\sigma^{*}_{(K_1+1) b}}
-L^{X^*}_S + L^{Y^*}_{\sigma^{*}_{(K_1+1) b}} -
L^{Y^*}_S\bigr)\bigr) \\
&\leq&\eps_1 \exp
\bigl(c_6 (b + b)\bigr).
\end{eqnarray*}
It follows that, for small $\eps_1$, a.s.,
%
%
\begin{equation}
\label{eqd92} \sup_k \wt V_{k} \leq\log
\eps_1 + 2c_6 b \leq c_5.
\end{equation}
Consider any $c_5 \in(-\infty, c_4)$, assume that $\log\eps_1 +
2c_6 b
\leq c_5$
and fix an integer $j\leq c_5$. Let $U_1 = 0$ and
\begin{eqnarray*}
\wh U_{k} &=& \inf\bigl\{n \geq U_k\dvtx \wt
V_n \notin[j-1, j+2]\bigr\},\qquad k\geq1,
\\
U_{k} &=& \inf\bigl\{n \geq\wh U_k\dvtx \wt
V_n \in[j, j+1]\bigr\},\qquad k \geq2,
\\
K_2^j &=& \sup\{k\dvtx U_k < \infty\},
\end{eqnarray*}
with the convention that $\inf\varnothing=\infty$. The random
variable $K^j_2$ is bounded above by the sum of the number of
upcrossings of the interval $[j-1, j]$ and the number of
downcrossings of the interval $[j+1, j+2]$ by the process $\wt
V_k$. By the upcrossing inequality, in
view of \eqref{eqd92},
%
%
\begin{equation}
\label{eqo314} \qquad\E K^j_2 \leq \E\bigl(\wt
V_\infty- (j -1)\bigr)^+ + \E\bigl(\wt V_\infty- (j +1)\bigr)^+
+1 \leq2(c_5 - j +2).
\end{equation}

Suppose that $\wt V_{U_k} \in[j, j+1]$ for some $k$.
Let $k_0$ be the smallest integer greater than $3/c_7$, where
$c_7$ has the same value as $c_1$ in Lemma
\ref{lemo303}. Let $p_2$ have the same value as $p$
in Lemma \ref{lemo303}.
We will apply Lemma \ref{lemo303} to estimate
$\wt V_{n+1} - \wt V_n$; this can be done because of \eqref{a161} and
\eqref{eqo3017}.
By Lemma \ref{lemo303} and the strong
Markov property applied at the stopping times $\sigma^{*}_{nb}$,
$n=U_k, U_k+1, \ldots,$ we see that for $c_7,p_2>0$ as chosen above and
$p_3:=
p_2^{k_0+1}$,
\[
\P \bigl( \wt V_{n+1} - \wt V_n \geq c_7, n
= U_k, U_{k}+1,\ldots, U_{k}+
k_0 \mid X^*_{\sigma^{*}_{U_k b}}, Y^*_{\sigma^{*}_{U_k b}} \bigr) \geq
p_2^{k_0+1} = p_3.
\]
If the event in the last formula occurs, then the process $\wt
V$ will leave the interval $[j-1, j+2]$ in at most $k_0+1$
steps, so $\wh U _k - U_k \leq k_0+1$ in this case. If
the\vadjust{\goodbreak}
process $\{\wt V_m, m\geq k\}$ does not leave $[j-1, j+2]$ in $k_0+1$ steps,
then we apply the same argument again, this time using stopping
times $U_{k}+k_0+1, \ldots, U_{k}+ 2k_0+1$. By induction,
the probability that the
process $\{\wt V_m, m\geq k\}$ does not leave $[j-1, j+2]$ in
$r(k_0+1)$ steps is at most $(1-p_3)^r$. It follows that
$(\wh
U _k - U_k )/( k_0+1)$ is majorized by a geometric random
variable with mean $1/p_3$ and, therefore, $\E [\wh U _k - U_k
\mid
X^*_{\sigma^{*}_{U_k b}}, Y^*_{\sigma^{*}_{U_k b}}  ]\leq
(k_0+1)/p_3$. Let $K^j_3$ be the number of $k$ such that $\wt
V_k \in[j,j+1]$. We combine the last estimate with
\eqref{eqo314} to see that
%
%
\begin{equation}
\label{eqd93} \E K^j_3 \leq2(c_5 - j +2)
(k_0+1)/p_3.
\end{equation}
This, \eqref{eqo315}, \eqref{eqo317} and \eqref{eqd92} yield
\begin{eqnarray*}
\P \biggl( \bigcup_{k\geq1} F^c_k
\biggr) &\leq&\E \biggl[ \sum_{k\geq1}
\bone_{F^c_k} \biggr] = \E\bone_{F^c_1}+ \sum
_{k\geq2} \E\bone_{F^c_k}
\\
&\leq& p_1 + \sum_{j \leq c_5} \E \biggl[ \sum
_{k\dvtx \wt V_{k-1} \in[j,j+1]} \E\bigl(\bone_{F^c_k} \mid\wt
V_{k-1} \in [j,j+1]\bigr) \biggr]
\\
&\leq& p_1 + \sum_{j \leq c_5} \E \biggl[ \sum
_{k\dvtx \wt V_{k-1} \in[j,j+1]} e^{(j+1) \beta_2} \biggr]
\\
&\leq &p_1 + \sum_{j \leq c_5}
e^{(j+1) \beta_2} 2(c_5 - j +2) (k_0+1)/p_3.
\end{eqnarray*}
By \eqref{eqd93} and Lemma \ref{lemo302}, for some
$\beta_3>0$,
\begin{eqnarray*}
\P \biggl( \bigcup_{k\geq1} A^c_k
\biggr) &\leq&\E \biggl[ \sum_{k\geq1}
\bone_{A^c_k} \biggr] = \sum_{k\geq1} \E
\bone_{A^c_k}
\\
&=& \sum_{j \leq c_5} \E \biggl[ \sum
_{k\dvtx \wt V_{k-1} \in[j,j+1]} \E\bigl(\bone_{A^c_k} \mid\wt V_{k-1}
\in [j,j+1]\bigr) \biggr]
\\
&\leq&\sum_{j \leq c_5} \E \biggl[ \sum
_{\wt V_{k-1} \in[j,j+1]} e^{(j+1) \beta_3} \biggr]
\\
&\leq&\sum_{j \leq c_5} e^{(j+1) \beta_3}
2(c_5 - j +2) (k_0+1)/p_3.
\end{eqnarray*}
We combine the last two estimates to obtain
%
%
\begin{eqnarray}
\label{eqd94}&& \P \biggl( \bigcup_{k\geq1}
A^c_k \cup F^c_k \biggr)
\nonumber
\\[-8pt]
\\[-8pt]
\nonumber
&&\qquad\leq
p_1 + \sum_{j \leq c_5} \bigl(e^{(j+1) \beta_2}
+ e^{(j+1) \beta_3}\bigr) 2(c_5 - j +2) (k_0+1)/p_3.
\end{eqnarray}
Consider an arbitrarily small $p_4>0$. The probability $p_1$ in
\eqref{eqo315} may be chosen to be smaller than $p_4/2$. We make the
sum in \eqref{eqd94} smaller than $p_4/2$
by taking $c_5>-\infty$ sufficiently small.
Then, assuming that $\log\eps_1 + 2c_6 b \leq c_5$,
%
%
\begin{equation}
\label{eqd91} \P \biggl( \bigcup_{k\geq1}
A^c_k \cup F^c_k \biggr) \leq
p_4.
\end{equation}

\textit{Step} 3.
This step contains soft arguments translating estimates that show that
the distance between $X$ and $Y$ has a tendency to grow into a
statement about the almost sure behavior of the distance process.

Recall that $R_t = |X_t - Y_t|$ and let
$T^R_a = \inf\{t\geq0\dvtx R_t = a\}$.
Recall that $\wt V_k$ does not converge to $-\infty$ at a
finite or infinite time, a.s. If all events $A_k \cap F_k$,
$k\geq1$, hold, then $X^*_t = X_t$ and $Y^*_t = Y_t$ for all
$t\geq0$. This and \eqref{eqd91} imply that for any $p_4>0$,
there exists $\eps_1>0$ such that for any $x,y \in\ol D$, $x
\ne y$, we have $\P^{x,y}(T^R_{\eps_1} < T^R_{0})\geq
1-p_4$.

The process $R_t$ is continuous for all $t\geq0$, a.s.
because the processes $X_t$ and $Y_t$ are continuous.

Suppose that for some $x\ne y$, $p_5:= \P^{x,y}(
T^R_{0}< \infty) >0$. We will show that this assumption
leads to a contradiction. For $j\geq1$, let $S_j = \inf\{t\geq
0\dvtx R_t \leq2^{-j}\}$ and
\[
G_j = \bigl\{ \inf\{t \geq S_j\dvtx R_t =
\eps_1\} < \inf\{t \geq S_j\dvtx R_{t} = 0\}
\bigr\}.
\]
Fix any $j_0$ such that $0< 2^{-j_0} < R_0 \land\eps_1$.
If $T^R_{0}< \infty$, then $S_j < \infty$ for all $j \geq j_0$. It
follows from the strong Markov property applied at $S_j$ that
$\P^{x,y}(\{S_j < \infty\} \cap G_j) \geq p_5(1-p_4)$ for $j \geq
j_0$. Since
$\{S_{j+1} < \infty\} \cap G_{j+1} \subset\{S_j < \infty\}
\cap G_j$, we have $\P^{x,y} (\bigcap_{j\geq j_0} (\{S_j <
\infty\} \cap G_j) ) \geq p_5(1-p_4)>0$. If the event
$\bigcap_{j\geq j_0} (\{S_j < \infty\} \cap G_j)$ holds, then $R$
has a discontinuity at $T^R_{0}$. Since $R$ is continuous
a.s., we have a contradiction which proves that for any $x\ne
y$, $\P^{x,y}( T^R_{0}< \infty)=0$.

Now suppose that $p_6:=
\P(\lim_{t\to\infty} R_t =
0)>0$. If $\lim_{t\to\infty} R_t = 0$, then $S_j <
\infty$ for all $j\geq j_0$. We can argue as above to show that
\[
\P^{x,y} \biggl( \Bigl\{ \lim_{t\to\infty} R_t
= 0 \Bigr\} \cap \bigcap_{j\geq j_0} \bigl(
\{S_j < \infty\} \cap G_j\bigr) \biggr) \geq
p_6(1-p_4)>0.
\]
If the events
$\{T^R_{0}< \infty\}^c$ and
$\bigcap_{j\geq j_0} (\{S_j < \infty\} \cap G_j)$
hold, then $\limsup_{t\to\infty} R_t >0$. Hence, $\P ( \lim_{t\to
\infty} R_t = 0
\mbox{ and } \limsup_{t\to\infty} R_t >0  )>0$. We have a
contradiction which proves that for any $x\ne y$, $\P^{x,y}(
\lim_{t\to\infty} R_t = 0)=0$.
\end{pf*}

\begin{appendix}
\section*{Appendix}\label{appendix}

\begin{pf*}{Proof of Lemma \ref{lemj271}}
We have
%
%
\setcounter{equation}{0}
\begin{eqnarray}
\label{j151} &&\int_0^{2\pi} \log \bigl( \bigl(
\sin^2 \beta+ \cos^2 \beta\cos^2 \alpha
\bigr)^{1/2} \bigr) \,d \beta \nonumber\\
&&\qquad= \int_0^{2\pi}
\log \bigl( \bigl( \cos^2 \beta+ \sin^2 \beta
\cos^2 \alpha \bigr)^{1/2} \bigr) \,d \beta
\nonumber\\
& &\qquad= \int_0^\pi\int_1
^{\cos^2 \alpha} \frac{\sin^2 \beta}{ \cos^2 \beta+ u \sin^2 \beta} \,du \,d\beta
\nonumber
\\
&&\qquad= \int_1 ^{\cos^2 \alpha} \int_{-\infty}^\infty
\frac{1}{1-u} \biggl( \frac{1}{x^2 + u} - \frac{1}{x^2 + 1} \biggr) \,dx
\,du (x = \cot\beta)
\\
&&\qquad= \int_1 ^{\cos^2 \alpha} \frac{1}{\sqrt{u} (1-u)} \biggl[
\arctan \biggl(\frac{x}{\sqrt{u}} \biggr) - \sqrt{u} \arctan(x) \biggr]
_{-\infty}^\infty \,du
\nonumber
\\
&&\qquad = \pi\int_1 ^{\cos^2 \alpha} \frac{1}{\sqrt{u} (1+\sqrt{u})} \,du
\nonumber
\\
&&\qquad= 2 \pi\log \biggl( \frac{1}{2} + \frac{1}{2} |\cos\alpha|
\biggr),
\nonumber
\end{eqnarray}
which implies
\begin{eqnarray*}
&&\int_0^{2\pi} \int_0^\pi
\frac{1}{4\pi} \sin\alpha \log \bigl(\bigl(\sin^2 \beta+
\cos^2 \beta\cos^2 \alpha\bigr)^{1/2} \bigr) \,d
\alpha \,d\beta
\\
&&\qquad = \int_0^{\pi/2} \sin\alpha \log \bigl( (1 +
\cos\alpha)/2 \bigr) \,d\alpha
\\
&&\qquad = \int_{1/2}^1 2 \log y \,dy \bigl(y=(1+\cos
\alpha)/2\bigr)
\\
&&\qquad = \log2 -1.
\end{eqnarray*}
This proves \eqref{eq061}.

We use \eqref{j151} again to see that
\begin{eqnarray*}
&&\int_0^{2\pi} \int_0^\pi
\frac{1}{16\pi} \frac{\sin\alpha} { \sin^3 (\alpha/2)} \log \bigl(\bigl(\sin^2 \beta+
\cos^2 \beta\cos^2 \alpha\bigr)^{1/2} \bigr) \,d
\alpha \,d\beta
\\
&&\qquad = \frac14 \int_0^\pi \frac{\cos(\alpha/2)}{ \sin(\alpha/2) ^2}
\log \biggl( \frac12 + \frac12 |\cos\alpha| \biggr) \,d\alpha
\\
&&\qquad = \int_0^{\pi/4} \frac{ \cos u}{ \sin^2 u} \log(\cos
u) \,du + \int_{\pi/4}^{\pi/2} \frac{ \cos u}{ \sin^2 u} \log(
\sin u) \,du (u = \alpha/2)
\\
&&\qquad = - \biggl[ \frac{\log(\cos u)}{ \sin u} + \log \biggl( \frac{ 1 + \sin u}{ \cos u} \biggr)
\biggr] _0 ^{\pi/4} - \biggl[ \frac{\log(\sin u)}{ \sin u} + \frac1{
\sin u} \biggr] _{\pi/4} ^ {\pi/2}
\\
&&\qquad = \sqrt{2} - 1 - \log(1+\sqrt2).
\end{eqnarray*}
This proves \eqref{eqj276}.
\end{pf*}
\end{appendix}

\section*{Acknowledgment}
We are grateful to the referee for many suggestions for improvement, in
particular,
for a short proof of Lemma \ref{lemj271}.

%



\printaddresses

\end{document}